\newcommand\prob{\mathbb{P}}
\newcommand\indic{\mathds{1}}
\newcommand\rSigma{\textbf{\textup{r}}_{\scriptscriptstyle\bSigma}}
\newcommand\rtSigma{\textbf{\textup{r}}_{\scriptscriptstyle\widetilde\bSigma}}
\newcommand*{\Xbar}{}%
\DeclareRobustCommand*{\Xbar}{%
  \mathpalette\@Xbar{}%
}
\newcommand*{\@Xbar}[2]{%
  % #1: math style
  % #2: unused (empty)
  \sbox0{$#1\mathbf{X}\m@th$}%
  \sbox2{$#1\bs X\m@th$}%
  \rlap{%
    \hbox to\wd2{%
      \hfill
      $\overline{%
        \vrule width 0pt height\ht0 %
        \kern\wd0 %
      }$%
    }%
  }%
  \copy2 %
}
\newcommand*{\Zbar}{}%
\DeclareRobustCommand*{\Zbar}{%
  \mathpalette\@Zbar{}%
}
\newcommand*{\@Zbar}[2]{%
  % #1: math style
  % #2: unused (empty)
  \sbox0{$#1\mathbf{Z}\m@th$}%
  \sbox2{$#1\bs Z\m@th$}%
  \rlap{%
    \hbox to\wd2{%
      \hfill
      $\overline{%
        \vrule width 0pt height\ht0 %
        \kern\wd0 %
      }$%
    }%
  }%
  \copy2 %
}
\newcommand\calS{\mathcal{S}}
\newcommand\calO{\mathcal{O}}
\newcommand\calI{\mathcal{I}}
\newcommand\bs{\boldsymbol}
\newcommand\bmu{\boldsymbol\mu}
\newcommand\bxi{\boldsymbol\xi}
\newcommand\bzeta{\boldsymbol\zeta}
\newcommand\bSigma{\boldsymbol\Sigma}
\newcommand\btSigma{\widetilde{\boldsymbol\Sigma}}
\newcommand\bfA{\mathbf A}
\newcommand\bfB{\mathbf B}
\newcommand\bfP{\mathbf P}
\newcommand\bfU{\mathbf U}
\newcommand\bfV{\mathbf V}
\newcommand\bfM{\mathbf M}
\newcommand\be{\boldsymbol e}
\newcommand\bZ{\boldsymbol Z}
\newcommand\bV{\boldsymbol V}
\newcommand\bX{\boldsymbol X}
\newcommand\bY{\boldsymbol Y}
\newcommand\bv{\boldsymbol v}
\newcommand\RR{\mathbb R}
\newcommand\bfI{\mathbf I}
\newcommand\barx{\Xbar}
\newcommand\barz{\Zbar}
\newcommand\barxi{\overline\bxi}
\def\tr{\mathop{\textup{Tr}}}
\def\hat{\widehat}
\def\bmuGM{\hat\bmu^{\sf GM}}
\def\bmuGMV{\hat\bmu^{{\sf GM}}_V}
\def\bmuSDR{\hat\bmu{}^{{\sf SDR}}}
\def\Proj{{\tt P}}
\def\ProjV{{\tt P}_V}
\def\Err{\textup{Err}}
\def\sfC{\mathsf C}
\newtheorem{theorem}{Theorem}
\newtheorem{definition}{Definition}
\newtheorem{proposition}{Proposition}
\newtheorem{lemma}{Lemma}
\newcommand\med{\hat\bmu^{{\sf GM}}}
\begin{document}

\begin{frontmatter}

\title{Nearly minimax robust estimator of the mean vector by
iterative spectral dimension reduction}
\runtitle{Robust mean estimation by SDR}

\begin{aug}

\author
    {
    \fnms{Amir-Hossein}
    \snm{Bateni}\ead[label=e1]{Amirhossein.bateni@ensae.fr}},
\author
    {
    \fnms{Arshak}
    \snm{Minasyan}\ead[label=e2]{arshak.minasyan@ensae.fr}
    }
\and
\author
    {
    \fnms{Arnak S.}
    \snm{Dalalyan}
    \ead[label=e3]{arnak.dalalyan@ensae.fr}
    }
\address{
    CEREMADE, Université Paris Dauphine - PSL\\
    Place du Maréchal de Lattre de Tassigny,
    75775 Paris Cedex 16\\
    \printead{e1}
    }
\address{
    CREST, ENSAE, IP Paris\\
    5 avenue Henry Le Chatelier,
    91764 Palaiseau\\
    \printead{e2,e3}
    }

\runauthor{A. Bateni, A. Minasyan, A. Dalalyan}

\end{aug}

\begin{abstract}
We study the problem of robust estimation of the mean
vector of a sub-Gaussian distribution. We introduce an
estimator based on spectral dimension reduction (SDR)
and establish a finite sample upper bound on its error
that is minimax-optimal up to a logarithmic
factor. Furthermore, we prove that the breakdown point
of the SDR estimator is equal to $1/2$, the highest
possible value of the breakdown point. In addition, the
SDR estimator is equivariant by similarity transforms
and has low computational complexity. More precisely,
in the case of $n$ vectors of dimension $p$---at most
$\varepsilon n$ out of which are adversarially
corrupted---the SDR estimator has a squared error of order
$(\nicefrac{\rSigma}{n} + \varepsilon^2\log(1/\varepsilon)
){\log p}$ and a running time of order $p^3 + n p^2$.
Here, $\rSigma\le p$ is the effective rank of the
covariance matrix of the reference distribution.
Another advantage of the SDR estimator is that it does
not require knowledge of the contamination rate
and does not involve sample splitting. We also investigate
extensions of the proposed algorithm and of the obtained
results in the case of (partially) unknown covariance matrix.
\end{abstract}

\begin{keyword}[class=MSC]
\kwd[Primary ]{62H12}
\kwd[; secondary ]{62F35}
\end{keyword}

\begin{keyword}
\kwd{breakdown point}
\kwd{minimax optimality}
\kwd{spectral method}
\kwd{robustness}
\end{keyword}
% \tableofcontents

\end{frontmatter}

\section{Introduction}

% Robust estimation of a finite-dimensional parameter is a classical
% problem in statistics, which has been studied by many authors.
Robust estimation of a finite-dimensional parameter is a classical
problem in statistics. The broad goal of robust estimation is to
design statistical procedures that are not very sensitive
to small changes in data or to small departures from the
modeling assumptions. A typical example, extensively
studied in the literature, and considered in the present
work, is when the data set contains outliers.

The literature on robustness to outliers in parametric
estimation is very rich; it would be impossible to review
here all the important contributions. For an in-depth
exposition of by now classical results and approaches, such
as the influence function, the breakdown point and the
efficiency, we refer to the books \citep{rousseeuw2011robust,
huber2011robust,maronna2006robust}. In their vast majority,
these well-established approaches treated the dimension of
the parameter as a fixed and small constant. This simple
setting was convenient for mathematical
analysis and for computational purposes,
but somewhat disconnected from many practical situations.
Furthermore, it was hiding some fascinating phenomena that
emerge only when the dimension is considered as
a parameter that might be large, in the same way as the
sample size.

\begin{figure}
    \centering
    \includegraphics[width=0.3\textwidth]{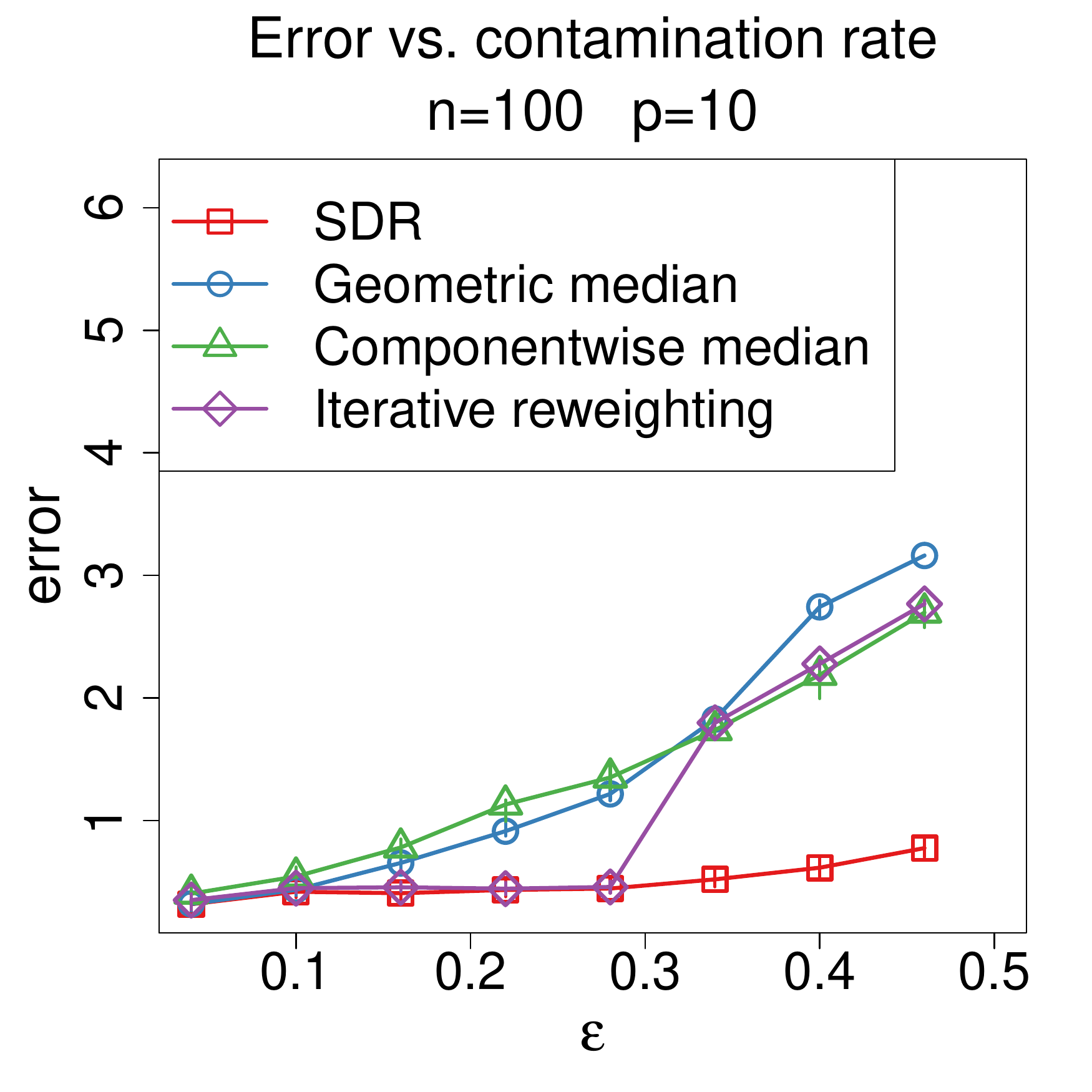}
    \includegraphics[width=0.3\textwidth]{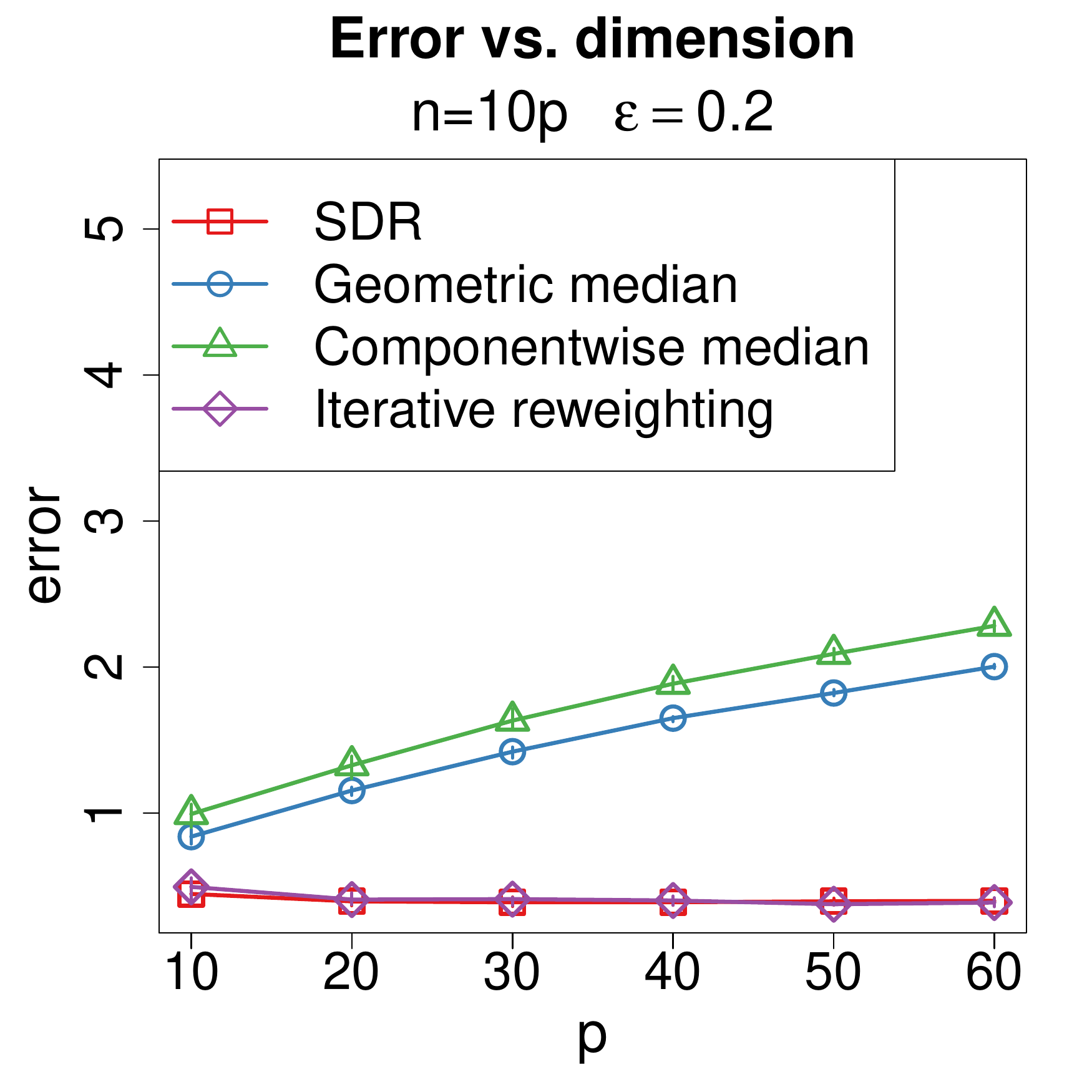}
    \includegraphics[width=0.3\textwidth]{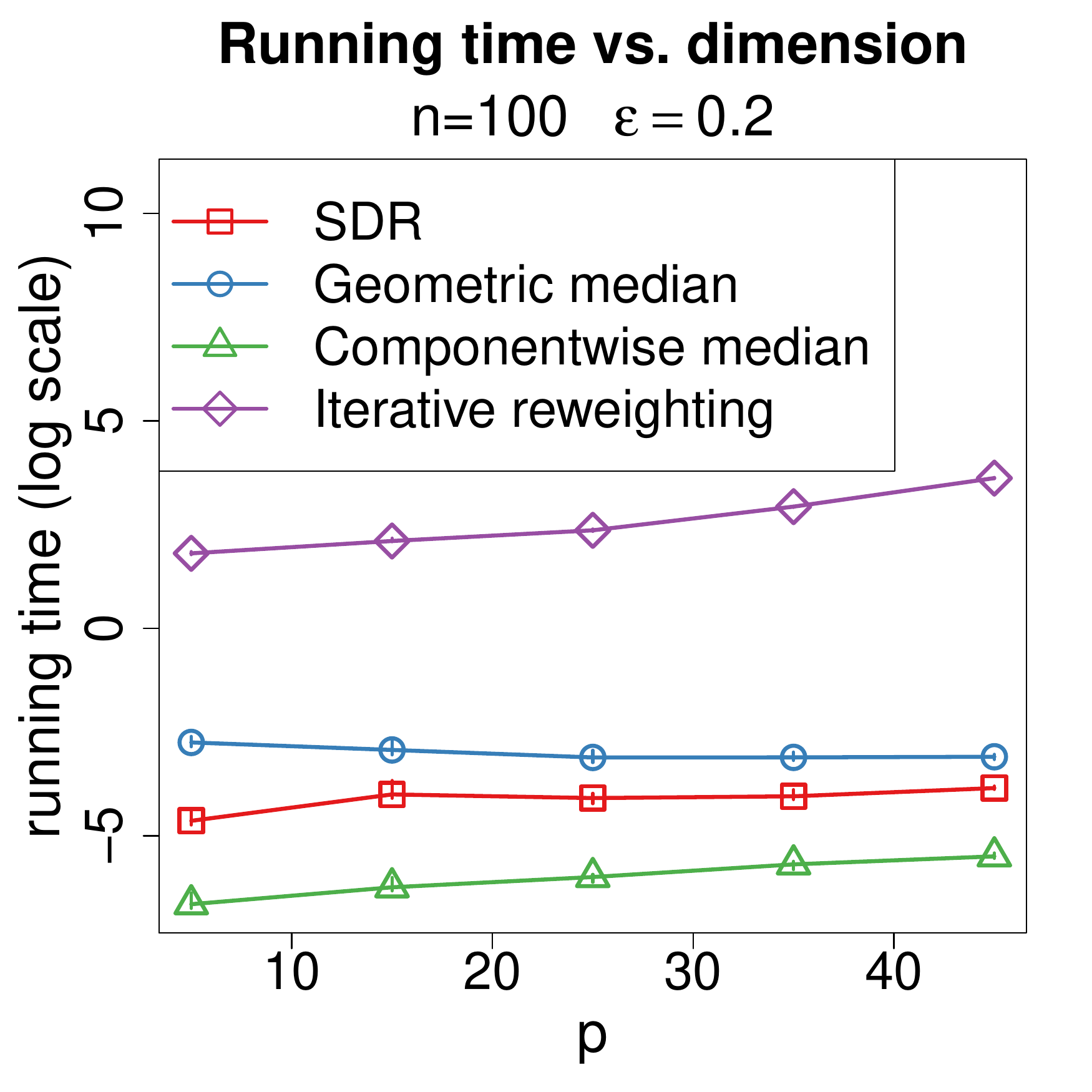}
    \caption{\small Plots that help to visually compare four
    robust estimators: SDR (our estimator), geometric
    median (GM) given by \eqref{GM}, componentwise
    median (CM), iteratively reweighted mean (IRM) of
    \citep{dalalyan2020allinone}. The
    first two plots show that SDR is as accurate as IRM
    for small $\epsilon$, with SDR outperforming IRM
    for $\varepsilon$ close to $1/2$. IRM and SDR are
    naturally much more accurate than GM and CM. The last
    plot shows that the running time of SDR is comparable
    to that of GM and is much smaller than that of IRM.
    More details on these experiments are provided in
    \Cref{sec:5}. }
    \label{fig:intro}
\end{figure}

More recently, \citep{chen2018} considered the problem of
estimating the mean and the covariance matrix of a Gaussian
distribution in the high-dimensional setting. The authors
namely uncovered a new phenomenon: under the Huber
contamination, the componentwise median is not minimax-rate
optimal whereas the Tukey median is. More precisely, if a
$p$-dimensional mean vector is to be estimated from $n$
independent vectors drawn from the mixture distribution
$(1-\varepsilon)\mathcal N_p(\bmu,\bSigma) +\varepsilon
\mathbf Q$ (where $\varepsilon\in(0,1/2)$ is the rate of
contamination and $\mathbf Q$ is the unknown distribution
of outliers), then the mean squared error of the
componentwise median is of order $p/n + p\varepsilon^2$
while that of Tukey's median is of order $p/n +
\varepsilon^2$. This extra factor $p$ in front of
$\varepsilon^2$ has been proven in \citep{LaiRV16}
to be present in the error of another widely used
robust estimator of the mean, the  geometric median
\citep{Minsker}. Thus, as long as only statistical
properties of the estimators are considered, Tukey's
median is thus superior to its competitors, the
componentwise and the geometric medians.  However,
the componentwise and the geometric medians are better
than the Tukey's  median in terms of the breakdown
point: their breakdown point is equal to $1/2$
\citep{Lopuhaa} whereas that of Tukey's median is
$1/3$ \citep{DonGasko}. This is one of the appealing
phenomena taking place in the high dimensional setting.

Another specificity of the high dimensional setting
uncovered by \citep{chen2018} was the lack of computational
tractability of the estimators that are statistically
optimal. Indeed, Tukey's median is computationally
intractable, but minimax-rate optimal, whereas the
componentwise and the geometric medians are
computationally tractable but statistically suboptimal.
This observation led to the development of a number of
computationally tractable estimators having an error
with a better dependence on dimension than that of
Tukey's median \citep{DiakonikolasKKL16,LaiRV16,DiakonikolasKK017, DiakonikolasKK018,DongH019,dalalyan2020allinone}.
In particular, most estimators introduced in these papers
allow to conciliate computational tractability (\textit{i.e.}, are computable in time polynomial in $n,p,1/\varepsilon$) and
statistical optimality up to logarithmic factors.

The goal of this paper is  to make a step forward by
designing an estimator which is not only nearly rate
optimal and computationally tractable, but also has a
breakdown point equal to $1/2$, which is the highest
possible value of the breakdown point. To construct the
estimator, termed iterative spectral dimension reduction
or SDR, we combine and suitably adapt ideas from
\citep{LaiRV16} and \citep{DiakonikolasKK017}. The
main underlying observation is that if we remove some
clear outliers and restrict our attention to the subspace
spanned by the eigenvectors of the sample covariance matrix
corresponding to small eigenvalues, then the sample mean
of the projected data points is a rate-optimal estimator.
This allows us to iteratively reduce the dimension and
eventually to estimate the remaining low-dimensional
component of the mean by a standard robust estimator such
as the componentwise median or the trimmed mean, see
\Cref{alg:1}.

The main contributions of this paper are methodological and
theoretical. The SDR estimator, thoroughly defined in
\Cref{sec:2}, is a fast and accurate method for robustly
estimating the mean of a set of points. It depends on one
tuning parameter, the threshold used for identifying and
removing clear outliers, and on the dimension reduction
regime. Our theoretical considerations provide some
recommendations for their choices and our numerical experiments
reported in \Cref{sec:5} confirm the relevance of these choices.
Importantly, the SDR estimator does not require as input the
rate of contamination $\varepsilon$ but only an upper bound
on $\varepsilon$. As for theoretical contributions of this
paper, we state in \Cref{sec:3}
an upper bound on the error of the SDR estimator,
showing that it is nearly minimax-rate optimal and has a
breakdown point equal to $1/2$. This is done in the general
case of a sub-Gaussian distribution with heterogeneous
covariance matrix contaminated by adversarial noise. In
\Cref{sec:4}, we further investigate the error of the SDR
estimator in the case where only an approximation to the
covariance matrix is available.

\begin{table}[]
    \centering
    \begin{tabular}{r|ccccc}
    \toprule
         & Comput. & Breakdown & knowledge
         & Squared  & knowledge \\
         & tractable & point & of $\varepsilon$
         & error rate & of $\bSigma$ or $\bSigma \propto
         \mathbf I$\\
         \toprule
         &\multicolumn{5}{c}{Gaussian distribution}\\
         \midrule
    Comp./Geom.\ Median & \color{blue}yes  & 0.5 & \color{blue}
    no
         & ${\rSigma}/{n} + \varepsilon^2 p$ & \color{blue}no  \\
    Tukey's Median & \color{red} no  & 0.33 & \color{blue} no
         &  ${\rSigma}/{n} + \varepsilon^2$ & \color{blue} no  \\
    Agnostic Mean
        & \color{blue} yes & $-$ & \color{red}yes & $(p/n)\log^3p
        + \varepsilon^2 \log p$ & \color{red} yes \\
        \midrule
        &\multicolumn{5}{c}{sub-Gaussian distribution}\\
        \midrule
    Iter.\ Reweighted Mean & \color{blue}yes & 0.28
    & \color{red} yes &  $({\rSigma}/{n}) + \varepsilon^2
    \log(1/\varepsilon)$ & \color{red} yes \\
    Iterative Filtering
        & \color{blue}yes & $-$ & \color{red} yes & $(p/n)\log^ap
        + \varepsilon^2\log(1/\varepsilon)$ & \color{red} yes \\
    SDR (this paper)
        & \color{blue}yes & 0.5 & \color{blue} no & $(\rSigma/n + \varepsilon^2\log(1/\varepsilon))\log p$ & \color{red} yes \\
    \bottomrule
    \end{tabular}
    \caption{Properties of various robust estimators. Agnostic
    mean, iteratively reweighted mean and iterative filtering
    are the estimators studied in \citep{LaiRV16},
    \citep{dalalyan2020allinone} and \citep{DiakonikolasKK017},
    respectively. The error rates reported for Tukey's median,
    componentwise median, geometric median and the agnostic
    mean have been proved for non-adversarial contamination.
    The squared error rate is provided in the case of a
    covariance matrix satisfying $\|\bSigma\|_{\rm op}=1$.}
    \label{tab:intro}
\end{table}

The papers that are the closest to the  present one are
\citep{LaiRV16}, \citep{DiakonikolasKK017} and
\citep{dalalyan2020allinone}. The spectral dimension
reduction scheme was proposed by \citep{LaiRV16} along with
an initial sample splitting step ensuring the independence of
the estimators over different subspaces.
In the case of spherical Gaussian distribution contaminated
by non-adversarial outliers, the paper states that the
proposed estimator has a squared error at most of order
$p\log^2 p\log(p/\varepsilon)/n + \varepsilon^2\log p $.
Compared to this, our results are valid in the more
general setting of sub-Gaussian distribution, with arbitrary
covariance matrix and adversarial contamination. In addition,
our estimator does not rely on sample splitting and, therefore,
has a risk with a better dependence on $p$. As compared to
the filtering method of  \citep{DiakonikolasKK017},
our estimator has the advantage of being independent of
$\varepsilon$ and our error bound is valid for every covariance
matrix and every confidence level. On the down side, our
error bound has an extra factor $\log p$ in front of
$\varepsilon^2$. We believe that this factor is an artifact
of the proof, but we were unable to remove it. Finally,
compared to the iteratively reweighted mean
\citep{dalalyan2020allinone}, the SDR estimator studied
in the present paper has a higher breakdown point, does not
require the knowledge of $\varepsilon$ and is much faster to
compute. The advantages and shortcomings of these estimators
are summarized in \Cref{tab:intro} and \Cref{fig:intro}.

%\subsection*{Notation}
\textit{Notation.}\ For any pair of integers $k$ and $d$
such that $1\le k\le d$, we denote by $\mathscr V_k^d$ the
set of all $k$-dimensional linear subspaces $V$ of
$\mathbb{R}^d$. For $V\in \mathscr V_k^d$, we write
$k=\dim(V)$ and denote by $\ProjV$ the orthogonal projection
matrix onto $V$. $\mathbb{S}^{d-1}$ stands for the unit
sphere in $\mathbb{R}^d$. For a $d\times d$ symmetric matrix
$\bfM$, we denote by $\lambda_1(\bfM),\ldots,\lambda_d(\bfM)$
its eigenvalues sorted in increasing order, and use the
notation $\lambda_{\min}(\bfM) = \lambda_1(\bfM)$,
$\lambda_{\max}(\bfM) = \lambda_d(\bfM)$, $\|\bfM\|_{\rm op}
= \max(|\lambda_{\min}(\bfM)|, |\lambda_{\max}(\bfM)|)$ and
$\tr(\bfM) = (\lambda_1 + \ldots + \lambda_d)(\bfM)$. For
any integer $n>0$, we set $[n] = \{1,\ldots,n\}$. We will
denote by $\calO\subset [n]$ the subscripts of the outliers
and by $\calI=[n]\setminus \calO$ the subscripts of inliers.
We also use notation $\log_+(x) = \max\{0, \log(x)\}$.

%\newpage

\begin{center}
%\begin{minipage}[b]{0.7\linewidth}
    \begin{algorithm}[h]
    \caption{\textsf{SDR}($\bX_1,\dots,\bX_n;\bSigma,t$)}
    \label{alg:1}
    \begin{algorithmic}[1]
        \STATE\textbf{let} $p$ the dimension of $\bX_1$
        \STATE\textbf{let} $\med$ be the geometric median
        of $\bX_1,\dots,\bX_n$
        \STATE\textbf{let} $\calS \gets  \{i:\|\bX_i-\med\|
        \leq t\sqrt{p}\}$
        \STATE\textbf{let} $\barx_{\calS}$ be the sample
        mean of the filtered sample $\{\bX_i:i\in\calS\}$
        \STATE\textbf{let} $\hat{\bSigma}_{\calS}$ be the
        covariance matrix of the filtered sample
        $\{\bX_i:i\in\calS\}$
        \IF{$p > 1 $ }
            \STATE\textbf{let} $V$ be the span of the top
            $\lceil p/e\rceil$ principal components of
            $\hat{\bSigma}_{\calS} -\bSigma$
            \STATE\textbf{let} $\Proj_{V}$ be the
            orth.~projection onto $V$
            \STATE\textbf{let} $\Proj_{V^\perp}$ be the
            orth.~projection onto the orth.~complement of $V$
            \STATE\textbf{let} $\hat{\bmu} \gets  \Proj_{V^\perp}
            \barx_{\calS} + \textsf{SDR} (\Proj_{V}\bX_1,\ldots,
            \Proj_{V}\bX_n;\Proj_{V}\bSigma\,\Proj_{V}, t)$
        \ELSE
            \STATE\textbf{let}  $\hat\bmu \gets  \med$
        \ENDIF
        \RETURN $\hat{\bmu}$
    \end{algorithmic}
    \end{algorithm}
%\end{minipage}
\end{center}

\def\sfP{\mathsf P}
\section{Adversarially corrupted sub-Gaussian model
and spectral dimension reduction}\label{sec:2}

We assume that a set $\bX_1, \ldots,\bX_n$ of $n$ data
points drawn from a distribution $\sfP_n$ is given.
This set is assumed to contain at least $n-[n\varepsilon]$
inliers, the remaining points being outliers. All the
points lie in the $p$-dimensional Euclidean space and
the inliers are independently drawn from a reference
distribution, assumed to be sub-Gaussian with mean
$\bmu^*\in \mathbb R^p$ and covariance matrix $\bSigma$.
To state the assumptions imposed on the observations in
a more precise way, let us recall that the random vector
$\bzeta$ is said to be sub-Gaussian  with zero mean
and identity covariance matrix, if $\mathbb E[\bzeta] =
0 $, $\mathbb E[\bzeta\bzeta^\top] = \bfI_p$ and for
some $\mathfrak s > 0$, we  have
\begin{align}
    \mathbb{E}\big[e^{\bv^\top \bzeta}\big] \le
    \exp\big\{{\mathfrak s}\|\bv \|^2/2\big\},
    \quad \forall \bv \in \RR^p.
\end{align}
The parameter $\mathfrak s$ is commonly called the
variance proxy and the writing  $\bzeta \sim
{\textup{SG}}_p(\mathfrak s)$ is used.

\begin{definition} We say that the data generating
distribution $\sfP_n$ is an adversarially corrupted
sub-Gaussian distribution with mean $\bmu^*$, covariance
matrix $\bSigma$, variance proxy $\mathfrak s$ and
contamination rate $\varepsilon$, if there is a probability
space on which we can define a sequence of random vectors
$(\bX_1,\bY_1),\ldots,(\bX_n,\bY_n)$ such that
\vspace{-10pt}
\begin{enumerate}
	\item $\bY_1,\ldots,\bY_n$ are independent and  $ \bSigma^{-1/2}(\bY_i-\bmu^*) \sim
	\textup{SG}_p(\mathfrak s)$ for every $i\in[n]$.
	\item the cardinality of $\mathcal O = \{i\in[n] :
	\bY_i\neq\bX_i\}$ is at most equal to $n\varepsilon$.
	\item the distribution of $(\bX_1,\ldots,\bX_n)$ is $\sfP_n$.
\end{enumerate}
\vspace{-10pt}
We write then\footnote{SGAC stands for sub-Gaussian with
adversarial contamination.} $\sfP_n\in \textup{SGAC}(
\bmu^*,\bSigma,\mathfrak s,\varepsilon)$. In the particular
case where all $\bY_i$ are Gaussian, we will write
 $\sfP_n\in\textup{GAC}(\bmu^*,\bSigma,\varepsilon)$.
\end{definition}

For an overview of various kind of contamination
models we refer the interested reader to \citep{bateni2020}.
The adversarial contamination considered throughout this
work is perhaps the most general one considered in the
literature as the elements of the set $\calO$---called
outliers---may be chosen using $\bmu^*,\bSigma,\varepsilon$
but also $\bY_1,\dots,\bY_n$ by an omniscient adversary.
Note that in this setting, even the set $\calO$ is random
and depends on $\bY_1,\ldots,\bY_n$. Therefore, the inliers
$\{\bX_i:i\in\calI\}$ cannot be considered as independent
random variables. The problem studied in this work consists
in estimating the mean $\bmu^*$ of the reference
distribution from the adversarially corrupted observations
$\bX_1, \ldots,\bX_n$.

The estimator we analyze in this work is termed iterative
spectral dimension reduction and denoted by $\bmuSDR$. It
is closely related to the agnostic mean \citep{LaiRV16} and
to iterative filtering \citep{DiakonikolasKK017}
estimators. We will prove that SDR enjoys most of
desired properties in the setting of robust estimation
of the sub-Gaussian mean.

The parameters given as input to the iterative spectral
dimension reduction algorithm are a strictly decreasing
sequence of positive integers $p_0,\ldots,p_L$ such that
$p_0 = p$ and a positive threshold $t > 0 $. We recall
that the geometric median is defined by
\begin{align}\label{GM}
    \bmuGM\in\text{arg}\min_{\bmu\in\mathbb R^p}
    \sum_{i=1}^n \|\bX_i-\bmu\|_2.
\end{align}
The algorithm for computing the SDR estimator reads as
follows.
\begin{enumerate}\label{alg:3}
    \item Start by setting $\bfV_0 = \bfI_p$.
    \item For $\ell = 0,\ldots L-1$ do

    \begin{enumerate}
        %\item Define the projected data points $\bX_i^{(\ell)}
        %= \bfV_\ell^\top \bX_i\in\mathbb R^{p_\ell}$
        %for $i=1,\ldots,n$.

        \item Define $\bar\bmu^{(\ell)}\in\mathbb R^{p_\ell}$
        as the geometric median of  $\{\bfV_\ell^\top\bX_i:
        i\in[n]\}$.

        \item Define the set $\calS^{(\ell)} = \big\{i\in[n]:
        \|\bfV_\ell^\top\bX_i - \bar\bmu^{(\ell)}\|_2 \leq
        t\sqrt{p_\ell}\big\}$ of filtered data points.

        \item Let $\barx^{(\ell)}$ and $\hat\bSigma{}^{(\ell)}$
        be the mean vector and the covariance matrix of the
        filtered sample $\{\bX_i : i\in\calS^{(\ell)}\}$,
        that is
        \begin{align}
            \barx^{(\ell)} = \frac1{|\calS^{(\ell)}|}
            \sum_{i\in\calS^{(\ell)}}\bX_i,\qquad
            \hat\bSigma{}^{(\ell)} = \frac1{|\calS^{(\ell)}|}
            \sum_{i\in\calS^{(\ell)}}
            (\bX_i - \barx)^{\otimes 2}.
        \end{align}

    \item Set $\hat\bmu^{(\ell)} = \bfV_\ell \bfU_\ell^\top
     \bfU_\ell\bfV_\ell^\top\barx^{(\ell)}$, where $\bfU_\ell$
    is a $(p_\ell - {p_{\ell+1}})\times p_\ell$ orthogonal
    matrix the rows of which are the eigenvectors of
    $\bfV_\ell^\top(\hat\bSigma{}^{(\ell)} - \bSigma)\bfV_\ell$
        corresponding to its $(p_\ell - {p_{\ell+1}})$ smallest
    eigenvalues.

    %\item Set $\hat\bmu^{(\ell)} = \bfV_\ell \bfU_\ell^\top
    %\bfU_\ell\barx^{(\ell)}$, where $\bfU_\ell$ is an orthogonal
    %matrix obtained by concatenating the $(p_\ell - {p_{\ell+1}})$
    %eigenvectors of $\hat\bSigma{}^{(\ell)} - \bSigma$ corresponding
    %to its smallest eigenvalues.

    \item Set $\bfV_{\ell+1} = \bfV_\ell(\bfU_\ell^{\perp}
    )^\top\in\mathbb R^{p\times p_{\ell+1}}$, where
    $\bfU_\ell^{\perp}$ is a $p_{\ell+1} \times p_\ell$
    orthogonal matrix orthogonal to $\bfU_\ell$, that is
    $\bfU_\ell^{\perp}\bfU_\ell^\top =\mathbf 0$.
\end{enumerate}

%\item Define the projected data points $\bX_i^{(L)} =
%    \bfV_L^\top \bX_i\in\mathbb R^{p_L}$ for $i=1,\ldots,n$.

\item Define $\bar\bmu^{(L)}$ as the geometric
    median of  $\bfV_L^\top\bX_i$ for $i=1,\ldots,n$ and set
    $\calS^{(L)} = \big\{i\in[n]: \|\bfV_L^\top\bX_i -
    \bar\bmu^{(L)}\|_2 \leq t \sqrt{p_L}\big\}$.
\item Define $\hat\bmu^{(L)} = \bfV_L\bfV_L^\top\,\barx^{(L)}$,
the average of filtered and projected vectors.
\item Return $\bmuSDR = \hat\bmu^{(0)} + \hat\bmu^{(1)} +
    \ldots + \hat\bmu^{(L)}$.
\end{enumerate}

The steps described above can be summarised as follows.
At each iteration $\ell<L$, we start by determining a
filtered subsample $\calS^{(\ell)}$ and a
``nearly-outlier-orthogonal'' subspace $\mathscr U_\ell =
{\rm Im}(\bfV_\ell\bfU_\ell^\top)$ of $\mathbb R^p$ of
dimension $p_{\ell} - p_{\ell+1}$.  We define the projection
of $\bmuSDR$ onto $\mathscr U_\ell$ as the sample mean of
the filtered and projected subsample, and we move to the
next step for determining the projection of $\bmuSDR$ onto
the remaining part of the space. At the last iteration
$L$, when the dimension is well reduced, the projection of
$\bmuSDR$ onto the subspace $\mathscr U_L$ is defined as
the average of the filtered subsample projected onto
$\mathscr U_L$. The subspaces $\mathscr U_\ell$ are two-by-two
orthogonal and span the whole space $\mathbb R^p$.
Each subspace is determined from the spectral decomposition
of the covariance matrix of the data points projected
onto $(\mathscr U_0\oplus \ldots\oplus\mathscr U_{\ell-1}
)^\perp$, after removing the points lying at an abnormally
large distance from the geometric median.

\subsection{Choice of the dimension reduction regime}
The analysis of the error of the SDR estimator conducted
in this work leads to an upper bound in which the sequence
$(p_0,\ldots,p_L)$ is involved only through the expression
\begin{align}
    F(p_0,\ldots,p_L)
        = \sum_{\ell=1}^{L}  \frac{p_{\ell-1}}{p_\ell}.
\end{align}
Therefore, an appealing way of choosing this sequence is
to minimize the function $F$ under the constraint that the
sequence is decreasing and $p_0=p$ and $p_L=1$. It follows
from the inequality between the arithmetic and geometric
means that $F(p_0,\ldots,p_L)\ge L p^{1/L}$. Furthermore,
the equality is achieved\footnote{We relax here the assumption
that all the entries $p_\ell$ are integers.} in the case when
all the terms in the definition of $F$ are equal,
\textit{i.e.}, when for some $c>0$ we have $p_{\ell-1} = c
p_\ell$ for every $\ell \in[L]$. Since $p_0 = p$ and $p_L=1$,
this yields $c = p^{1/L}$ or, equivalently, $L = \log p/\log c$.
Using these relations, we find that the function $F$ is lower
bounded by $Lc = (c/\log c)\log p$. The last step is to find
the minimum of the function $c\mapsto c/\log c$ over the
interval $(1,\infty)$. One easily checks that this function has
a unique minimum at $c = e$. All these considerations
advocate for using the dimension reduction regime defined by
\begin{align}\label{def:p}
    p_0 = p,\quad p_{\ell} = \lfloor p_{\ell-1}/e\rfloor +1,
    \ \ell \in[L] ,\quad p_L = 1,
\end{align}
where $\lfloor x\rfloor$ is the largest integer strictly
smaller than $x$. Such a definition of $(p_\ell)$ ensures
that $p_{\ell-1}/p_\ell\le e$ and that\footnote{To check
this inequality, one can use the fact that $3\le p_{L-2}\le
pe^{2-L} + e/(e-1)$. This implies $L \le 2\log p$ for
$p\ge 6$. For smaller values of $p$, the inequality can
be checked by direct computations.} $L \le 2\log p$. In the
rest of the paper, we assume that the sequence $(p_\ell)$ is
chosen as in \eqref{def:p}.

\subsection{Choice of the threshold}
The SDR procedure has one important tuning parameter:
the threshold $t$ used to discard clearly outlying
data points. Let us introduce the auxiliary notation
\begin{align}\label{bar_rn}
    \bar{\sf r}_n = \frac{\sqrt{\rSigma} +
    \sqrt{2 \log( 2/\delta)}}{\sqrt{n}},\qquad
    \text{and}\qquad
    \tau = \frac14\bigwedge \frac{\bar{\sf r}_n}{\sqrt{
    \log(2/\bar{\sf r}_n)}}.
\end{align}
Note that $\bar{\sf r}_n$ is essentially the quantile,
up to a universal constant factor, of order $1-\delta$
of the distribution of $\|\bar\bY_n-\bmu^*\|_2$ where
$\bY_i$'s are independently drawn from $\mathcal N_p
(\bmu^*,\bSigma)$ with $\|\bSigma\|_{\rm op}=1$. Our
theoretical results advocate for using the value
$t = t_1 + t_2$, where
\begin{align}
    t_1 = \frac{2(1 + \bar{\sf r}_n)}{1-2\varepsilon^*},
    \qquad
    t_2 = 1 + \frac{\bar{\sf r}_n}{\sqrt{\tau}} +
    \sqrt{2 + \log(2/{\tau})},
\end{align}
where $\varepsilon^*<1/2$ is the largest value of the
contamination rate that the algorithm may handle.

Let $\bxi_1,\ldots,\bxi_n$ be independent Gaussian with zero mean and covariance $\bSigma$. The expression of $t_1$ is obtained as an upper
bound on the quantile of order $1-\delta/2$ of the
distribution of the random variable
\begin{align}
    T_1 = \sup_{V}\frac{2}{n(1-2\varepsilon)\dim(V)}
    \sum_{i=1}^n\|\ProjV\bxi_i\|_2,
\end{align}
see \Cref{lem:geommed} and its proof for further details.
Similarly, $t_2$ is defined so that the
event
\begin{align}
	\sup_{V} \sum_{i=1}^n \indic\big( \|\ProjV\bxi_i\|_2^2
	> t_2^2\dim(V) \big) \le n\tau
\end{align}
has a probability at lest $1-\delta/2$. The related
computations are deferred to \Cref{ssec:7.2}. Although we
tried to get sharp values for these thresholds $t_1$ and
$t_2$, it is certainly possible to improve these values
either by better mathematical arguments or by empirical
considerations. Of course, smaller values of the thresholds
$t_1$ and $t_2$ satisfying aforementioned conditions lead
to an SDR estimator having smaller error.

\section{Assessing the error of the SDR estimator}
\label{sec:3}
The iterative spectral dimension reduction estimator
defined in previous sections has some desirable properties
of a robust estimator that are easy to check. In particular,
it is clearly equivariant by translation, orthogonal linear
transform and global scaling. Furthermore, the breakdown
point of the estimator is equal to that of the geometric
median, that is to $1/2$. This means that even if almost
the half of data points are chosen to be infinitely large,
the estimator will not ``break down'' in the sense of
becoming infinitely large. However, the fact that the
estimated value does not become infinitely large, it might
be not very close to the true mean. The next theorem
shows that this is not the case and that the error of
the SDR estimator has a nearly rate-optimal behavior
even when the contamination rate is close to $1/2$. The
adverb ``nearly'' is used here to reflect the presence of
the $\sqrt{\log p}$ factor in the error bound, which is
not present in the minimax rate.

\begin{theorem}\label{thm:1}
Let $\varepsilon^* \in (0, 1/2)$, and $\delta \in (0, 1/2)$.
Define $\bar{\sf r}_n$ and $\tau$ as in \eqref{bar_rn}.
For every $\varepsilon \le \varepsilon^*$, let $\bmuSDR$
be the estimator returned by Algorithm~\ref{alg:3}
with
\begin{align}
    t=\frac{3-2\varepsilon^*}{1-2\varepsilon^*}
      \bigg(1 + \frac{\bar{\sf r}_n}{\sqrt{\tau}}\bigg)
      +\sqrt{2 + 2\log\big(1/{\tau}\big)}.
\end{align}
There exists a universal constant $\sfC$ such that for
every $\sfP_n\in \textup{GAC}(\bmu^*,\bSigma,\varepsilon)$
with $\varepsilon\le \varepsilon^*$ and\footnote{Since in
this theorem $\bSigma$ is assumed to be known, we can always
divide all the data points $\bX_i$ by $\|\bSigma\|_{\rm op
}^{1/2}$ to get a data set with a covariance matrix
satisfying $\|\bSigma\|_{\rm op}=1$.} $\|\bSigma\|_{\rm op}
=1$, the probability of the event
\begin{align}
    \big\|\bmuSDR - \bmu^*\big\|_2
    &\le \frac{\sfC\,\sqrt{\log p}}{1-2\varepsilon^*}
    \bigg(\sqrt{\frac{\rSigma}{n}}+ \varepsilon\sqrt{
    \log(2/\varepsilon)} + \sqrt{\frac{\log(1/\delta)}{n}}
    \bigg)
\end{align}
is at least $1-\delta$. Moreover, the constant $\sfC$
from the last display can be made explicit by replacing
the effective rank $\rSigma$ by the dimension $p$ in
the definition of $\bar{\sf r}_n$: That is, for every
$\delta \in (0, 1/5)$ the inequality
\begin{align}
    \big\|\bmuSDR - \bmu^*\big\|_2
    &\le \frac{156\, \sqrt{2\log p}}{1-2\varepsilon^*}
    \bigg( \sqrt{\frac{2p}{n}}+ \varepsilon\sqrt{
    \log(2/\varepsilon)} + \sqrt{\frac{3\log(2/\delta)}{n}}
    \bigg)
\end{align}
holds with probability at least $1 - 5 \delta$.
\end{theorem}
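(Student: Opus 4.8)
The plan is to exploit the orthogonal decomposition $\mathbb R^p=\mathscr U_0\oplus\cdots\oplus\mathscr U_L$ built by the algorithm. Since $\bmuSDR=\sum_{\ell=0}^{L}\hat\bmu^{(\ell)}$ with $\hat\bmu^{(\ell)}\in\mathscr U_\ell$ and $\hat\bmu^{(\ell)}=\Proj_{\mathscr U_\ell}\barx^{(\ell)}$ for $\ell<L$ (and $\hat\bmu^{(L)}$ a projection of the geometric median at the last step), Pythagoras gives $\|\bmuSDR-\bmu^*\|_2^2=\sum_{\ell=0}^{L}\|\Proj_{\mathscr U_\ell}(\barx^{(\ell)}-\bmu^*)\|_2^2$. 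First I would isolate a single event $\mathcal E$ of probability at least $1-\delta$, conditionally on which all later bounds are deterministic; on $\mathcal E$ the inliers $\{\bY_i\}_{i\in\calI}$ are ``regular'' in three senses, each established by a dedicated lemma: (a) for every subspace $V$ the geometric median of $\{\Proj_V\bX_i\}$ lies within $\asymp t\sqrt{\dim V}$ of $\Proj_V\bmu^*$, so the filtering step discards at most $n\tau$ inliers at every level --- this is exactly where $T_1$, $T_2$ and the choices of $t$ and $\tau$ enter, cf.\ \Cref{lem:geommed} and \Cref{ssec:7.2}; (b) a resilience bound: for every $T\subseteq\calI$ with $|T|\le(\varepsilon+\tau)n$ one has $\|\tfrac1n\sum_{i\in T}(\bY_i-\bmu^*)\|\lesssim\varepsilon\sqrt{\log(2/\varepsilon)}+\bar{\sf r}_n$ and $\|\tfrac1n\sum_{i\in T}(\bY_i-\bmu^*)^{\otimes2}\|_{\rm op}\lesssim\varepsilon\log(2/\varepsilon)+\bar{\sf r}_n^2$; (c) a matrix concentration bound $\|\hat\bSigma_\calI-\bSigma\|_{\rm op}\lesssim\bar{\sf r}_n(1+\bar{\sf r}_n)$ for the covariance of all inliers.

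Next I would fix a level $\ell<L$, set $\beta_\ell=|\calS^{(\ell)}\cap\calO|/|\calS^{(\ell)}|\le\varepsilon/(1-\varepsilon-\tau)$, and use the variance decomposition $\hat\bSigma^{(\ell)}=(1-\beta_\ell)\hat\bSigma^{(\ell)}_\calI+\beta_\ell\hat\bSigma^{(\ell)}_\calO+\beta_\ell(1-\beta_\ell)\bDelta^{\otimes2}$, where $\hat\bSigma^{(\ell)}_\calI,\hat\bSigma^{(\ell)}_\calO$ are the covariances of the inlying/outlying filtered points and $\bDelta$ the difference of their means. Splitting $\Proj_{\mathscr U_\ell}(\barx^{(\ell)}-\bmu^*)=(1-\beta_\ell)\Proj_{\mathscr U_\ell}(\barx^{(\ell)}_\calI-\bmu^*)+\beta_\ell\Proj_{\mathscr U_\ell}(\barx^{(\ell)}_\calO-\bmu^*)$, the inlier part is $\lesssim\varepsilon\sqrt{\log(2/\varepsilon)}+\bar{\sf r}_n$ by (a)--(b) (deleting the $\le\tau n$ discarded inliers from the full average costs only $\bar{\sf r}_n$, by the choice of $\tau$). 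For the outlier part, which is the crux, the crude bound $\beta_\ell|\bv^\top\bDelta|\le\sqrt{\beta_\ell\,\bv^\top\hat\bSigma^{(\ell)}\bv}$ only yields $\sqrt{\varepsilon}$; instead I would use the sharper consequence $\beta_\ell(1-\beta_\ell)(\bv^\top\bDelta)^2\le\bv^\top\hat\bSigma^{(\ell)}\bv-(1-\beta_\ell)\bv^\top\hat\bSigma^{(\ell)}_\calI\bv$, in which the $\bv^\top\bSigma\bv$ contributions of the two covariances cancel down to $\beta_\ell\bv^\top\bSigma\bv$ plus a remainder governed by the distance of $\hat\bSigma^{(\ell)}$ and $\hat\bSigma^{(\ell)}_\calI$ to $\bSigma$ \emph{on $\mathscr U_\ell$}. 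The point of the spectral reduction is that this remainder is small precisely there: since $\mathscr U_\ell$ is the span of the $p_\ell-p_{\ell+1}$ bottom eigenvectors of $\bfM_\ell:=\bfV_\ell^\top(\hat\bSigma^{(\ell)}-\bSigma)\bfV_\ell$, the restriction of $\hat\bSigma^{(\ell)}-\bSigma$ to $\mathscr U_\ell$ has top eigenvalue at most $\lambda_{p_\ell-p_{\ell+1}}(\bfM_\ell)\le\tr(\bfM_\ell^{+})/(p_{\ell+1}+1)$, and I would bound $\tr(\bfM_\ell^{+})\lesssim p_\ell(\bar{\sf r}_n+\varepsilon t^2)$ by taking positive parts termwise in the variance decomposition: the inlier term contributes $\lesssim p_\ell\bar{\sf r}_n$ by (c) (the $\le\tau n$ removed inliers also only $\lesssim p_\ell\bar{\sf r}_n$), while the filtered outliers and the rank-one term each contribute $\lesssim\beta_\ell t^2p_\ell$ because every point of $\calS^{(\ell)}$ lies within $t\sqrt{p_\ell}$ of the geometric median. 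The ratio $p_\ell/p_{\ell+1}\le e$ from \eqref{def:p} then turns this into $O(\bar{\sf r}_n+\varepsilon t^2)$, and --- using the explicit form of $t$ in terms of $\tau$ and $\bar{\sf r}_n$ --- into $O(\bar{\sf r}_n+\varepsilon\log(2/\varepsilon))$. Feeding this back, $\beta_\ell|\bv^\top\bDelta|\lesssim\sqrt{\beta_\ell(\varepsilon\log(2/\varepsilon)+\bar{\sf r}_n^2)}+\beta_\ell\lesssim\varepsilon\sqrt{\log(2/\varepsilon)}+\bar{\sf r}_n$ uniformly over unit $\bv\in\mathscr U_\ell$, so $\|\Proj_{\mathscr U_\ell}(\barx^{(\ell)}-\bmu^*)\|_2\lesssim\tfrac1{1-2\varepsilon^*}\big(\varepsilon\sqrt{\log(2/\varepsilon)}+\bar{\sf r}_n\big)$.

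The last level requires a separate, simpler argument: there $\mathscr U_L$ is one-dimensional and the relevant quantity is the error of the geometric median, which on $\mathcal E$ is again $\lesssim\tfrac1{1-2\varepsilon^*}(\varepsilon\sqrt{\log(2/\varepsilon)}+\bar{\sf r}_n)$. Summing over $\ell=0,\dots,L$ and using $L\le 2\log p$ from \eqref{def:p}, Pythagoras gives $\|\bmuSDR-\bmu^*\|_2\lesssim\tfrac{\sqrt{\log p}}{1-2\varepsilon^*}\big(\varepsilon\sqrt{\log(2/\varepsilon)}+\bar{\sf r}_n\big)$, and expanding $\bar{\sf r}_n=(\sqrt{\rSigma}+\sqrt{2\log(2/\delta)})/\sqrt n$ yields the first displayed inequality. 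The explicit-constant version follows by replacing $\rSigma$ by $p$ throughout (which only enlarges $\bar{\sf r}_n$, hence $t$, hence every bound), by tracking the absolute constants in (a)--(c) and in the Bernstein and matrix-Bernstein inequalities they rely on, and by a union bound over the $O(1)$ high-probability events used (whence the degradation from $1-\delta$ to $1-5\delta$).

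The main obstacle, and where essentially all the work lies, is step (a)--(c) in the \emph{data-dependent} subspaces $\mathscr U_\ell$ --- in particular making the estimate on $\tr(\bfM_\ell^{+})$ sharp enough that the outlier contribution comes out as $\varepsilon\sqrt{\log(2/\varepsilon)}$ rather than $\sqrt{\varepsilon}$ (which the covariance constraint alone gives) or a spurious $\varepsilon t$. This is precisely the juncture at which the single filtering pass, the dimension-reduction ratio $e$, and sub-Gaussian resilience must be combined, and it is also where the extraneous $\sqrt{\log p}$ factor --- coming from the crude summation of $L\asymp\log p$ per-level bounds after bounding each projection by the identity --- creeps in.
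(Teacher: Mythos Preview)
Your proposal is essentially the same strategy as the paper's proof, with only organizational differences. Both start from the orthogonal decomposition $\|\bmuSDR-\bmu^*\|_2^2=\sum_\ell\|\Proj_{\mathscr U_\ell}(\barx^{(\ell)}-\bmu^*)\|_2^2$, split each term into an inlier-mean part and an outlier part via the variance decomposition $\hat\bSigma^{(\ell)}=(1-\beta_\ell)\hat\bSigma^{(\ell)}_\calI+\beta_\ell\hat\bSigma^{(\ell)}_\calO+\beta_\ell(1-\beta_\ell)\bDelta^{\otimes2}$, and control the outlier contribution by showing that the relevant eigenvalue of $\bfV_\ell^\top(\hat\bSigma^{(\ell)}-\bSigma)\bfV_\ell$ on the ``bottom'' subspace is small. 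The paper packages this last step as a deterministic inequality (\Cref{prop:projectedmean}), obtained via Weyl's inequality together with $\lambda_{k+1}(\hat\bSigma_{\calS_\calO})\le\tr(\hat\bSigma_{\calS_\calO})/(q-k)\le\delta_Z^2/(q-k)$; your route through $\lambda_{p_\ell-p_{\ell+1}}(\bfM_\ell)\le\tr(\bfM_\ell^+)/(p_{\ell+1}+1)$ followed by a termwise bound on $\tr(\bfM_\ell^+)$ is an equivalent device yielding the same $O(\|\hat\bSigma^{(\ell)}_{\calS_\calI}-\bSigma\|_{\rm op}+\varepsilon t^2)$ control. The stochastic ingredients you list as (a)--(c) correspond to \Cref{lem:geommed}--\Cref{lem:filtercontamine} and \Cref{lem:mean}--\Cref{lem:sigma_p}; in particular your (c) must be the \emph{uniform-over-subsets} covariance bound (as in \Cref{lem:sigmaTr2}), since $\calS^{(\ell)}\cap\calI$ is data-dependent --- the paper's bound there carries an extra $\eta\log(2/\eta)$ term, which is precisely what produces the $\varepsilon\sqrt{\log(2/\varepsilon)}$ after taking $\sqrt{\varepsilon\,\cdot}$. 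One minor correction: at the last level the detailed algorithm (and the proof via \Cref{lem:lowdim}) uses the \emph{filtered mean} on $\mathscr U_L$, not the geometric median, though both give the same rate in dimension one.
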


If we compare this result with its counterpart
established in \citep{dalalyan2020allinone} for the
iteratively reweighted mean, besides the extra $\log p$
factor, we see that the above error bound does not
reduce to the error of the empirical mean when the
contamination rate goes to zero. We do not know whether
this is just a drawback of our proof, or it is an
intrinsic property of the estimator. Our numerical
experiments reported later on suggest that is might
be a property of the estimator.

There is another logarithmic factor, $\sqrt{\log(2/
\varepsilon)}$, present  in the second term of the
error bounds provided by the last theorem, which does
not appear in the minimax rate. There are computationally
intractable robust estimators of the Gaussian mean, such
as the Tukey median, that have an error bound free of
this factor. However, all the known error bounds provably
valid for polynomial time algorithms has this extra
$\sqrt{\log(2/\varepsilon)}$ factor. Furthermore,
this factor is known to be unavoidable in the case of
sub-Gaussian model with adversarial
contamination\footnote{ Both sub-Gaussianity of the
reference distribution and the adversarial nature of
the contamination are important for getting the extra
$\sqrt{\log(2/\varepsilon)}$ factor in the minimax rate.},
see \citep[Section 2]{LugMend21}.

As shows the next theorem, the claims of \Cref{thm:1}
carry over the sub-Gaussian reference distributions
with some slight modifications. These modifications mainly
stem from the following lemma assessing the tail behavior
of the singular values of a matrix having independent
and sub-Gaussian columns.

\begin{lemma}[\cite{Vershynin2012IntroductionTT}, Theorem 5.39]\label{lem:vershynin:bis}
	\it{
	Let $\bxi_{1:n}$ be a matrix consisting of sub-Gaussian vectors with variance proxy $\mathfrak{s}$. There is a universal constant $\sfC_0$ such that for every $t>0$ and for every pair of
	positive integers $n$ and $p$,
	we have
	\begin{align}
		\bfP\big(s_{\min}(\bxi_{1:n})  \le \sqrt{n} - \sfC_0{\mathfrak s}(\sqrt{p}
		+ t)\big)  \le e^{-t^2},\\
		\bfP\big(s_{\max}(\bxi_{1:n})  \ge \sqrt{n} + \sfC_0{\mathfrak s}(\sqrt{p}
		+ t)\big)  \le e^{-t^2}.
	\end{align}
	}
\end{lemma}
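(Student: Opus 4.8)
The plan is to prove the two inequalities simultaneously by controlling the operator norm of the centered Gram matrix $\bfM = \frac1n\,\bxi_{1:n}\bxi_{1:n}^\top - \bfI_p$ and then converting this into control of the extreme singular values. The conversion is a deterministic step: since $\bxi_{1:n}\bxi_{1:n}^\top = \sum_{i=1}^n\bxi_i\bxi_i^\top$, the eigenvalues of $\bfM$ are exactly $s_j(\bxi_{1:n}/\sqrt n)^2 - 1$, so if $\|\bfM\|_{\rm op}\le\max(\delta,\delta^2)$ for some $\delta>0$, then every singular value of $\bxi_{1:n}/\sqrt n$ lies in $[1-\delta,1+\delta]$; equivalently $\sqrt n(1-\delta)\le s_{\min}(\bxi_{1:n})\le s_{\max}(\bxi_{1:n})\le\sqrt n(1+\delta)$. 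It therefore suffices to show that $\|\bfM\|_{\rm op}\le\max(\delta,\delta^2)$ holds with probability at least $1-e^{-t^2}$ for the choice $\delta=\sfC_0\mathfrak s(\sqrt{p/n}+t/\sqrt n)$, because then $\sqrt n\,\delta=\sfC_0\mathfrak s(\sqrt p+t)$ is exactly the deviation appearing in the statement.

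First I would discretize the sphere. Fix a $1/4$-net $\mathcal N$ of $\mathbb S^{p-1}$ with $|\mathcal N|\le 9^p$; the standard inequality for symmetric matrices then gives $\|\bfM\|_{\rm op}\le 2\max_{\bx\in\mathcal N}|\bx^\top\bfM\bx|$, so it is enough to control the quadratic form on the finite set $\mathcal N$. For a fixed $\bx\in\mathbb S^{p-1}$ the scalars $\langle\bxi_i,\bx\rangle$ are independent, centered, sub-Gaussian with variance proxy $\mathfrak s$ and, by isotropy $\mathbb E[\bxi_i\bxi_i^\top]=\bfI_p$, unit second moment; hence $\bx^\top\bfM\bx=\frac1n\sum_{i=1}^n(\langle\bxi_i,\bx\rangle^2-1)$ is an average of $n$ i.i.d.\ centered sub-exponential variables whose sub-exponential norm is of order $\mathfrak s$. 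Bernstein's inequality then yields $\bfP(|\bx^\top\bfM\bx|\ge u)\le 2\exp\big(-c\,n\min(u^2/\mathfrak s^2,u/\mathfrak s)\big)$ for a universal $c>0$.

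Next I would union-bound over $\mathcal N$ and calibrate $u$. Combining the net cardinality with Bernstein,
\begin{align}
\bfP\Big(\|\bfM\|_{\rm op}\ge 2u\Big)
\le 9^p\cdot 2\exp\big(-c\,n\min(u^2/\mathfrak s^2,u/\mathfrak s)\big).
\end{align}
Choosing $u=\tfrac12\max(\delta,\delta^2)$ with $\delta=\sfC_0\mathfrak s(\sqrt{p/n}+t/\sqrt n)$ and taking $\sfC_0$ large enough, the exponent $c\,n\min(u^2/\mathfrak s^2,u/\mathfrak s)$ exceeds $p\log 9+\log 2+t^2$: when $\delta\le 1$ we have $u=\delta/2$ and, since $\mathfrak s\ge 1$ forces the quadratic branch, the exponent contributes $c\,n\delta^2/(4\mathfrak s^2)\gtrsim\sfC_0^2(p+t^2)$; the regime $\delta>1$ is covered by the linear branch $u/\mathfrak s$. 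Hence the right-hand side is at most $e^{-t^2}$, and on the complementary event $\|\bfM\|_{\rm op}\le\max(\delta,\delta^2)$ the deterministic conversion of the first paragraph delivers both claimed inequalities.

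The main obstacle is the bookkeeping in this last step: one must pick $\sfC_0$ so that the factor $9^p=e^{p\log 9}$ from the net is absorbed while the tail genuinely decays like $e^{-t^2}$ (not $e^{-ct^2}$), and one must track both branches of Bernstein so that the resulting deviation is truly linear in $\sqrt p+t$ and in $\mathfrak s$ rather than quadratic. Since the statement is precisely Theorem~5.39 of \citep{Vershynin2012IntroductionTT}, one may instead simply invoke that reference; the sketch above records the $\epsilon$-net argument underlying it.
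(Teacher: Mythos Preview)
The paper does not prove this lemma at all: it is stated as a direct citation of Theorem~5.39 in \citep{Vershynin2012IntroductionTT}, with no accompanying argument. Your sketch is precisely the $\varepsilon$-net plus Bernstein argument that constitutes Vershynin's original proof, so in that sense you have supplied strictly more than the paper does. The deterministic conversion from $\|\bfM\|_{\rm op}\le\max(\delta,\delta^2)$ to the singular-value bounds is correct, the reduction to a $1/4$-net and the sub-exponential tail for $\langle\bxi_i,\bx\rangle^2-1$ are standard, and your observation that isotropy forces $\mathfrak s\ge 1$ (hence the quadratic branch of Bernstein is active when $\delta\le 1$) is the right way to pin down the linear dependence on $\mathfrak s$. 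The only places left imprecise are the constant bookkeeping in the regime $\delta>1$ and the exact absorption of the $9^p$ factor into $\sfC_0$, both of which you already flag; since the lemma is quoted verbatim from a reference, citing it as the paper does is entirely sufficient.
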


Note that in the Gaussian case $\mathfrak s =1$ and the
constant $\sfC_0$ can be chosen equal to $\sqrt{2}$. The last lemma
leads to the following adaptations in the values of the
thresholds used in the SDR estimator. First, we introduce auxiliary definitions
\begin{align}\label{bar_rn_subG}
    \tau = \frac14\bigwedge \frac{\bar{\sf r}_{n,\mathfrak{s}}}{\sqrt{\log_+(2/\bar{\sf r}_{n,\mathfrak{s}})}}, \qquad\text{with}\qquad
    \bar{\sf r}_{n,\mathfrak{s}} = \frac{3\sqrt{\mathfrak{s}}\big(\sqrt{p} + 2\sqrt{\log(2/\delta)}\big)}{\sqrt{n}}.
\end{align}
Then, we set $t = t_1 + t_2$ with
\begin{align}
    t_1 = \frac{2(1 + \sfC_0 \, \bar{\sf r}_{n,\mathfrak{s}}\sqrt{\mathfrak{s}})}{1-2\varepsilon^*},
    \qquad
    t_2 = 1 + \sfC_0 \sqrt{{\mathfrak{s}}}\Big(\frac{\bar{\sf r}_{n, \mathfrak{s}}}{\sqrt{\tau}} +
    \sqrt{2 + \log(1/{\tau})}\Big),
\end{align}
where  $\sfC_0$ is the same as in \Cref{lem:vershynin:bis}.

Now we are ready to state the theorem for the for sub-Gaussian distributions showing that the SDR estimator with the threshold depending on the variance proxy $\mathfrak{s}$ yields the same upper bound on $\ell_2$ distance between our estimator $\bmuSDR$ and the true value $\bmu^*$ replacing the effective rank $\rSigma$ with the dimension $p$.

\begin{theorem}[Sub-Gaussian version]\label{thm:subG}
Let $\varepsilon^* \in (0, 1/2)$, and $\delta \in (0, 1/2)$.
Define $\bar{\sf r}_{n, \mathfrak{s}}$ and $\tau$ as in \eqref{bar_rn_subG}.
For every $\varepsilon \le \varepsilon^*$, let $\bmuSDR$
be the estimator returned by Algorithm~\ref{alg:3}
with
\begin{align}
    t=\frac{3-2\varepsilon^*}{1-2\varepsilon^*}
      \bigg(1 + \sfC_0 \, \bar{\sf r}_{n,\mathfrak{s}} \sqrt{\frac{\mathfrak{s}}{{\tau}}}\bigg)
      + \sfC_0 \, {\mathfrak{s}} \sqrt{2 + 2\log\big(1/{\tau}\big)},
\end{align}
where $\sfC$ is a universal constant.
Then, there exists a constant ${\sfC}_{\mathfrak{s}}$ depending only on the variance proxy $\mathfrak{s}$ such that for
every $\sfP_n\in \textup{SGAC}(\bmu^*,\bSigma,\mathfrak{s}, \varepsilon)$
with $\varepsilon\le \varepsilon^*$ and $\|\bSigma\|_{\rm op}
=1$, the probability of the event
\begin{align}
    \big\|\bmuSDR - \bmu^*\big\|_2
    &\le \frac{{\sfC}_{\mathfrak{s}}\,\sqrt{\log p}}{1-2\varepsilon^*}
    \bigg(\sqrt{\frac{p}{n}}+ \varepsilon\sqrt{
    \log(2/\varepsilon)} + \sqrt{\frac{\log(1/\delta)}{n}}
    \bigg)
\end{align}
is at least $1-\delta$.
\end{theorem}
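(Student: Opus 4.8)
The plan is to mirror the proof of \Cref{thm:1}, which separates into a \emph{deterministic part}---a stability guarantee for Algorithm~\ref{alg:3} on an event where the inliers satisfy a fixed list of concentration properties---and a \emph{probabilistic part} showing that this event has probability at least $1-\delta$. Only the probabilistic part changes when the Gaussian assumption is weakened to $\textup{SG}_p(\mathfrak s)$, so the bulk of the work is to re-derive the concentration inputs with the Gaussian-specific tools (rotational invariance, exact $\chi^2$ tails, Gaussian concentration of Lipschitz functionals) replaced by their sub-Gaussian analogues. Writing $\bxi_i=\bSigma^{-1/2}(\bY_i-\bmu^*)$, so that $\bxi_1,\dots,\bxi_n$ are independent $\textup{SG}_p(\mathfrak s)$ vectors, these inputs are, up to constants depending only on $\mathfrak s$: (a) $\big\|\tfrac1n\sum_{i\in[n]}\bxi_i\big\|_2\lesssim\bar{\sf r}_{n,\mathfrak s}$; (b) $\sup_V\tfrac{2}{n(1-2\varepsilon)\dim(V)}\sum_{i=1}^n\|\ProjV\bSigma^{1/2}\bxi_i\|_2\le t_1$; (c) $\sup_V\#\{i:\|\ProjV\bSigma^{1/2}\bxi_i\|_2^2>t_2^2\dim(V)\}\le n\tau$; and (d) a spectral control of the empirical second-moment matrix, uniform over linear subspaces $V$ of $\RR^p$ and over sub-samples $J\subseteq[n]$ with $|J|\ge n/4$, of the same form as in the Gaussian case, $\sup_{V,J}\big\|\ProjV\big(\tfrac1{|J|}\sum_{i\in J}\bxi_i\bxi_i^\top-\bfI_p\big)\ProjV\big\|_{\rm op}\lesssim\bar{\sf r}_{n,\mathfrak s}+\varepsilon$. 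Here $\|\bSigma\|_{\rm op}=1$ is used so that $\bSigma^{1/2}$ is a contraction, and the effective rank $\rSigma$ is replaced by $p$ in the definitions \eqref{bar_rn_subG} of $\bar{\sf r}_{n,\mathfrak s}$ and $\tau$; (b) and (c) are exactly the events used to define $t_1$ and $t_2$ in \Cref{lem:geommed} and \Cref{ssec:7.2}.

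\emph{Deterministic part.} Granting (a)--(d), the recursion underlying \Cref{thm:1} goes through verbatim. At iteration $\ell<L$, event (c) with $V={\rm Im}(\bfV_\ell)$ forces the $t\sqrt{p_\ell}$-ball around the geometric median $\bar\bmu^{(\ell)}$ to retain all but at most $n\tau$ of the inliers, hence $|\calS^{(\ell)}|\ge(1-\varepsilon-\tau)n$ with at most $[n\varepsilon]$ of its members outliers; together with (d) this bounds the positive eigenvalues of $\bfV_\ell^\top(\hat\bSigma^{(\ell)}-\bSigma)\bfV_\ell$ restricted to the kept subspace $\mathscr U_\ell$ (spanned by the eigenvectors for the $p_\ell-p_{\ell+1}$ smallest eigenvalues), which in turn bounds the $\mathscr U_\ell$-component of $\barx^{(\ell)}-\bmu^*$ by the inlier sampling error plus a term of order $\varepsilon\sqrt{\log(2/\varepsilon)}$, exactly as in the Gaussian argument. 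Because the subspaces $\mathscr U_0,\dots,\mathscr U_L$ are pairwise orthogonal and span $\RR^p$, one has $\|\bmuSDR-\bmu^*\|_2^2=\sum_{\ell=0}^L\|\hat\bmu^{(\ell)}-\Proj_{\mathscr U_\ell}\bmu^*\|_2^2$, and summing the squared per-iteration bounds produces the factor $F(p_0,\dots,p_L)=\sum_{\ell=1}^L p_{\ell-1}/p_\ell\le eL\le 2e\log p$ coming from \eqref{def:p}; event (b) handles the displacements of the successive geometric medians and event (a) (with $p$ in place of $\rSigma$) the last coordinate, and taking square roots yields the asserted bound with all $\mathfrak s$-dependence absorbed into $\sfC_{\mathfrak s}$.

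\emph{Probabilistic part, and the main obstacle.} What remains is to establish (a)--(d) for sub-Gaussian $\bxi_i$. The operator-norm part of (d), and the special case $V=\RR^p$, $J=[n]$ of (a), follow from \Cref{lem:vershynin:bis}, which bounds $s_{\min}(\bxi_{1:n})$ and $s_{\max}(\bxi_{1:n})$ and hence $\|\tfrac1n\bxi_{1:n}\bxi_{1:n}^\top-\bfI_p\|_{\rm op}$; applying it to the column submatrices $[\ProjV\bxi_i]_{i\in J}$ and combining with the filtering constraint (c) to control the contribution of any deleted block of columns gives the version uniform in $(V,J)$. For (b), (c) and the concentration of $\|\tfrac1n\sum_i\bxi_i\|_2$ around its mean, the plan is to discretize over the Grassmannian: for fixed $V$ the functional $\bxi\mapsto\|\ProjV\bSigma^{1/2}\bxi\|_2^2$ is a quadratic form, so the Hanson--Wright inequality yields a sub-exponential tail with parameters of order $\mathfrak s\dim(V)$ and $\mathfrak s^2\dim(V)$; a $\tfrac14$-net of $\mathscr V_k^p$ has log-cardinality $O\big(k(p-k)\big)$, and a union bound over $k$ and over the net, followed by a continuity-in-$V$ argument, promotes the pointwise tails to the uniform statements, at the cost of the logarithmic corrections already built into \eqref{bar_rn_subG}. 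The main obstacle is to carry out this net-and-continuity step with enough care that the thresholds it produces are dominated by $t_1+t_2$ as fixed before the theorem---that is, that replacing the Gaussian tools by Hanson--Wright and \Cref{lem:vershynin:bis} costs only universal constants times powers of $\mathfrak s$ and introduces no dimension factor beyond the $\sqrt{\log p}$ already present in \Cref{thm:1}. Once this is verified, the constant $\sfC_{\mathfrak s}$ is read off from $\sfC_0$ and $\mathfrak s$.
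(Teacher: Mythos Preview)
Your high-level decomposition into a deterministic stability part (unchanged from \Cref{thm:1}) and a probabilistic part (re-derived for $\textup{SG}_p(\mathfrak s)$) is exactly the paper's strategy, and your list (a)--(d) of concentration inputs is the right one. The deterministic recursion you sketch matches the paper's use of \Cref{prop:projectedmean} and the Pythagorean sum over the orthogonal subspaces $\mathscr U_\ell$.

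Where you diverge from the paper is in how you propose to obtain (b) and (c). You plan to use Hanson--Wright pointwise in $V$, then union-bound over a $\tfrac14$-net of the Grassmannian $\mathscr V_k^p$ (log-cardinality $O(k(p-k))$), and finally pass from the net to all $V$ by continuity. You correctly flag this as ``the main obstacle'', and it is a real one: the continuity step requires controlling $\|\ProjV\bxi_i-\Proj_{V'}\bxi_i\|_2$ in terms of $\|\bxi_i\|_2$, which can be of order $\sqrt p$, forcing the net to be fine and inflating the union bound. Making this route produce thresholds dominated by the stated $t_1+t_2$ is delicate and you have not actually carried it out.

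The paper avoids this entirely by noticing that the very same algebraic reduction used in the Gaussian proofs of \Cref{lem:geommed} and \Cref{lem:filternoncontamine} already eliminates the sup over $V$ \emph{before} any probabilistic tool is invoked. Concretely, for any orthonormal basis $\be_1^V,\dots,\be_{d_V}^V$ of $V$ and any index set $J$,
\[
\sup_V \frac{1}{d_V}\sum_{i\in J}\|\ProjV\bxi_i\|_2^2
=\sup_V \frac{1}{d_V}\sum_{l=1}^{d_V}\big\|[\bxi_i]_{i\in J}^\top \be_l^V\big\|_2^2
\le \sup_{\|\be\|_2=1}\big\|[\bxi_i]_{i\in J}^\top \be\big\|_2^2
=\big\|[\bxi_i]_{i\in J}\big\|_{\rm op}^2,
\]
so both (b) (with $J=[n]$) and (c) (after the same union bound over subsets $J$ of size $n\tau$ as in the Gaussian case) reduce to an operator-norm bound on a matrix with independent sub-Gaussian columns. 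This is precisely what \Cref{lem:vershynin:bis} supplies, with the only change from the Gaussian case being the constant $\sfC_0\mathfrak s$ in place of $\sqrt 2$. No Grassmannian net, no Hanson--Wright, and no unresolved obstacle. For (d) the paper likewise just replaces the Gaussian covariance-concentration input by its sub-Gaussian analogue (Koltchinskii--Lounici, Theorem~9), which gives the same bound with $p$ in place of $\rSigma$; your use of \Cref{lem:vershynin:bis} here is essentially equivalent. In short: your plan is sound but you are working harder than necessary on (b)--(c), and the shortcut that resolves your ``main obstacle'' is already present in the Gaussian proof you are adapting.
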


\section{The case of unknown covariance matrix}
\label{sec:4}
The SDR estimator, as defined in \Cref{alg:1}, requires the
knowledge of covariance matrix $\bSigma$. In this section we
consider the case where the matrix $\bSigma$ is unknown,
but an approximation of the latter is available. Namely,
we assume that we have access to a matrix $\btSigma$ and to
a real number $\gamma>0$ such that  $\|\btSigma - \bSigma
\|_{\textup{op}} \le \gamma \|\bSigma\|_{\textup{op}}$. In
such a situation, we can replace in the SDR estimator the
true covariance matrix by its approximation $\btSigma$.
This will necessarily require to adjust the threshold $t$
accordingly. The goal of the present section is to propose
a suitable choice of $t$ and to show the impact of the
approximation error $\gamma$ on the estimation accuracy.

\def\tilde{\widetilde}
As mentioned, the parameter $t$ used in \Cref{alg:1}
needs to be properly tuned in order to account for the
approximation error in the covariance matrix. To this
end, we introduce the following auxiliary notation similar
to those presented in \eqref{bar_rn}:
\begin{align}\label{tilde_rn}
    \tilde{\sf r}_n = \frac{\sqrt{\sfC_\gamma\rtSigma} +
    \sqrt{2 \log( 2/\delta)}}{\sqrt{n}}\qquad
    \text{and}\qquad
    \tilde\tau = \frac14\bigwedge \frac{\tilde{\sf r}_n}{\sqrt{
    \log(2/\tilde{\sf r}_n)}},
\end{align}
where $\sfC_\gamma = (1+\gamma)/(1-\gamma)$. Compared to
\eqref{bar_rn}, the main difference here is the presence
of the factor $\sfC_\gamma$ (which is equal to one if
$\gamma=0$) and the substitution of the effective rank of
$\bSigma$ by that of its approximation $\btSigma$. In the rest
of this section, we assume that $\bSigma$ is invertible.

\begin{theorem}\label{thm:3}
Let $\varepsilon^* \in (0, 1/2)$, $\delta \in (0, 1/2)$ and
define $\tilde{\sf r}_n$ and $\tau$ as in \eqref{tilde_rn}.
Assume that $\btSigma$ satisfies $\|\bSigma^{-1/2}\btSigma
\bSigma^{-1/2} - \bfI_p \|_{\textup{op}} \le \gamma$ for
some $\gamma \in (0, 1/2]$. Let $\bmuSDR$ be the output of
\textup{\sf SDR}$(\bX_1,\ldots, \bX_n;\btSigma,\tilde t_\gamma)$,
see Algorithm~\ref{alg:1}, with
\begin{align}
    \tilde{t}_\gamma=\frac{\|\btSigma\|_{\rm op}}{1-\gamma}
        \bigg\{
        \frac{3-2\varepsilon^*}{1-2\varepsilon^*}
        \bigg(1 + \frac{\tilde{\sf r}_n}{\sqrt{\tilde\tau}}
        \bigg) +\sqrt{2 + \log\big(2/{\tau}\big)}\bigg\}.
\end{align}
Then, there exists a universal constant $\sfC$ such that for
every data generating distribution $\sfP_n\in \textup{GAC}(
\bmu^*,\bSigma,\varepsilon)$ with $\varepsilon\le \varepsilon^*$,
the probability of the event
\begin{align}\label{eq:th3}
    \big\|\bmuSDR - \bmu^*\big\|_2
    &\le \frac{\sfC\, \|\bSigma\|^{1/2}_{\textup{op}}
    \sqrt{\log p}}{1-2\varepsilon^*} \bigg(\sqrt{\frac{\rSigma}{n}}
    + \varepsilon\sqrt{\log(2/\varepsilon)} + \sqrt{\varepsilon\gamma}
    + \sqrt{\frac{\log(1/\delta)}{n}} \bigg)
\end{align}
is at least $1-\delta$.
\end{theorem}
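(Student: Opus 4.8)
The plan is to reduce \Cref{thm:3} to \Cref{thm:1} by quantifying how the substitution of $\bSigma$ by $\btSigma$ propagates through the iterations of \Cref{alg:3}. Concretely, I would first normalise: dividing all data points by $\|\btSigma\|_{\rm op}^{1/2}$, we may assume $\|\btSigma\|_{\rm op}=1$, so that $\|\bSigma\|_{\rm op}\in[1/(1+\gamma),1/(1-\gamma)]\subseteq[\tfrac23,2]$, and the whole-space covariance error satisfies $\|\btSigma-\bSigma\|_{\rm op}\le\gamma\|\bSigma\|_{\rm op}\le 2\gamma$. The key structural fact to establish is that at each level $\ell$, the matrix actually used in the spectral step, $\bfV_\ell^\top(\hat\bSigma^{(\ell)}-\btSigma)\bfV_\ell$, differs from the ``ideal'' matrix $\bfV_\ell^\top(\hat\bSigma^{(\ell)}-\bSigma)\bfV_\ell$ by $\bfV_\ell^\top(\bSigma-\btSigma)\bfV_\ell$, whose operator norm is at most $2\gamma$. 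Hence, by Weyl's inequality, the eigenvalues selected in step (d) of \Cref{alg:3} are within $2\gamma$ of those of the ideal matrix, and the resulting subspaces $\mathscr U_\ell$, while not identical to the ones in the known-covariance analysis, still capture all directions along which $\hat\bSigma^{(\ell)}$ deviates from $\bSigma$ by more than (roughly) the ideal threshold plus $2\gamma$.

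Next I would re-run the error decomposition that underlies \Cref{thm:1}. The error $\|\bmuSDR-\bmu^*\|_2^2$ is a sum over levels of the squared errors $\|\ProjU[\mathscr U_\ell](\hat\bmu^{(\ell)}-\bmu^*)\|_2^2$, and the level-$\ell$ contribution is controlled by (i) the concentration of the projected empirical mean of the inliers, which gives the $\rSigma/n$ and $\log(1/\delta)/n$ terms, (ii) the bias caused by outliers surviving the filter, which gives the $\varepsilon^2\log(1/\varepsilon)$ term, and (iii) the extra bias from the fact that the retained subspace was selected using $\btSigma$ rather than $\bSigma$. For (iii), one shows that any residual outlier-induced second-moment inflation along $\mathscr U_\ell$ that was not removed by the spectral step is at most of order $2\gamma$ (the Weyl slack), and since the outlier mass is at most $\varepsilon$, the corresponding mean-shift along that subspace is at most $O(\sqrt{\varepsilon\gamma})$ per level — this is exactly where the new $\sqrt{\varepsilon\gamma}$ summand in \eqref{eq:th3} comes from. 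Summing the $L\le 2\log p$ levels, using $F(p_0,\dots,p_L)\le e\log p$ as in the known-covariance proof, reproduces the $\sqrt{\log p}$ prefactor; the replacement of $\bar{\sf r}_n$ by $\tilde{\sf r}_n$ (with its $\sfC_\gamma$ factor and $\rtSigma$) together with $\|\bSigma\|_{\rm op}\asymp 1$ then yields effective rank $\rSigma$ back in the final bound, up to the universal constant absorbed into $\sfC$.

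The two places needing care are the threshold and the filtering step. The geometric-median-based filter in steps (b)–(c) of \Cref{alg:3} uses the radius $t\sqrt{p_\ell}$; with the perturbed covariance the scale of the inlier projections changes by a factor $\|\bSigma^{1/2}\btSigma^{-1/2}\|_{\rm op}\le(1-\gamma)^{-1/2}$, which is precisely why $\tilde t_\gamma$ carries the prefactor $\|\btSigma\|_{\rm op}/(1-\gamma)$ and why $\tilde{\sf r}_n$ is inflated by $\sfC_\gamma$. I would verify, mirroring \Cref{lem:geommed} and the computation producing $t_1,t_2$ in \Cref{sec:2}, that with this enlarged $\tilde t_\gamma$ the filter still (a) retains all but a $\tau$-fraction of inliers with high probability and (b) keeps the surviving-outlier fraction controlled — both statements going through because $\gamma\le 1/2$ only changes constants.

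I expect the main obstacle to be item (iii) above: cleanly arguing that the Weyl perturbation $2\gamma$ of the selected eigenvalues translates into only an additive $O(\sqrt{\varepsilon\gamma})$ — and not, say, $O(\gamma)$ or $O(\varepsilon\gamma^{1/2}\,\text{poly}(\log))$ — mean-shift per level, and that these per-level contributions aggregate to $\sqrt{\varepsilon\gamma}$ rather than $\sqrt{\varepsilon\gamma\log p}$. The reason one can hope for the cleaner rate is Cauchy–Schwarz: the total surviving outlier mass across all levels is bounded by $\varepsilon$, and the per-level covariance slack is bounded by $2\gamma$, so the total squared bias is at most $\varepsilon\cdot 2\gamma$ by a single application of the trace/mass bound rather than a union over levels. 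Making this rigorous requires carrying a global budget on the outliers through the recursion (as in the known-covariance proof) rather than bounding each level independently, and then combining it with the per-level spectral slack; this is the technical heart of the argument.
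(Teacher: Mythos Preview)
Your approach is essentially the paper's: rerun the level-by-level analysis of \Cref{thm:1} with $\btSigma$ in place of $\bSigma$, using the effective-rank comparison $\rtSigma\asymp\rSigma$ (the paper isolates this as a lemma) to justify the threshold $\tilde t_\gamma$, and pick up the additional $\gamma$ via the triangle inequality $\|\hat\bSigma^{(\ell)}_{\calS_\calI}-\btSigma\|_{\rm op}\le\|\hat\bSigma^{(\ell)}_{\calS_\calI}-\bSigma\|_{\rm op}+\gamma\|\bSigma\|_{\rm op}$.

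Your final paragraph, however, manufactures a difficulty that isn't there. The target bound \eqref{eq:th3} already carries the global $\sqrt{\log p}$ prefactor in front of \emph{every} summand, including $\sqrt{\varepsilon\gamma}$; you do not need the per-level contributions to aggregate to $\sqrt{\varepsilon\gamma}$ rather than $\sqrt{\varepsilon\gamma\log p}$. Concretely, the paper applies \Cref{prop:projectedmean} at each level with $\bSigma^Z=\bfV_\ell^\top\btSigma\bfV_\ell$ (the proposition is agnostic to which reference matrix you plug in), obtaining $\Err_\ell\le\{8\varepsilon\widetilde T_1+\ldots\}^{1/2}+\ldots$ with $\widetilde T_1=\max_\ell\|\hat\bSigma^{(\ell)}-\btSigma\|_{\rm op}+\varepsilon(\lambda_p-\lambda_1)(\btSigma)$. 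The extra $\gamma$ sits additively inside $\widetilde T_1$, so $\sqrt{\varepsilon\widetilde T_1}$ contributes $\sqrt{2\varepsilon\gamma}$, and summing squares over $L$ levels gives $2\sqrt{2\varepsilon L\widetilde T_1}$, i.e.\ the single $\sqrt{L}\sim\sqrt{\log p}$ factor that is already budgeted. No Cauchy--Schwarz across levels, no global outlier-budget bookkeeping is needed for the $\gamma$ term; the naive per-level bound suffices. Your Weyl-inequality framing is equivalent but slightly more roundabout than simply invoking \Cref{prop:projectedmean} with the perturbed reference matrix.
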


On the one hand, if the value of $\gamma$ is at most of order
$\sqrt{(\rSigma/n) \log(1/\varepsilon)}  + \varepsilon
\log(1/\varepsilon)$ then \Cref{thm:3} implies that the
estimation error is of the same order as in the case of
known covariance matrix $\bSigma$ (\Cref{thm:1}). For
instance, if the matrix $\bSigma$ is assumed to be diagonal,
one can defined $\btSigma$ as the diagonal matrix composed
of robust estimators of the variances of univariate
contaminated Gaussian samples; see, for instance, Section
2 in \citep{Comminges21}. For recent advances on robust estimation
of (non-diagonal) covariance matrices by computationally
tractable algorithms we refer the reader to \citep{D0W19}.

On the other hand, if the value of $\gamma$ for which the
condition $\|\btSigma - \bSigma \|_{\textup{op}} \le \gamma
\|\bSigma\|_{\textup{op}}$ is known to be true is of
larger order than $\sqrt{(\rSigma/n) \log(1/\varepsilon)}
+ \varepsilon \log(1/\varepsilon)$, then $\sqrt{\varepsilon\gamma}$
dominates the other terms appearing in the error bound
\eqref{eq:th3}. Moreover, if $\gamma$ is of constant order,
then we get the error rate $\sqrt{\frac{\rSigma}{n}} +
\sqrt{\varepsilon}$, which is in line with previously known
bounds for computationally tractable estimators; see for example
\citep[Tjheorem 1.1]{LaiRV16}, \citep[Theorem 3.2]{DiakonikolasKK017},
\citep[Theorem 4]{dalalyan2020allinone}.

\section{Numerical experiments}
\label{sec:5}
We conducted numerical experiments on synthetic contaminated
data to corroborate our theoretical results. The main goal
of these experiments is to display statistical and computational
features of the SDR and their dependence on various parameters.
Moreover, we compared SDR to some other estimators proposed in
the literature as well as to the oracle (empirical mean of the inliers). To do so, we selected
componentwise median (CM), geometric median (GM) and Tukey's
median (TM) as the three classic estimators of the context,
and the iteratively reweighted mean (IRM), introduced in
\citep{dalalyan2020allinone}, as an example of optimization
based method.

\subsection{Implementation details}
The experiments were run on a laptop with a 1.8 GHz Intel
Core i7 and 8 GB of RAM. R codes of the experiments
are freely available on the last author’s website. For GM and
TM the R packages
Gmedian\footnote{https://cran.r-project.org/package=Gmedian} (\cite{Gmedian})
and TukeyRegion\footnote{https://cran.r-project.org/web/packages/TukeyRegion/index.html} (\cite{TM})
were used. IRM had been already implemented in R using
Mosek\footnote{www.mosek.com}.

To optimize SDR, several choices were made. First,
since geometric median is used in SDR as a rough estimator
of the location, we limited it to at most 15 iterations and
to stop at an accuracy of order 1. See the reference
manual of Gmedian to have more details on these parameters.
Second, since at the last step of the SDR one can use any
estimator which is robust in low-dimensional setting, we
chose to use the median of the projected data points. Finally,
we adjusted the numerical constant in the threshold $t$.
In all the experiments, we assumed that the true value of
$\varepsilon$ is known and used $\varepsilon^*=\varepsilon$.

\subsection{Experimental setup}

Experiments were conducted on synthetic data sets obtained
by applying a contamination scheme to $n$ i.i.d.\ samples
drawn from $\mathcal{N}_p(\mathbf 0,\bfI_p)$. The following
contamination schemes were considered.
\begin{itemize}
    \item \textit{Contamination by uniform outliers} (CUO):
    the locations of $n\varepsilon$ outliers are chosen
    at random independently of the inliers. The outliers are
    independent Gaussian with identity covariance matrix and
    with means having coordinates independently drawn from the
    uniform in $[0,3]$ distribution.
    \item \textit{Gaussian mixture contamination} (GMC):
    the locations of $n\varepsilon$ outliers are chosen
    at random independently of the inliers. The outliers are
    independent $\mathcal{N}_p(\bmu,\bfI_p)$. In our experiments,
    we chose $\bmu$ such that $\|\bmu\|=15$.
    \item \textit{Contamination by "smallest" eigenvector} (CSE):
    We replace the $n\varepsilon$ samples most correlated with
    the smallest principal eigenvector $\boldsymbol v_p$ of
    the sample covariance matrix, by $n\varepsilon$ vectors
    all equal to $\sqrt{p}\,\boldsymbol v_p$ ($\boldsymbol v_p$
    is assumed to be a unit vector). In contrast with the two
    previous schemes, this one is adversarial.
\end{itemize}
Each experiment was repeated 50 times for SDR, CM, GM, the oracle and 10
times for IRM and TM. The tolerance probability $\delta$ was
set to 0.1 in all the experiments. In the figures, points on
the curves are median values of the error or of the running time
for these trials whereas vertical bars overlaid on the points
show the spread between the first and third quartiles. Since
the computation of TM is prohibitively costly and is possible
only for small sample sizes and dimensions, it is excluded
from most of the experiments.

\subsection{Statistical accuracy}
At the upper left panel of \Cref{fig:1}, we illustrate the
behavior of the risk when the sample size increases for four
different contamination levels: $\varepsilon\in\{0.1,0.2,0.3,0.4\}$.
The data are of dimension 60 and generated by the GMC scheme.
The median estimation error converges respectively to the values
0.18, 0.36, 0.62 and 1.06. According to our theoretical result, the
limit of the error should be proportional to $\frac{\varepsilon \log
(1/\varepsilon)}{1-2\varepsilon}$. This is confirmed by the
experimental results, since the ratio between the empirical limit
of the median error and $\frac{\varepsilon\log(1/\varepsilon)}{
1-2\varepsilon}$ for each $\varepsilon$ is between 0.58 and 0.69.

At the upper right panel of \Cref{fig:1}, the dependence of the
error on the dimension is displayed. To better illustrate the effect
of the dimension on the estimation error, we carried out our
experiment on data sets of small sample size $n=100$ with
CUO contamination. We compared the error of GM, CM, IRM and the
oracle. In this plot, we clearly
observe the supremacy of SDR and IRM as compared to GM and CM,
which is in line with theoretical results. An important observation
is that the error of the SDR estimator is very close to those
of the IRM estimator and the oracle. This suggests that the factor
$\sqrt{\log p}$ present in our theoretical results might be
an artifact of the proof rather than an intrinsic property of
the estimator, at least for nonadversarial contamination.
    \begin{figure}%[ht]
    \centering
    \includegraphics[width=.45\textwidth]{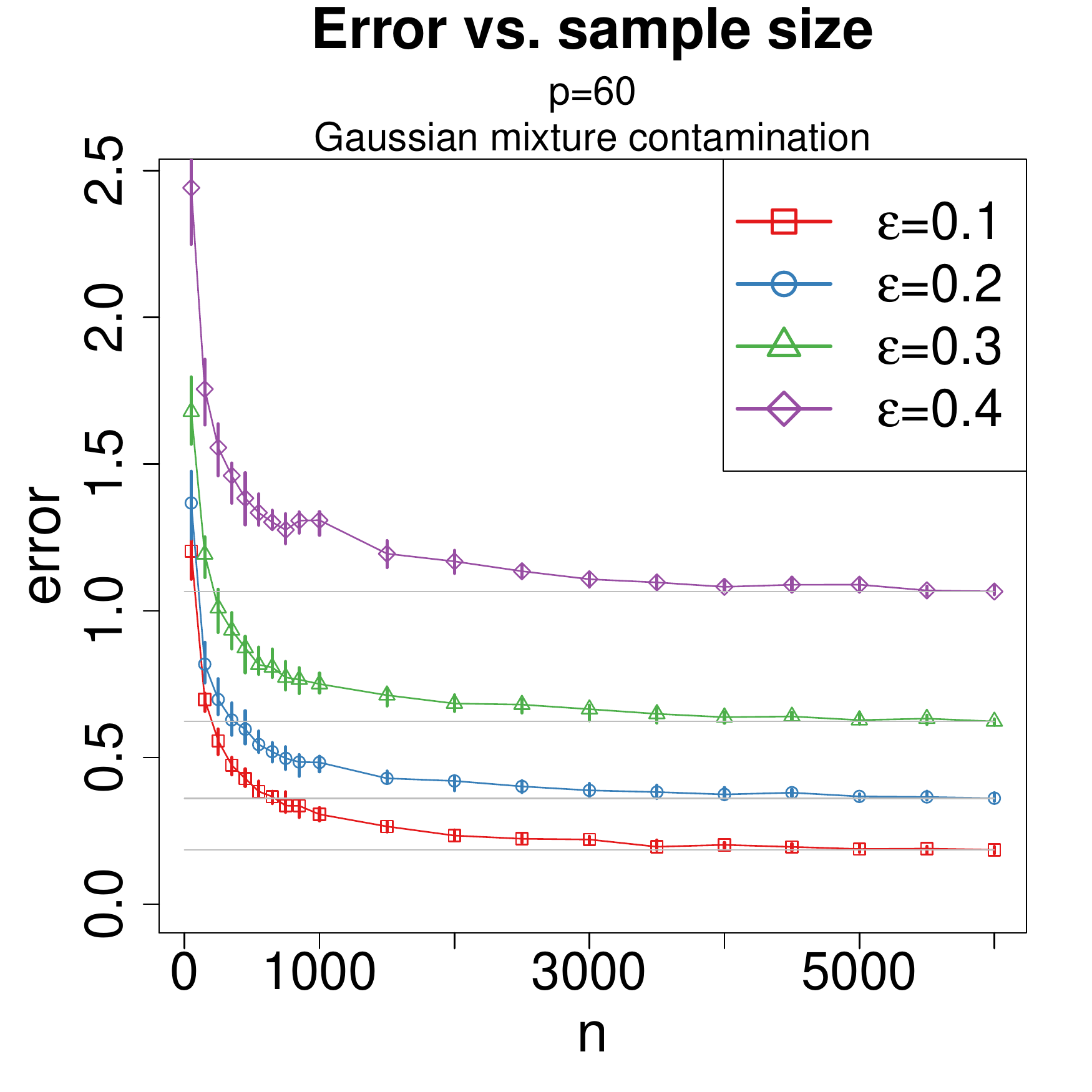}
    \includegraphics[width=.45\textwidth]{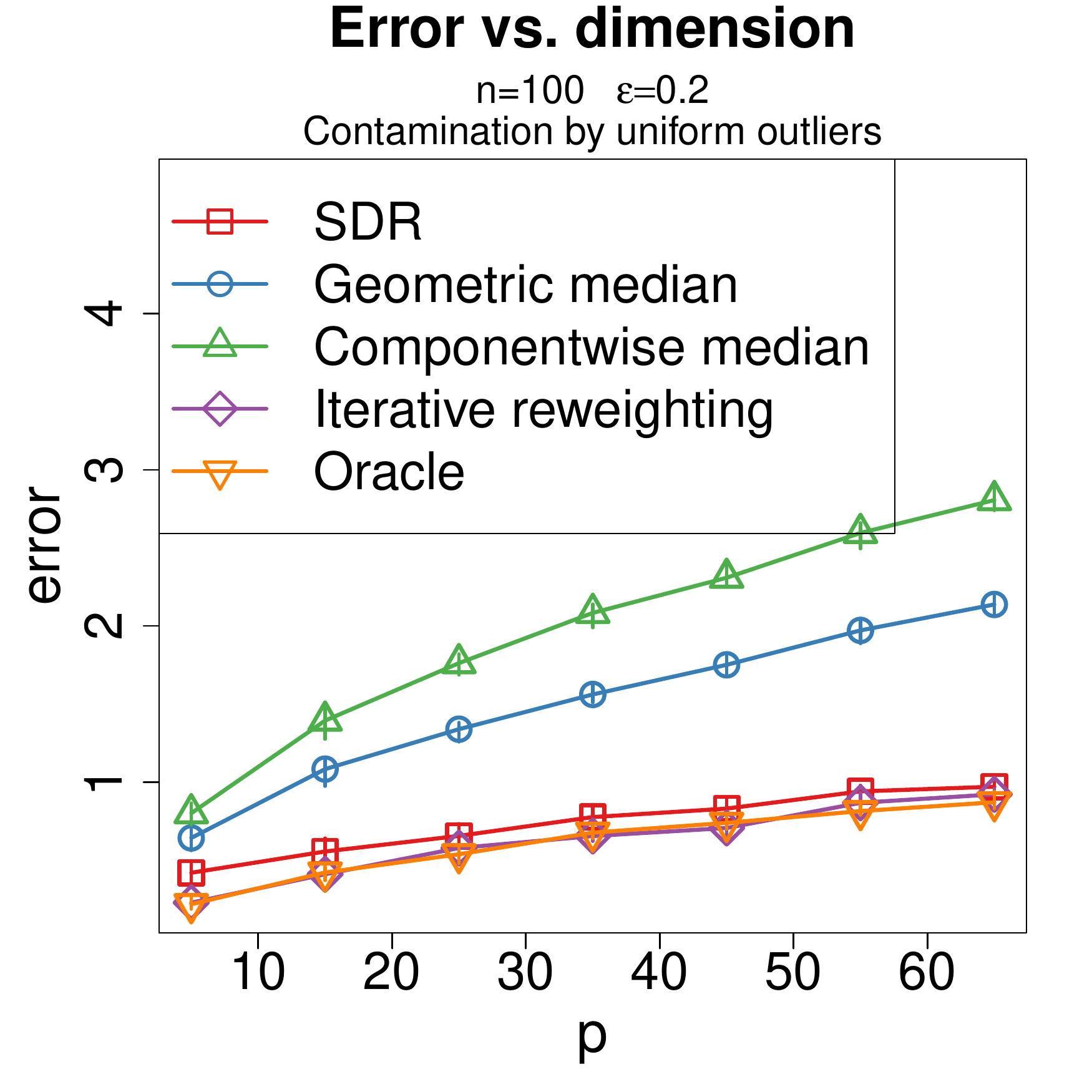}
    \includegraphics[width=.45\textwidth]{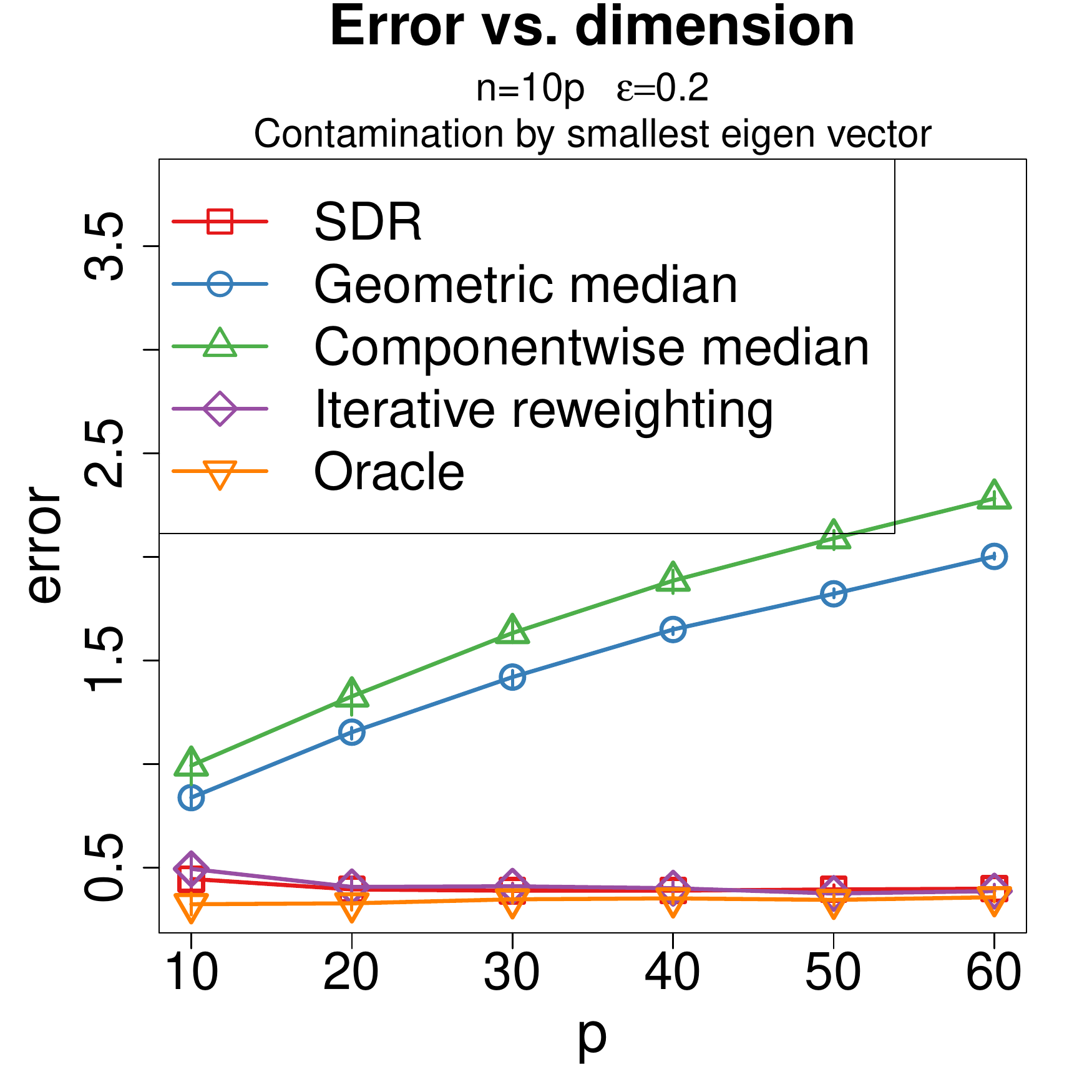}
    \includegraphics[width=.45\textwidth]{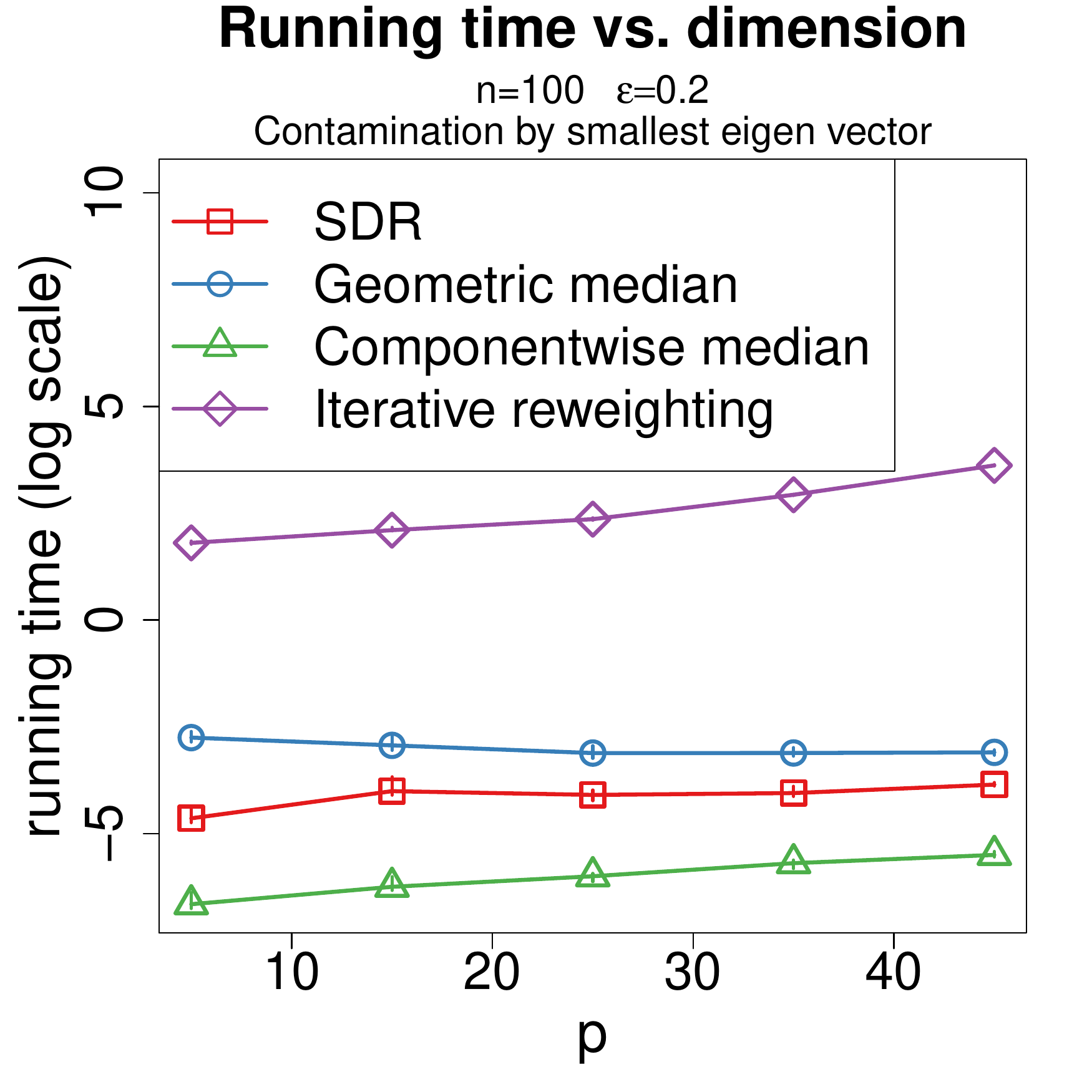}
    \caption{\label{fig:1} The upper left panel illustrates the
    convergence of SDR's median error when the sample size tends
    to infinity, for various contamination rates. The limiting
    values are shown by gray lines. The upper right panel shows
    the effect of dimension on the error. We see that in the
    case of SDR this effect is almost the same as for IRM and
    the oracle. The lower left panel plots the quantities when
    the sample-size increases proportionally to the dimension.
    Once again, we see that SDR is almost as accurate as IRM and
    the oracle. The lower right panel plots the running times of
    different estimators for various dimensions. It shows the huge
    computational gain of the SDR estimator as compared to IRM.
    }
    \end{figure}

The last experiment aiming to display the behavior of the
estimation error is depicted in the lower left panel of \Cref{fig:1}.
The examined synthetic datasets were generated by the CSE scheme with
$\varepsilon=0.2$. We measured the error for different values of
the dimension and for sample size $n=10p$ proportional to the
dimension. In this case, the term $\sqrt{p/n}$ in the risk bound
remains unchanged and we may perceive if the dimension virtually
effects the term dependent on $\varepsilon$ in the bound. The
plot clearly confirms that the error is stable for SDR as it is for
IRM and the oracle, in sharp contrast with GM and CM. The last
point, of course, is not surprising since the risks of GM and CM
scale as  $\varepsilon \sqrt{p}$. Once again, this plot suggests
that the factor $\sqrt{\log p}$ present in the SDR's risk bound
might be unnecessary.

\subsection{Computational efficiency}

We conducted another experiment in order to better understand
the computational complexity of  SDR. Note that the computational
cost of SDR comes from two operations done at each iterations:
SVD of sample covariance matrix and computation of geometric median.
We see that SDR can be computed in a reasonable time even in high
dimensions. For instance, for $n=10000$ and $p=1000$ it takes
nearly 26 seconds (tested over 20 trials).

At the lower right panel of \Cref{fig:1}, we plotted the running
times (in seconds) of GM, CM, IRM and SDR for various dimensions.
Sample size in this experiment was set to 100, contamination rate
was $\varepsilon=0.2$ and CSE contamination scheme was used.
As expected, IRM has substantially larger running time compared to
SDR, GM and CM; this is due to the semidefinite programming solver
running at each iteration of IRM. The fact that SDR is faster than
GM (even though GM is deployed at each iteration of SDR) is
explained by our choice of computing only a rough approximation of
GM within SDR (limiting to 15 iterations and a tolerance parameter
set to 1).

\subsection{Breakdown point}
A natural measure of robustness of an estimator is its resistance
to a large fraction of outliers. The goal here is to demonstrate
empirically our theoretical result showing that the breakdown point
of the SDR estimator is 1/2.

In \Cref{fig:4}, at the left panel, we evaluated the error of the
estimators on samples of size 100 and dimension 10 generated by
the CSE scheme, for various values of $\varepsilon$. We can
observe that SDR preserves its robustness with large contamination
rates and outperforms other estimators, excepted the oracle. More
precisely, SDR and IRM have roughly the same error up to
$\varepsilon=0.28$. Starting from this value, the error of IRM
starts a steep deterioration joining CM and GM.

At the right panel of \Cref{fig:4}, we plotted the error as a
function of the contamination rate for TM, CM, GM, IRM and SDR.
Data used in this experiment were of size 100 and dimension 3,
corrupted by GMC scheme. For this type of contamination, we observe
that the IRM estimator remains robust even for $\varepsilon$ close
to $1/2$, whereas the error of TM deteriorates significantly for
$\varepsilon>1/3$. As a conclusion, for two contamination schemes
which are challenging for iteratively reweighted mean and Tukey's
median, SDR shows very stable behavior.

\begin{figure}%[ht]
    \centering
    \includegraphics[width=.45\textwidth]{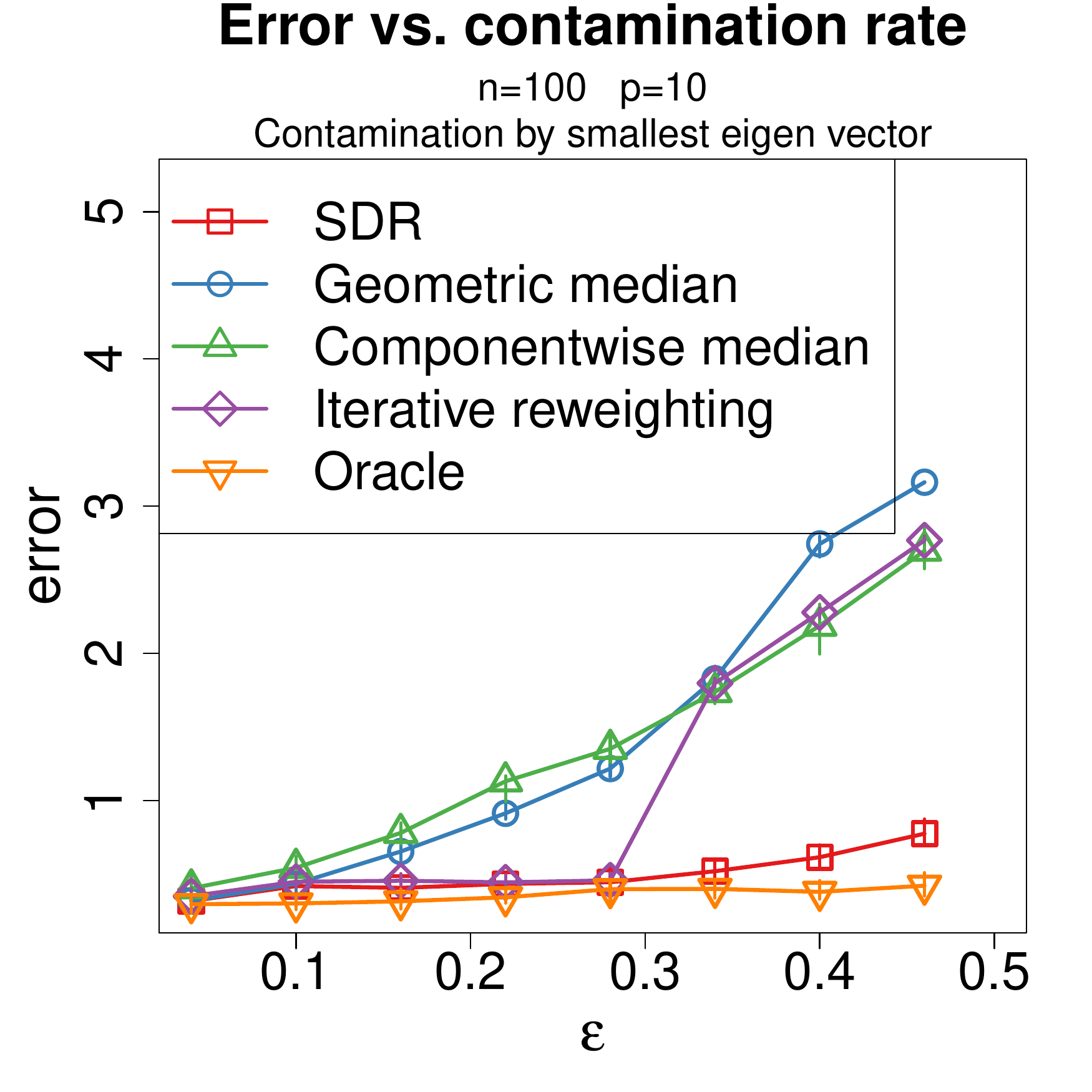}
    \quad\includegraphics[width=.45\textwidth]{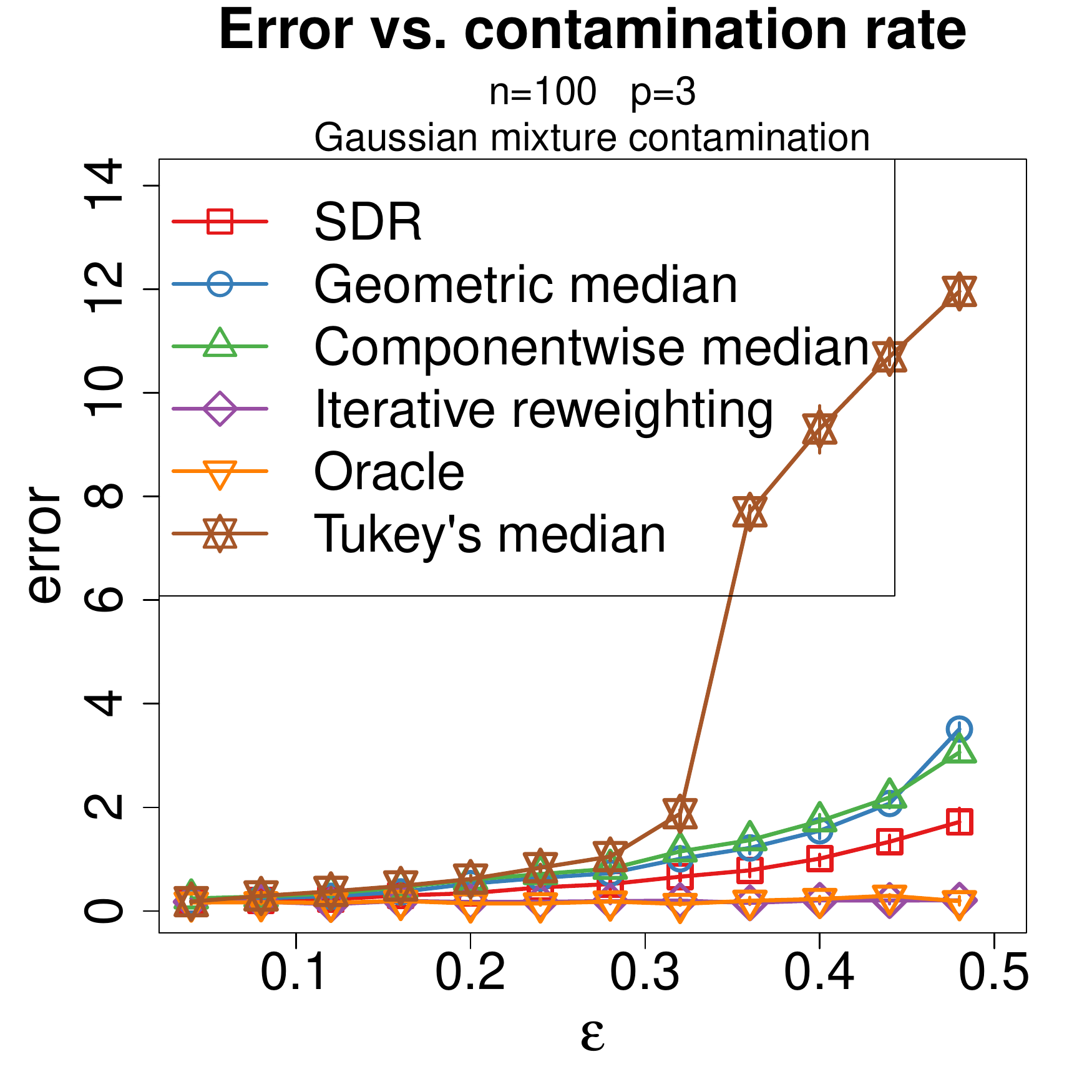}
    \caption{\label{fig:4} The left panel compares the robustness
    of various estimators by displaying the estimation error for
    different contamination rates under SCE scheme. SDR
    outperforms other estimators (even the oracle). Results
    displayed in the right panel are obtained by a similar
    experiment conducted for the GMC scheme. SDR is remarkably
    stable for different contamination schemes, while we see
    that SDR and Tukey's median may behave poorly for
    $\varepsilon>1/3$.}
\end{figure}

%\subsection{SDR versus iterative filtering}

%For IF, our R code is based on the Matlab code provided
%by the authors of the corresponding paper.

%\subsection{Discussion}

\section{Summary, related work and conclusion}
We have proved that the multivariate mean estimator
obtained by the iterative spectral dimension reduction method
enjoys several appealing properties in the setting
of sub-Gaussian observations subject to adversarial
contamination. More precisely, in addition to being rigid
transform equivariant and having breakdown point equal to $1/2$,
the estimator has been shown to achieve the nearly minimax rate.
Furthermore, the SDR estimator has low computational
complexity, confirmed by reported numerical experiments.
Indeed, its computational complexity is of the same order
as that of computing the sample covariance matrix and
performing a SVD on it. Presumably, at the cost of a moderate
drop in accuracy, further speed-ups can be obtained by
randomization \citep{Halko} in the spirit of the prior
work \citep{Yu_Cheng19,Depersin}.

Notably, we have proved that the SDR estimator achieves
the nearly optimal error rate without requiring the
precise knowledge of the contamination rate. It however
requires the knowledge of the covariance matrix. To
alleviate this constraint, we have also established
estimation guarantees in the case where an approximation
of the covariance matrix is used instead of the true one.
We have conducted numerical tests that show that the SDR
is both fast and accurate.

Many recent works studied the problem of robust estimation
in more complex high dimensional settings such
linear regression or sparse mean and covariance estimation;
see \citep{BalakrishnanDLS17,coldal, pensia2020robust,Thompson,
geoffrey2019erm,Goes,LiuSLC20,Cheng2021,Chinot20} and the
references therein. It is under current investigation whether
the results of the present paper can be extended to these
settings.

Another interesting avenue for future research is to find an
estimator that is rate-optimal, computationally tractable,
with breakdown point equal to $1/2$ and, in the same time,
asymptotically optimal in the sense that its risk is of
order $\sqrt{p/n}$ when $\varepsilon$ tends to zero. On
a related note, it would be interesting to push further the
exploration of second-order properties of the risk started
in \citep{minasyan2020}. Finally, an open question is how the
minimax risk blows-up when the contamination rate tends to
$\varepsilon$. For the SDR estimator studied in this work,
we established an upper bound of order $1/(1-\varepsilon)$.
However, we have no clue whether this is optimal, and our
intuition is that it is not.

\section{Proof of \Cref{thm:1}}
Before proving \Cref{thm:1}, we provide some
auxiliary lemmas and propositions.

\subsection{Bounding the projected error of the
average of filtered observations}

For any $J\subset [n]$, we define $\barz_J$ and
$\hat\bSigma{}^Z_J$ as the sample average and sample
covariance matrix of the subsample $\{\bZ_i: i\in J\}$,
that is
\begin{align}
    \barz_J = \frac1{|J|} \sum_{i\in J} \bZ_i,
    \qquad
    \hat\bSigma{}_J^Z = \frac1{|J|} \sum_{i\in J} \bZ_i
    \bZ_i^\top - \barz_J\barz_J^\top.
\end{align}
The main building block of the proof is the following
result.

\begin{proposition}\label{prop:projectedmean}
Let $\calS\subset [n]$ be an arbitrary set. We
define its subsets $\calS_{\calI} = \calS\cap\calI$ and
$\calS_{\calO} = \calS \cap\calO$. Let $\bZ_1,\ldots,\bZ_n$ and
$\bmu^Z$ be arbitrary points in $\mathbb R^q$ with
$q\ge 2$. Let $\bSigma^Z$ be an arbitrary $q\times q$
covariance matrix and let $\Proj_k$ be the projection
matrix projecting onto the subspace spanned by
the bottom $k$ eigenvectors of $\hat\bSigma{}_{\calS}^Z -
\bSigma^Z$, for $k=1,\ldots,q-1$. We have
\begin{align}
    \big\|\Proj_k(\barz_{\calS}-\bmu^Z)\big\|_2
    &\le \bigg\{{2\omega_\calO} \|\hat{\bSigma}{}^Z_{\calS_\calI}
    - \bSigma^Z\|_{\rm op} + \frac{\omega_\calO^2}{\omega_\calI}
    \bigg((\lambda_q-\lambda_1)(\bSigma^Z) +
    \frac{\delta_Z^2}{q-k}\bigg)\bigg\}^{1/2}\!\! + \big\|
    \Proj_k\barxi{}^Z_{\calS_{\calI} }\big\|_2,
\end{align}
where $\omega_{\calO} = |\calS_{\calO}|/|\calS|$,
$\omega_{\calI} = 1 - \omega_{\calO}$, $\bxi^Z_i = \bZ_i -\bmu^Z$
and $\delta_Z = \inf_{\bmu}\max_{i\in\calS} \|\bZ_i - \bmu^Z\|_2$.
Furthermore, if $|\calS| \le q-k$, then
\begin{align}
    \big\|\Proj_k(\barz_{\calS}-\bmu^Z)\big\|_2
    &\le \bigg\{{2\omega_\calO} \|\hat{\bSigma}{}^Z_{\calS_\calI}
    - \bSigma^Z\|_{\rm op} + \frac{\omega_\calO^2}{\omega_\calI}
    (\lambda_q-\lambda_1)(\bSigma^Z) \bigg\}^{1/2}\!\! + \big\|
    \Proj_k\barxi{}^Z_{\calS_{\calI} }\big\|_2.
\end{align}
\end{proposition}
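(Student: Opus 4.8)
The plan is to split the filtered average $\barz_\calS$ into the average of the retained \emph{inliers} plus a bias produced by the retained \emph{outliers}, and then to exploit the fact that $\Proj_k$, the projection onto the bottom-$k$ eigenspace of $\hat\bSigma{}_\calS^Z-\bSigma^Z$, annihilates almost all of that bias. Throughout I write $\omega_\calO=|\calS_\calO|/|\calS|$, $\omega_\calI=1-\omega_\calO$, $\bw=\barz_{\calS_\calO}-\barz_{\calS_\calI}$, and I treat only the generic case $0<\omega_\calO<1$ (the cases $\omega_\calO\in\{0,1\}$ being immediate). Since $\barz_\calS=\omega_\calI\barz_{\calS_\calI}+\omega_\calO\barz_{\calS_\calO}$ one has $\barz_\calS-\bmu^Z=\barxi{}^Z_{\calS_\calI}+\omega_\calO\bw$, hence
\begin{align}
  \big\|\Proj_k(\barz_\calS-\bmu^Z)\big\|_2\le \omega_\calO\big\|\Proj_k\bw\big\|_2+\big\|\Proj_k\barxi{}^Z_{\calS_\calI}\big\|_2,
\end{align}
and it remains to bound $\omega_\calO\|\Proj_k\bw\|_2$ by the square-root term. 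If $\Proj_k\bw=0$ this is trivial; otherwise put $\bv=\Proj_k\bw/\|\Proj_k\bw\|_2$, a unit vector in the range of $\Proj_k$, so $\|\Proj_k\bw\|_2=\bv^\top\bw$.

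\textbf{Variance absorbs the bias.} A corrupted point can move the mean in direction $\bv$ only by inflating the sample variance in that direction. Concretely, the elementary two-group covariance identity
\begin{align}
  \hat\bSigma{}_\calS^Z=\omega_\calI\hat\bSigma{}_{\calS_\calI}^Z+\omega_\calO\hat\bSigma{}_{\calS_\calO}^Z+\omega_\calI\omega_\calO\,\bw\bw^\top
\end{align}
together with $\hat\bSigma{}_{\calS_\calO}^Z\succeq0$ gives $\omega_\calI\omega_\calO(\bv^\top\bw)^2\le\bv^\top\hat\bSigma{}_\calS^Z\bv-\omega_\calI\bv^\top\hat\bSigma{}_{\calS_\calI}^Z\bv$. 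Writing $\hat\bSigma{}_\calS^Z=(\hat\bSigma{}_\calS^Z-\bSigma^Z)+\bSigma^Z$ and likewise for $\hat\bSigma{}_{\calS_\calI}^Z$, and using that $\bv$ lies in the span of the bottom $k$ eigenvectors of $\hat\bSigma{}_\calS^Z-\bSigma^Z$ (so $\bv^\top(\hat\bSigma{}_\calS^Z-\bSigma^Z)\bv\le\lambda_k(\hat\bSigma{}_\calS^Z-\bSigma^Z)$), together with $-\bv^\top(\hat\bSigma{}_{\calS_\calI}^Z-\bSigma^Z)\bv\le\|\hat\bSigma{}_{\calS_\calI}^Z-\bSigma^Z\|_{\rm op}$ and $\bv^\top\bSigma^Z\bv\le\lambda_q(\bSigma^Z)$, I obtain
\begin{align}
  \omega_\calI\omega_\calO(\bv^\top\bw)^2\le\lambda_k(\hat\bSigma{}_\calS^Z-\bSigma^Z)+\omega_\calI\big\|\hat\bSigma{}_{\calS_\calI}^Z-\bSigma^Z\big\|_{\rm op}+\omega_\calO\,\lambda_q(\bSigma^Z).
\end{align}

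\textbf{Controlling $\lambda_k$ — the main obstacle.} This is where the dimension reduction pays off. Decompose $\hat\bSigma{}_\calS^Z-\bSigma^Z=\omega_\calI(\hat\bSigma{}_{\calS_\calI}^Z-\bSigma^Z)+\omega_\calO(\bfA-\bSigma^Z)$ with $\bfA=\hat\bSigma{}_{\calS_\calO}^Z+\omega_\calI\bw\bw^\top\succeq0$. Weyl's inequality ($\lambda_k(X+Y)\le\lambda_{\max}(X)+\lambda_k(Y)$) gives $\lambda_k(\hat\bSigma{}_\calS^Z-\bSigma^Z)\le\omega_\calI\|\hat\bSigma{}_{\calS_\calI}^Z-\bSigma^Z\|_{\rm op}+\omega_\calO(\lambda_k(\bfA)-\lambda_1(\bSigma^Z))$. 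Since all the $\bZ_i$, $i\in\calS$, lie in a ball of radius $\delta_Z$, so do the partial averages $\barz_{\calS_\calI},\barz_{\calS_\calO}$, and $\tr(\bfA)=\tr(\hat\bSigma{}_{\calS_\calO}^Z)+\omega_\calI\|\bw\|^2$ is therefore at most a multiple of $\delta_Z^2$; as $\bfA$ is positive semidefinite, its $k$-th smallest eigenvalue is at most $\tr(\bfA)/(q-k+1)$, whence $\lambda_k(\bfA)\lesssim\delta_Z^2/(q-k)$. Substituting, and multiplying the last display through by $\omega_\calO/\omega_\calI$, yields
\begin{align}
  \omega_\calO^2(\bv^\top\bw)^2\le 2\omega_\calO\big\|\hat\bSigma{}_{\calS_\calI}^Z-\bSigma^Z\big\|_{\rm op}+\frac{\omega_\calO^2}{\omega_\calI}\Big((\lambda_q-\lambda_1)(\bSigma^Z)+\frac{\delta_Z^2}{q-k}\Big),
\end{align}
and taking square roots and combining with the first display proves the first inequality. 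The delicate link is precisely this step: converting the ``all retained points lie in a ball of radius $\delta_Z$'' property into a \emph{per-dimension} gain $\delta_Z^2/(q-k)$ on the bottom eigenvalue of the outlier block $\bfA$; the rest is bookkeeping with Weyl's inequality and the two-group identity (the book-keeping is what pins down the stated absolute constants).

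\textbf{The ``furthermore'' case.} If $|\calS|\le q-k$, then $\hat\bSigma{}_\calS^Z$, being a centered empirical covariance of $|\calS|$ vectors, has rank strictly less than $q-k$, so at least $k$ of its eigenvalues vanish and $\lambda_k(\hat\bSigma{}_\calS^Z-\bSigma^Z)\le\lambda_k(\hat\bSigma{}_\calS^Z)-\lambda_1(\bSigma^Z)=-\lambda_1(\bSigma^Z)$ by Weyl; equivalently $\bfA$ has rank $<q-k$, so $\lambda_k(\bfA)=0$. Feeding this into the previous step removes the $\delta_Z^2/(q-k)$ term and gives the second inequality.
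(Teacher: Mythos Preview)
Your approach is essentially the paper's: the same triangle-inequality split, the same two-group covariance identity, the same use of Weyl to decompose $\lambda_k(\hat\bSigma{}^Z_\calS-\bSigma^Z)$, and the same trace argument to extract the $1/(q-k)$ gain. The structure is correct.

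There is, however, a genuine gap at the step you flag as ``delicate''. You bound $\lambda_k(\bfA)$ with $\bfA=\hat\bSigma{}^Z_{\calS_\calO}+\omega_\calI\bw\bw^\top$ by $\tr(\bfA)/(q-k+1)$ and assert $\tr(\bfA)$ is ``at most a multiple of $\delta_Z^2$''. That multiple is not $1$: one has $\tr(\hat\bSigma{}^Z_{\calS_\calO})\le\delta_Z^2$ but $\omega_\calI\|\bw\|_2^2$ can be as large as $4\delta_Z^2$ (both partial means lie in the ball, so $\|\bw\|_2\le 2\delta_Z$), giving $\tr(\bfA)\le 5\delta_Z^2$ and hence a factor $5$ in front of $\delta_Z^2/(q-k)$. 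Your ``book-keeping'' remark does not recover the stated constant $1$; as written, you prove a weaker inequality than the one in the proposition.

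The paper avoids this by one extra application of Weyl that exploits the \emph{rank-one} structure of $\bw\bw^\top$: writing $\lambda_k\big(\hat\bSigma{}^Z_{\calS_\calO}-\bSigma^Z+\omega_\calI\bw\bw^\top\big)\le \lambda_{k+1}\big(\hat\bSigma{}^Z_{\calS_\calO}-\bSigma^Z\big)+\lambda_{q-1}\big(\omega_\calI\bw\bw^\top\big)$ and noting $\lambda_{q-1}(\bw\bw^\top)=0$. This shifts the index from $k$ to $k+1$ on $\hat\bSigma{}^Z_{\calS_\calO}$ \emph{alone}, and then $\lambda_{k+1}(\hat\bSigma{}^Z_{\calS_\calO})\le \tr(\hat\bSigma{}^Z_{\calS_\calO})/(q-k)\le \delta_Z^2/(q-k)$ with the exact constant. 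Inserting this single step into your argument yields the proposition as stated. (A minor aside: in the ``furthermore'' case your claim ``$\bfA$ has rank $<q-k$'' should read ``$\le q-k$'', since ${\rm rank}(\bfA)\le|\calS_\calO|\le|\calS|\le q-k$; the conclusion $\lambda_k(\bfA)=0$ is unaffected.)
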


\begin{proof}
Since there is no risk of confusion, we remove the
superscript $Z$ from $\bSigma{}^Z$, $\hat{\bSigma}{}^Z_{\calS}$
and so on. Since $\barz_{\calS}= \omega_{\calI}
\barz_{\calS_{\calI}} + w_{\calO}\barz_{\calS_{\calO}}$ yields
$\barz_{\calS} - \barz_{\calS_\calI} = w_{\calO}\big(
\barz_{\calS_{\calO}} - \barz_{\calS_{\calI}}\big)$, the
triangle inequality implies that
\begin{align}
    \big\|\Proj_k(\barz_{\calS}-\bmu)\big\|_2
    &\leq \big\|\Proj_k(\barz_{\calS} -
        \barz_{\calS_\calI}) \big\|_2 + \big\|
        \Proj_k(\barz_{\calS_{\calI}}-\bmu)\big\|_2\\
    &\leq w_{\calO} \big\| \Proj_k( \barz_{\calS_\calI} -
        \barz_{\calS_{\calO}}) \big\|_2
        + \big\|\Proj_k\barxi_{\calS_{\calI}}\big\|_2.
        \label{prop1:eq1}
\end{align}
Moreover, one can check that
\begin{align}
    \hat{\bSigma}_{\calS} & = \omega_{\calI}\hat{\bSigma}_{
    \calS_{\calI}} +\omega_{\calO}\hat{\bSigma}_{\calS_{\calO}}
    + \omega_{\calI}\omega_{\calO}(\barz_{\calS_{\calI}} -
    \barz_{\calS_{\calO}})^{\otimes2}   \label{eq1}\\
    &\succeq \omega_{\calI}\hat{\bSigma}_{\calS_{\calI}} +
    \omega_{\calI}\omega_{\calO}(\barz_{\calS_{\calI}} -
    \barz_{\calS_{\calO}})^{\otimes2}.
\end{align}
Hence, multiplying from left and right by $\Proj_k$ and computing
the largest eigenvalue of both sides, we get
\begin{align}
    \omega_\calI \omega_\calO \|\Proj_k
    (\barz_{\calS_{\calI}} - \barz_{\calS_\calO}) \|^2_2
    &\leq\lambda_q\big(\Proj_k ( \hat{\bSigma}_{\calS} -
    \omega_{\calI} \hat{\bSigma}_{\calS_\calI})
    \Proj_k^\top\big)\\
    &\leq \lambda_q\big( \Proj_k (\hat{\bSigma}_{\calS}-\bSigma)
    \Proj_k^\top \big) + \lambda_q\big(\bSigma - \omega_\calI
    \hat{\bSigma}_{\calS_\calI}\big)\\
    &\le \lambda_{k}(\hat{\bSigma}_{\calS}-\bSigma) + \omega_\calI
    \,\lambda_q\big(\bSigma - \hat{\bSigma}_{\calS_{\calI}}\big) +
    \omega_\calO\lambda_q(\bSigma).
    %\\&\leq\lambda_{p/2}(\hat{\bSigma}_{\calS})-z\omega_\calI + \omega_\calI\|z\bfI-\hat{\bSigma}_{\calS_{\calI}}\|_{\rm op}
    \label{eq2}
\end{align}
On the other hand, using the Weyl inequality (several times)
and the identity \eqref{eq1}, we get
\begin{align}
    \lambda_{k}(\hat{\bSigma}_{\calS}-\bSigma)
    &\leq \omega_\calI \lambda_q\big(
        \hat{\bSigma}_{\calS_{\calI}} -\bSigma\big) +
        \omega_\calO \lambda_{k}\big(\hat{\bSigma}_{
        \calS_\calO} - \bSigma + \omega_\calI
        (\barz_{\calS_{\calI}} - \barz_{\calS_\calO} )^{
        \otimes2}\big)\\
    &\leq
        \omega_\calI \lambda_q\big(
        \hat{\bSigma}_{\calS_{\calI}} -\bSigma\big) +
        \omega_\calO \lambda_{k+1}\big(\hat{\bSigma}_{
        \calS_\calO} - \bSigma\big) + \omega_\calI\lambda_{q-1}\big(
        (\barz_{\calS_{\calI}} - \barz_{\calS_\calO} )^{
        \otimes2}\big)\\
    &\leq \omega_\calI \lambda_q\big(\hat{\bSigma}_{\calS_{\calI}}
        - \bSigma\big) + \omega_\calO \lambda_{k+1} (\hat{\bSigma}_{\calS_\calO}) - \omega_\calO\lambda_1 (\bSigma).
        \label{eq3}
\end{align}
For the middle term of the right hand side, we can use the
following upper bound
\begin{align}
    \lambda_{k+1} (\hat{\bSigma}_{\calS_\calO})
        &\le \frac{\lambda_{k+1}(\hat{\bSigma}_{\calS_\calO})
            + \ldots+\lambda_q(\hat{\bSigma}_{\calS_\calO})}{q-k}\\
        &\le \frac{\tr(\hat{\bSigma}_{\calS_{\calO}})}{q-k} =
            \frac{1}{q-k} \tr\bigg(\frac{1}{|\calS_\calO|}
            \sum_{i\in \calS_{\calO}} (\bZ_i - \barz_{\calS_\calO}
            )^{\otimes 2}\bigg)\\
        &= \frac{1}{q-k}\inf_{\bmu}\tr\bigg(\frac1{|\calS_{\calO}|}
            \sum_{i\in\calS_{\calO}} (\bZ_i - \bmu)^{\otimes 2}
            \bigg)\\
        &= \inf_{\bmu}\frac1{(q-k)|\calS_{\calO}|}
        \sum_{i\in\calS_{\calO}} \|\bZ_i - \bmu\|_2^2
        \le  \frac{\delta_Z^2}{q-k}.
\end{align}
Combining \eqref{eq2}, \eqref{eq3} and the last display,
we get
\begin{align}
    \omega_\calI \omega_\calO \|\Proj_k(\barz_{\calS_\calI}
    - \barz_{\calS_\calO})\|_2^2
    &\leq 2 \omega_\calI\|\hat{\bSigma}_{\calS_{\calI}}
    - \bSigma\|_{\rm op} + \omega_\calO\big(\lambda_q(\bSigma)
    - \lambda_1(\bSigma)\big) + \frac{\omega_\calO \delta_Z^2}{q-k}.
\end{align}
Dividing both sides by $\omega_\calI\omega_\calO$,
we  arrive at
\begin{align}
    \|\Proj_k(\barz_{\calS_{\calI}}-\barz_{\calS_\calO})\|_2^2
    &\leq \frac{2}{
    \omega_\calO} \|\hat{\bSigma}_{\calS_{\calI}}
    - \bSigma\|_{\rm op} + \frac1{\omega_\calI} \bigg(
    \lambda_q(\bSigma) - \lambda_1(\bSigma) +
    \frac{\delta_Z^2}{q-k}\bigg).
\end{align}
Combining this inequality with \eqref{prop1:eq1}, we
obtain the first claim of the proposition. To get the second
claim, we simply remark that $\lambda_{k+1}(\hat\bSigma_{
\calS_\calO}) = 0$ since the rank of the matrix $\hat\bSigma_{
\calS_\calO}$ is less than $|\calS_\calO|\le |\calS| \le q-k$.
\end{proof}

\subsection{Bounding the error of the geometric
median of projected observations}

We assume in this section that $V$ is a linear subspace
of $\mathbb R^p$ of dimension $k$ and consider the
geometric median $\bmuGMV$ of the projected vectors
$\ProjV\bX_1,\ldots, \ProjV\bX_n$ that is
\begin{align}
    \bmuGMV\in\text{arg}\min_{\bmu\in\mathbb R^p}
    \sum_{i=1}^n \|\ProjV\bX_i-\bmu\|_2.
\end{align}
\begin{lemma}\label{lem:geommed}
    With probability at least $1-\delta$, for all
    linear subspaces $V\subset \mathbb R^p$, we have
    \begin{align}
        \frac{\|\bmuGMV - \ProjV\bmu^*\|_2}{\sqrt{\dim(V)}}
        &\leq \frac{2 \sqrt{\|\bSigma\|_{\rm op}}}{1-2\varepsilon}
        \bigg(1 + \frac{\sqrt{\rSigma} + \sqrt{2\log(1/\delta)}
        }{\sqrt{n}}\bigg).
    \end{align}
\end{lemma}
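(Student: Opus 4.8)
The strategy is to reduce the statement to a deterministic bound on the geometric median of projected points plus a high-probability control of a single supremum over all subspaces $V$. I would start from a well-known robustness property of the geometric median: if at least a $(1-\varepsilon)$-fraction of points $\{\ProjV\bX_i\}$ lie within distance $r$ of a point $\bm$, then $\|\bmuGMV - \bm\|_2 \le \frac{2(1-\varepsilon)}{1-2\varepsilon}\, r$ (this is the standard argument bounding the median by the distance to the bulk; see, e.g., \citep{Minsker}). Applying this with $\bm = \ProjV\bmu^*$, it suffices to find an $r$ such that, simultaneously over all $V$, at least $n-[n\varepsilon]$ of the inlier indices $i\in\calI$ satisfy $\|\ProjV(\bX_i-\bmu^*)\|_2 \le r$. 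Since $\bX_i = \bY_i$ for $i\in\calI$ and $\ProjV(\bY_i-\bmu^*) = \ProjV\bSigma^{1/2}\bxi_i$ with $\bxi_i$ the underlying standardized vectors, this is equivalent to controlling $\sup_V \frac1n\sum_{i\in\calI}\|\ProjV\bSigma^{1/2}\bxi_i\|_2$, or more conveniently a bound on the average that then converts to a bound on a $(1-\varepsilon)$-quantile via Markov's inequality.

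The cleanest route is to bound the full-sample quantity $\sup_V \frac1{n}\sum_{i=1}^n \|\ProjV\bxi_i\|_2$ where $\bxi_i\stackrel{\text{iid}}{\sim}\mathcal N_p(\mathbf 0,\bfI_p)$, and absorb the factor $\sqrt{\|\bSigma\|_{\rm op}}$ by noting $\|\ProjV\bSigma^{1/2}\bv\|_2 \le \|\bSigma\|_{\rm op}^{1/2}\|\bv\|_2$ after a change of subspace. For a fixed $V$ of dimension $k$, $\sum_i\|\ProjV\bxi_i\|_2 \le \sqrt{n}\,\big(\sum_i\|\ProjV\bxi_i\|_2^2\big)^{1/2} = \sqrt{n}\,\|\ProjV\bxi_{1:n}\|_F$, and $\|\ProjV\bxi_{1:n}\|_F^2$ concentrates around $nk$ with sub-exponential fluctuations; more precisely $\|\ProjV\bxi_{1:n}\|_F \le s_{\max}(\bxi_{1:n})\sqrt{k}$, so by \Cref{lem:vershynin:bis} with $\mathfrak s=1$, $\sfC_0=\sqrt2$, we get $\|\ProjV\bxi_i\|$ averaged is at most $\sqrt{k}\,(1 + \sqrt2(\sqrt p + t)/\sqrt n)$ on an event of probability $\ge 1-e^{-t^2}$. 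The key point is that $s_{\max}(\bxi_{1:n})$ does \emph{not} depend on $V$, so this single event handles the supremum over all $V$ at once; choosing $t = \sqrt{2\log(1/\delta)}$ and dividing by $\sqrt{\dim V}$ yields the $\big(1 + (\sqrt p + \sqrt{2\log(1/\delta)})/\sqrt n\big)$ shape, which after the reduction from $\bxi$ to $\bSigma^{1/2}\bxi$ becomes $\sqrt{\rSigma}$ in place of $\sqrt p$ (since $\mathbb E\|\ProjV\bSigma^{1/2}\bxi\|_2^2 = \tr(\ProjV\bSigma)\le \tr(\bSigma) = \rSigma\|\bSigma\|_{\rm op}$, and $\rSigma=\tr(\bSigma)$ here as $\|\bSigma\|_{\rm op}=1$).

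The main obstacle is handling the effective-rank version cleanly: the Vershynin bound is stated for isotropic sub-Gaussian columns, so the quantity $\sup_V\sum_i\|\ProjV\bSigma^{1/2}\bxi_i\|_2$ is not directly an $s_{\max}$ of an isotropic matrix, and the naive bound $\|\ProjV\bSigma^{1/2}\bxi_{1:n}\|_F \le \|\bSigma^{1/2}\|_{\rm op}\|\bxi_{1:n}\|_F$ would give $p$ rather than $\rSigma$. The fix is to keep $\bSigma$ inside: write $\|\ProjV\bSigma^{1/2}\bxi_{1:n}\|_F^2 \le \|\bSigma^{1/2}\ProjV\bSigma^{1/2}\|_{\rm op}^{?}$—no, instead bound $\sum_i\|\ProjV\bSigma^{1/2}\bxi_i\|_2 \le \sqrt n\,\|\ProjV\bSigma^{1/2}\bxi_{1:n}\|_F$ and then $\|\ProjV\bSigma^{1/2}\bxi_{1:n}\|_F^2 = \tr(\ProjV\bSigma^{1/2}\bxi_{1:n}\bxi_{1:n}^\top\bSigma^{1/2}\ProjV)$; since $\frac1n\bxi_{1:n}\bxi_{1:n}^\top \preceq s_{\max}^2(\bxi_{1:n})/n \cdot \bfI_p$ uniformly, this is $\le (s_{\max}^2/n)\tr(\ProjV\bSigma) \le (s_{\max}^2/n)\rSigma$. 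Combining with $s_{\max}\le \sqrt n + \sqrt2(\sqrt{\rSigma}+t)$—which requires the slightly more careful sub-Gaussian maximal-singular-value bound in terms of $\rSigma$, or alternatively directly controlling $\|\ProjV\bSigma^{1/2}\bxi_{1:n}\|_F$ via a Bernstein bound for the sum of the independent sub-exponential terms $\|\ProjV\bSigma^{1/2}\bxi_i\|_2^2$—delivers the claimed bound. I would most likely present the Bernstein route for $\sum_{i=1}^n\|\ProjV\bSigma^{1/2}\bxi_i\|_2^2$ (mean $\le n\rSigma$, Orlicz-1 norm of each summand $\lesssim \|\bSigma\|_{\rm op}$ so that the deviation term is $\sqrt{n\rSigma\log(1/\delta)}+\log(1/\delta)$, uniform in $V$ because the Frobenius norm is a smooth function absorbed into a single chaos), then take square roots, divide by $\sqrt{n\dim V}$, use $\sqrt{a+b}\le\sqrt a+\sqrt b$, and feed the resulting quantile bound into the geometric-median robustness inequality with the factor $\tfrac{2}{1-2\varepsilon}$.
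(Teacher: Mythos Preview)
Your middle step is essentially the paper's: after Cauchy--Schwarz you reach $\sum_i\|\ProjV\bxi_i\|_2^2\le k\,s_{\max}^2(\bxi_{1:n})$, which is exactly how the paper makes the bound uniform in $V$. The gaps are at the two ends.

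\emph{Starting inequality for the geometric median.} The quantile-type robustness bound you invoke (``if a $(1-\varepsilon)$-fraction of points are within $r$ then the median is within $\tfrac{2(1-\varepsilon)}{1-2\varepsilon}r$'') does not combine well with an average control: converting an average bound to a $(1-\varepsilon)$-quantile via Markov costs a factor $1/\varepsilon$, which the lemma cannot afford. The paper avoids this entirely by using the \emph{average}-based inequality for the geometric median,
\[
\|\bmuGMV-\ProjV\bmu^*\|_2 \le \frac{2}{n(1-2\varepsilon)}\sum_{i=1}^n\|\ProjV\bxi_i\|_2,
\]
which is \cite[Lemma 2]{dalalyan2020allinone}. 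This connects directly to the sum you then bound, with no quantile detour.

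\emph{Effective rank.} Your ``main obstacle'' is self-inflicted: by standardizing to isotropic $\bzeta_i$ and invoking the sub-Gaussian singular-value lemma you necessarily get $\sqrt{p}$, and neither of your recovery routes (bounding $\tr(\ProjV\bSigma)$ by $\rSigma$, or a Bernstein argument on $\sum_i\|\ProjV\bSigma^{1/2}\bxi_i\|_2^2$) reproduces the lemma's shape --- the first gives the wrong dependence on $k=\dim V$, the second is not uniform in $V$ without additional work. The paper simply applies the Gaussian operator-norm bound to the \emph{anisotropic} matrix $[\bxi_1,\ldots,\bxi_n]$ itself: for i.i.d.\ $\mathcal N_p(\mathbf 0,\bSigma)$ columns one has
\[
\big\|[\bxi_1,\ldots,\bxi_n]\big\|_{\rm op}\le \|\bSigma\|_{\rm op}^{1/2}\big(\sqrt{n}+\sqrt{\rSigma}+\sqrt{2\log(1/\delta)}\big)
\]
with probability at least $1-\delta$ (Gordon/Chevet-type bound plus Gaussian concentration; cited in the paper as \cite[Corollary 5.35]{Vershynin2012IntroductionTT}). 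This single line replaces your entire last paragraph.
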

\begin{proof}
It follows from \cite[Lemma 2]{dalalyan2020allinone} that
\begin{align}
 \|\bmuGMV-\ProjV\bmu^*\|_2\leq \frac{2}{n(1-2\varepsilon)}
 \sum_{i=1}^n\|\ProjV\bxi_i\|_2.
\end{align}
Upper bounding the last term using Cauchy-Schwartz's inequality,
one obtains
\begin{align}
    \|\bmuGMV-\ProjV\bmu^*\|_2&\leq
    \frac{2}{\sqrt{n}(1-2\varepsilon)}\Big(\sum_{i=1}^n\|
    \ProjV\bxi_i\|_2^2\Big)^{1/2}.
\end{align}
Let $\be_1,\ldots,\be_k$ be any orthonormal basis
of $V$, with $k={\rm dim}(V)$. We have
\begin{align}
    \|\bmuGMV-\ProjV\bmu^*\|_2&\leq
    \frac{2}{\sqrt{n}(1-2\varepsilon)}\Big(\sum_{i=1}^n
    \sum_{\ell=1}^k|\be_\ell^\top\bxi_i|^2\Big)^{1/2}\\
    &\le \frac{2}{\sqrt{n}(1-2\varepsilon)}\Big(k
    \sup_{\|\be\|_2=1}\sum_{i=1}^n |\be^\top\bxi_i|^2\Big)^{1/2}\\
    &= \frac{2}{\sqrt{n}(1-2\varepsilon)}\Big(k
    \sup_{\|\be\|_2=1} \big\| [\bxi_1,\ldots,\bxi_n]^\top
    \be \big\|_2^2 \Big)^{1/2}\\
    &= \frac{2\sqrt{k}}{\sqrt{n}(1-2\varepsilon)}
     \big\|[\bxi_1,\ldots,\bxi_n]\big\|_{\rm op}.
\end{align}
By Corollary 5.35 in \citep{Vershynin2012IntroductionTT},
the inequality
\begin{align}
    \big\|[\bxi_1,\ldots,\bxi_n]\big\|_{\rm op} \leq
    \|\bSigma\|^{1/2}_{\rm op} \big(\sqrt{n} + \sqrt{\rSigma}
    + \sqrt{2\log(1/\delta)}\big)
\end{align}
holds with probability at least $1-\delta$. This yields
the desired inequality.
\end{proof}

\subsection{Bounding the number of filtered out observations}
\label{ssec:7.2}
Throughout this section, we assume without loss of
generality that $\|\bSigma\|_{\rm op}=1$.
\begin{lemma}\label{lem:filternoncontamine}
Let $\tau$ and $\delta$ be two numbers from $(0,1)$. Define
\begin{align}
    z = 1 + \frac{\sqrt{\rSigma} + \sqrt{2 \log(1/\delta)}
        }{\sqrt{n\tau}} + \sqrt{2 + 2\log\big(1/\tau\big)}.
\end{align}
With probability at least $1-\delta$, we have
\begin{align}
    \sup_{V} \sum_{i=1}^n \indic\big( \|\ProjV\bxi_i\|_2^2
    > z^2 \dim(V) \big) \le n\tau,
\end{align}
where the supremum is over all linear subspaces $V$ of
$\mathbb R^p$.
\end{lemma}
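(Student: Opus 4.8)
The plan is to eliminate the supremum over subspaces entirely by a pigeonholing argument, reducing the claim to a union bound over index subsets. The only structural fact needed is that, for any linear subspace $V$ with $k=\dim(V)$ and any $J\subset[n]$,
\begin{align}
\sum_{i\in J}\|\ProjV\bxi_i\|_2^2
= \tr\Big(\ProjV\sum_{i\in J}\bxi_i\bxi_i^\top\Big)
\le k\,\Big\|\sum_{i\in J}\bxi_i\bxi_i^\top\Big\|_{\rm op}
= k\,\big\|[\bxi_i:i\in J]\big\|_{\rm op}^2,
\end{align}
which holds because $\tr(\ProjV\bfM)\le k\,\|\bfM\|_{\rm op}$ for every positive semidefinite $\bfM$. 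Now suppose the event in the statement fails: there is a subspace $V$, $k=\dim(V)$, such that $\|\ProjV\bxi_i\|_2^2>z^2k$ for more than $n\tau$ indices $i$. Put $m=\lfloor n\tau\rfloor+1$ (so $m>n\tau$); there is then $J\subset[n]$ with $|J|=m$ and $\|\ProjV\bxi_i\|_2^2>z^2k$ for all $i\in J$. Summing over $i\in J$ and using the display above gives $m z^2 k< k\,\|[\bxi_i:i\in J]\|_{\rm op}^2$; dividing by $k$ makes the subspace disappear, leaving $\|[\bxi_i:i\in J]\|_{\rm op}> z\sqrt m$. Hence it suffices to prove that, with probability at least $1-\delta$, no $J$ of size $m$ satisfies $\|[\bxi_i:i\in J]\|_{\rm op}> z\sqrt m$.

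For a fixed $J$ with $|J|=m$, the matrix $[\bxi_i:i\in J]$ has $m$ independent Gaussian columns with covariance $\bSigma$, and $\|\bSigma\|_{\rm op}=1$ gives $\rSigma=\tr(\bSigma)$; the operator-norm bound already invoked in the proof of \Cref{lem:geommed} (Corollary~5.35 in \citep{Vershynin2012IntroductionTT}) then yields, for every $s>0$,
\begin{align}
\bfP\big(\big\|[\bxi_i:i\in J]\big\|_{\rm op}> \sqrt m+\sqrt{\rSigma}+s\big)\le e^{-s^2/2}.
\end{align}
A union bound over the $\binom nm\le (en/m)^m< (e/\tau)^m$ subsets of size $m$ (using $m>n\tau$) then requires only that $s$ be chosen with $\sqrt m+\sqrt{\rSigma}+s\le z\sqrt m$ and $(e/\tau)^m e^{-s^2/2}\le\delta$. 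Take $s=(z-1)\sqrt m-\sqrt{\rSigma}$, so the first condition holds with equality. Writing $z-1=\tfrac{\sqrt{\rSigma}+\sqrt{2\log(1/\delta)}}{\sqrt{n\tau}}+\sqrt{2+2\log(1/\tau)}$ and using $\sqrt m/\sqrt{n\tau}>1$ together with $(a+b)^2\ge a^2+b^2$ for $a,b\ge0$, one finds $s^2/2> m\log(e/\tau)+\log(1/\delta)$, so that $(e/\tau)^m e^{-s^2/2}<\delta$. This gives the second condition and completes the proof.

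The computations above are routine; the one genuine idea, and the step I would scrutinize most, is the opening inequality, where the per-index threshold is scaled by $\dim(V)$ in precisely the way that cancels the $\dim(V)$ produced by $\tr(\ProjV\bfM)\le\dim(V)\|\bfM\|_{\rm op}$. This cancellation is what allows the supremum over the (non-compact, infinite) family of subspaces to collapse to a discrete union bound over subsets of $[n]$, with no covering of Grassmannians required.
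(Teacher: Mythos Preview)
Your proof is correct and follows essentially the same route as the paper's: reduce the failure event to the existence of a subset $J$ of size $\approx n\tau$ with large operator norm $\|[\bxi_i:i\in J]\|_{\rm op}$, then union-bound over such $J$ using Vershynin's Corollary~5.35 and the estimate $\binom nm\le(en/m)^m$. The only cosmetic difference is that the paper passes from the minimum over $i\in J$ to the average and then expands in an orthonormal basis of $V$, whereas you go directly through the trace inequality $\tr(\ProjV\bfM)\le\dim(V)\|\bfM\|_{\rm op}$; these are the same computation, and your handling of $m=\lfloor n\tau\rfloor+1$ is slightly cleaner than the paper's tacit integrality assumption on $n\tau$.
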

\begin{proof}
Let us define the random variable
\begin{align}
    T_n = \sup_{V} \sum_{i=1}^n \indic\big( \|\ProjV\bxi_i\|_2^2
    > z^2 \dim(V) \big).
\end{align}
In what follows, we write $d_V$ for the
dimension of the subspace $V$. We check that
\begin{align}
    \bfP(T_n > n\tau)
    &\leq\bfP\Big(\exists V\subset \mathbb R^p,
    \exists J\subset[n] \text{ with } |J|=n\tau, \text{ s.t. }
    \min_{i\in J}\|\ProjV\bxi_i\|_2^2\geq z^2\dim(V)\Big) \\
    &\leq\sum_{\substack{J\subseteq[n]\\|J|=n\tau}}
    \bfP\Big(\sup_{V}\min_{i\in J}\|\ProjV\bxi_i\|_2^2 /d_V \geq z^2\Big)\\
    &=\binom{n}{n\tau}\bfP\Big(\sup_{V}
    \min_{i\in \{1,\dots,n\tau\}}\|\ProjV\bxi_i\|_2^2 /d_V \geq z^2 \Big)\\
    &\leq\binom{n}{n\tau} \bfP\Big (\sup_{V}
    \frac{1}{n\tau d_V}
    \sum_{i=1}^{n\tau}\|\ProjV\bxi_i\|_2^2 \geq z^2
    \Big).\label{ineq:5}
\end{align}
Given a linear subspace $V\subset\mathbb R^p$ of
dimension $d_V$,  let $\be_1^V,\dots,\be_{d_V}^V$ be an
orthonormal basis of $V$. Using \eqref{ineq:5}, we get
\begin{align}
    \bfP(T_n\geq n\tau)
    &\leq\binom{n}{n\tau}\bfP\Big(\sup_{V}
    \frac{1}{n\tau d_V} \sum_{l=1}^{d_V} \sum_{i=1}^{n\tau}
    \big|\bxi_i^\top \be_l^V\big|^2\geq z^2\Big)\\
    &\leq \binom{n}{n\tau} \bfP\Big(\sup_{\|\be\|=1}
    \frac{1}{n\tau}\sum_{i=1}^{n\tau}\big|
    \bxi_i^\top \be\big|^2 \geq z^2\Big)
    \\
    &=\binom{n}{n\tau}\bfP\Big(\|[\bxi_1,\dots,\bxi_{n\tau}]
    \|_{\rm op}\geq z\sqrt{n\tau}\Big).
\end{align}
By \citep[Corollary 5.35]{Vershynin2012IntroductionTT}, the inequality
\begin{align}
\big\|[\bxi_1,\ldots,\bxi_{n\tau}]\big\|_{\rm op}
    \leq \big(\sqrt{n\tau} + \sqrt{\rSigma} +
    \sqrt{2\log(1/\delta_0)}\big)
\end{align}
holds with probability at least $1-\delta_0$. Taking
$\delta_0 = \delta/\binom{n}{n\tau}$ and using the inequality
$\log\binom{n}{n\tau}\leq n\tau\big(1 + \log(1/{\tau})\big)$,
we can conclude that $\bfP(T_n\geq n\tau)\leq\delta$.
This proves the lemma.
\end{proof}

%%%%%%%%%%%%%%%%%%%%%%%%%%%%%%%%%%%%%%%%%%%%%%%%%%%%%%

\begin{lemma}\label{lem:filtercontamine}
    Let $\tau\in(0,1/2)$ and $\delta\in(0,1/2)$ be
    arbitrary. Set
    \begin{align}
        t &= \frac{3-2\varepsilon^*}{1-2\varepsilon^*}
            \bigg(1 + \frac{\sqrt{\rSigma} + \sqrt{2
            \log (2/\delta)}}{\sqrt{n\tau}}\bigg) +
            \sqrt{2 + 2\log\big(1/{\tau}\big)},
    \end{align}
    where $\varepsilon^*<1/2$. Assume that $\{\bX_i\}$ are
    drawn from GAC$(\bmu^*,\bSigma,\varepsilon)$ model with
    $\varepsilon\le \varepsilon^*$.
    Then, with probability at least $1-\delta$, for any
    linear subspace $V$ of $\mathbb R^p$, the inequalities
    \begin{align}
        &N_V =\sum_{i\in\calI} \indic\bigg( \frac{\|\ProjV\bX_i -
        \med_{V}\|_2}{\sqrt{\dim(V)}} \leq t\bigg)
        \geq n( 1-\varepsilon-\tau),\\
        &\frac{\|\ProjV\bmu^* - \med_{V} \|_2}{\sqrt{\dim(V)}}
        \le \frac{2}{1-2\varepsilon} \bigg(1 + \frac{
        \sqrt{\rSigma} + \sqrt{2\log(2/\delta)}}{\sqrt{n}}\bigg),
    \end{align}
    where $\med_{V}$ is the geometric median of $\ProjV\bX_1,
    \dots, \ProjV\bX_n$.
\end{lemma}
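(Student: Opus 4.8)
The plan is to derive both inequalities by invoking \Cref{lem:geommed} and \Cref{lem:filternoncontamine}, each at confidence level $1-\delta/2$, and combining them via a union bound. Throughout I would set $\bxi_i = \bY_i-\bmu^*$, which --- since $\|\bSigma\|_{\rm op}=1$ and the GAC model provides a Gaussian $\bY_i$ for every index --- is a centred Gaussian vector with covariance $\bSigma$ for \emph{all} $i\in[n]$, including $i\in\calO$; I would also record that $\med_V$ is exactly the estimator $\bmuGMV$ studied in \Cref{lem:geommed}. Writing $a=(\sqrt{\rSigma}+\sqrt{2\log(2/\delta)})/\sqrt n$, the threshold in the statement is $t=\frac{3-2\varepsilon^*}{1-2\varepsilon^*}(1+a/\sqrt\tau)+\sqrt{2+2\log(1/\tau)}$.

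First I would apply \Cref{lem:geommed} with $\delta$ replaced by $\delta/2$ and $\|\bSigma\|_{\rm op}=1$; this directly yields the second inequality of the present lemma on an event $\mathcal E_1$ of probability $\ge 1-\delta/2$ (using $\log(1/\delta)\le\log(2/\delta)$), and it also gives $\|\ProjV\bmu^*-\med_V\|_2/\sqrt{\dim V}\le \tfrac{2}{1-2\varepsilon}(1+a)\le \tfrac{2}{1-2\varepsilon^*}(1+a/\sqrt\tau)$ for all $V$, by $\varepsilon\le\varepsilon^*$ and $\tau<1$. Second, I would apply \Cref{lem:filternoncontamine} with $\delta$ replaced by $\delta/2$; its constant $z$ then equals precisely $1+a/\sqrt\tau+\sqrt{2+2\log(1/\tau)}$, and on an event $\mathcal E_2$ of probability $\ge 1-\delta/2$ we have $\sum_{i=1}^n\indic(\|\ProjV\bxi_i\|_2^2>z^2\dim V)\le n\tau$ for every $V$; restricting the sum to $i\in\calI$ only decreases it.

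On $\mathcal E_1\cap\mathcal E_2$ (probability $\ge 1-\delta$), fix $V$ and an inlier $i\in\calI$, so $\bX_i=\bY_i$ and hence, by the triangle inequality,
\[
\frac{\|\ProjV\bX_i-\med_V\|_2}{\sqrt{\dim V}}\le\frac{\|\ProjV\bxi_i\|_2}{\sqrt{\dim V}}+\frac{2}{1-2\varepsilon^*}\Big(1+\frac{a}{\sqrt\tau}\Big).
\]
All but at most $n\tau$ of the inliers satisfy $\|\ProjV\bxi_i\|_2/\sqrt{\dim V}\le z$, and for those the right-hand side is at most $z+\tfrac{2}{1-2\varepsilon^*}(1+a/\sqrt\tau)$, which equals $t$ because $1+\tfrac{2}{1-2\varepsilon^*}=\tfrac{3-2\varepsilon^*}{1-2\varepsilon^*}$. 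Since $|\calI|\ge n(1-\varepsilon)$, at least $n(1-\varepsilon-\tau)$ inliers pass the filter, which is the first asserted inequality.

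I do not expect a genuine obstacle here: the argument is essentially bookkeeping. The only points requiring care are (i) recognising $\bxi_i$ as a bona fide Gaussian vector for \emph{every} index, so that \Cref{lem:filternoncontamine} can be applied to all $n$ terms and only afterwards restricted to $\calI$; and (ii) the exact matching of constants --- verifying $z+\tfrac{2}{1-2\varepsilon^*}(1+a/\sqrt\tau)=t$, and that the geometric-median deviation bound (at level $\delta/2$, with $\sqrt n$ in the denominator) is dominated by $\tfrac{2}{1-2\varepsilon^*}(1+a/\sqrt\tau)$ (which carries $\sqrt{n\tau}$) --- both consequences of $\varepsilon\le\varepsilon^*$ and $\tau<1$.
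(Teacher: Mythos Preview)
Your proposal is correct and follows essentially the same approach as the paper: apply \Cref{lem:geommed} and \Cref{lem:filternoncontamine} each at level $\delta/2$, combine via the triangle inequality and a union bound, and verify that the resulting threshold matches $t$. Your explicit clarification that $\bxi_i=\bY_i-\bmu^*$ is Gaussian for \emph{all} $i\in[n]$ (so that \Cref{lem:filternoncontamine} applies to the full sum before restricting to inliers) is a point the paper leaves implicit, and your exact identity $z+\tfrac{2}{1-2\varepsilon^*}(1+a/\sqrt\tau)=t$ is slightly cleaner than the paper's $t\ge t_1+t_2$ bookkeeping, but the substance is identical.
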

\begin{proof}  To avoid
unnecessary technicalities, we assume in this proof that
$n\tau$ is an integer. We also write
\begin{align}
    t_1 &= \frac{2}{1-2\varepsilon}\bigg(1 +
        \frac{\sqrt{\rSigma} + \sqrt{2 \log(2/\delta)}
        }{\sqrt{n}}\bigg)\\
    t_2 &= 1 + \frac{\sqrt{\rSigma} + \sqrt{2 \log( 2/\delta)}
    }{\sqrt{n\tau}} + \sqrt{2 + 2\log(1/{\tau})},
\end{align}
so that $t\ge  t_1 + t_2$. Simple algebra yields
\begin{align}
    N_V &= \sum_{i\in\calI} \indic \big( \| \ProjV
    \bxi_i+\ProjV\bmu^*-\med_{V}\|_2 \leq t\sqrt{\dim(V)}\big)\\
    &\geq \sum_{i=1}^n\indic \big( \| \ProjV
    \bxi_i+\ProjV\bmu^*-\med_{V}\|_2 \leq t\sqrt{\dim(V)}\big) -
    n\varepsilon\\
    &= n-n\varepsilon-\sum_{i=1}^n\indic \big( \| \ProjV
    \bxi_i+\ProjV\bmu^*-\med_{V}\|_2 > t\sqrt{\dim(V)}\big)\\
    &\geq n-n\varepsilon-\sum_{i=1}^n \indic \big( \| \ProjV
    \bxi_i\|_2 + \|\ProjV\bmu^*-\med_{V}\|_2 > t\sqrt{\dim(V)}\big),
\end{align}
where in the last step, we used the triangle inequality.
According to \Cref{lem:geommed}, on an event $\Omega_1$ of
probability at least $1-\delta/2$, we have $ \|\ProjV\bmu^* -
\med_{V} \|_2 \le t_1 \sqrt{\dim(V)}$ for every $V$.
This implies that on this event,
\begin{align}
    N_V &\geq n-n\varepsilon-\sum_{i=1}^n \indic \big( \| \ProjV
    \bxi_i\|_2  > t_2 \sqrt{\dim(V)}\big),\qquad \forall
    V\subset \mathbb R^p.
\end{align}
Using \Cref{lem:filternoncontamine}, we get that on an
event $\Omega_2$ of probability at least $1-\delta/2$,
the sum on the right  hand side of the last display is
less than $n\tau$. Therefore, on the intersection of the
events $\Omega_1$ and $\Omega_2$, we have $N_V\ge n(1-
\varepsilon - \tau)$ for every linear subspace $V$ of
$\mathbb R^p$.
\end{proof}

\subsection{Estimating the mean from a
low-dimensional adversarial projection}

In this section, we consider the following problem. We
assume that for a $q$ dimensional linear subspace $V$
of $\mathbb R^p$, which can depend on the sample $\bX_1,
\ldots,\bX_n$, we observe the projected data $\ProjV\bX_1
,\ldots,\ProjV\bX_n$. The goal is to estimate the
projected mean $\bmu^*_V =\ProjV\bmu^*\in\mathbb R^q$.
We  will estimate $\bmu^*_V$ by the mean of filtered
data points. More precisely, let $\bmuGMV$ be the
geometric median of $\ProjV\bX_1,\ldots,\ProjV\bX_n$.
We set
\begin{align}
    \calS_V = \{i\in[n] : \|\ProjV\bX_i -\bmuGMV\|_2
    \le t\sqrt{q}\},\quad N_V = |\calS_V|,
\end{align}
where $t$ is a real number. We estimate $\bmu^*_{V}$ by
\begin{align}
    \hat\bmu_{V} = \ProjV\barx_{\calS_V} =
    \frac1{N_V} \sum_{i\in\calS_V}
    \ProjV\bX_i.
\end{align}

\begin{lemma}\label{lem:lowdim}
For every positive threshold $t>0$, we have
\begin{align}
    \|\hat\bmu_{V} - \bmu^*_V \|_2
    &\le \frac{\big\|\ProjV\bar\bxi\big\|_2}{N_V}
    + \frac1{N_V} \bigg\|\sum_{i\in\mathcal
    S_V^c\cup \calO}\ProjV\bxi_i\bigg\|_2 + \frac{n
    \varepsilon (t\sqrt{q} + \|\bmuGMV-\bmu^*_{V}\|_2)}{
    N_V}.
\end{align}
\end{lemma}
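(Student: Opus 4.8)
The plan is to decompose the estimation error of $\hat\bmu_V$ into three natural pieces: the contribution of the inliers that pass the filter, the contribution of the inliers that are discarded plus the outliers that pass the filter, and a bias term coming from the fact that the filtered set $\calS_V$ may differ from the true inlier set $\calI$. First I would write $\hat\bmu_V - \bmu^*_V = \frac{1}{N_V}\sum_{i\in\calS_V}(\ProjV\bX_i - \bmu^*_V)$. The cleanest route is to compare with the sum over all inliers: split $\sum_{i\in\calS_V} = \sum_{i\in\calI} - \sum_{i\in\calI\setminus\calS_V} + \sum_{i\in\calS_V\cap\calO}$. For $i\in\calI$ we have $\ProjV\bX_i - \bmu^*_V = \ProjV\bxi_i$, so $\sum_{i\in\calI}(\ProjV\bX_i-\bmu^*_V) = \sum_{i\in\calI}\ProjV\bxi_i$; adding and subtracting $\sum_{i=1}^n \ProjV\bxi_i = n\ProjV\bar\bxi$ lets me write this as $n\ProjV\bar\bxi - \sum_{i\in\calO}\ProjV\bxi_i$. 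Collecting terms, $N_V(\hat\bmu_V - \bmu^*_V) = n\ProjV\bar\bxi - \sum_{i\in\calO}\ProjV\bxi_i - \sum_{i\in\calI\setminus\calS_V}\ProjV\bxi_i + \sum_{i\in\calS_V\cap\calO}(\ProjV\bX_i - \bmu^*_V)$. The middle two sums combine into $\sum_{i\in(\calS_V^c\cup\calO)\cap\,?}$; more precisely $\calO \cup (\calI\setminus\calS_V) = \calO \cup (\calS_V^c\setminus\calO) = \calO \cup \calS_V^c$ minus the overlap, and one checks that $\sum_{i\in\calO}\ProjV\bxi_i + \sum_{i\in\calI\setminus\calS_V}\ProjV\bxi_i$ can be bounded by $\bigl\|\sum_{i\in\calS_V^c\cup\calO}\ProjV\bxi_i\bigr\|_2$ after accounting for the outlier terms in $\calS_V$, which is exactly the second term in the claimed bound.

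Next I would handle the remaining piece, $\sum_{i\in\calS_V\cap\calO}(\ProjV\bX_i - \bmu^*_V)$. Each such $i$ satisfies $\|\ProjV\bX_i - \bmuGMV\|_2 \le t\sqrt q$ by definition of $\calS_V$, so by the triangle inequality $\|\ProjV\bX_i - \bmu^*_V\|_2 \le t\sqrt q + \|\bmuGMV - \bmu^*_V\|_2$. Since $|\calS_V\cap\calO|\le |\calO| \le n\varepsilon$, summing gives $\bigl\|\sum_{i\in\calS_V\cap\calO}(\ProjV\bX_i - \bmu^*_V)\bigr\|_2 \le n\varepsilon\,(t\sqrt q + \|\bmuGMV - \bmu^*_V\|_2)$, which is the third term. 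The $n\ProjV\bar\bxi$ term gives the first term $\|\ProjV\bar\bxi\|_2/N_V$ after dividing through by $N_V$. Dividing the whole identity by $N_V$ and applying the triangle inequality to the four groups of terms yields the stated inequality.

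The one bookkeeping point that needs care — and the only place where the argument is more than routine — is verifying that the combined sum over the discarded inliers and the outliers can indeed be absorbed into a single term $\bigl\|\sum_{i\in\calS_V^c\cup\calO}\ProjV\bxi_i\bigr\|_2$ rather than producing a term over $\calI\setminus\calS_V$ plus a separate outlier term. The subtlety is that $\calS_V^c$ and $\calO$ overlap (some outliers are filtered out, some are not), so one must be careful not to double-count, and one must check that the outliers that survive the filter are correctly attributed to the third term and not to the $\ProjV\bxi_i$ sum (for outliers, $\ProjV\bX_i - \bmu^*_V \ne \ProjV\bxi_i$ in general). I expect the accounting to work out because $\calI\setminus\calS_V \subseteq \calS_V^c$ and $\calO \subseteq \calS_V^c \cup (\calS_V\cap\calO)$, so the $\bxi$-sum over $\calO\cup(\calI\setminus\calS_V)$ restricted to $\calS_V^c$ is dominated by the sum over $\calS_V^c\cup\calO$ in norm; the outlier contributions inside $\calS_V$ are separated off and bounded deterministically as in the previous paragraph. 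This is the step I would write out most carefully; the rest is triangle inequality and counting.
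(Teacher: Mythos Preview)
Your proposal is correct and follows essentially the same approach as the paper: split the filtered sum into its inlier and outlier parts, bound the outlier contribution deterministically via the filter radius, and rewrite the inlier contribution as the full sum minus the complement $(\calS_V\cap\calI)^c = \calS_V^c\cup\calO$. Your bookkeeping worry is unnecessary: $\calO$ and $\calI\setminus\calS_V$ are disjoint (one lies in $\calI^c$, the other in $\calI$), so their $\bxi$-sums add exactly to $\sum_{i\in\calS_V^c\cup\calO}\ProjV\bxi_i$ with no double-counting, and the paper's proof uses precisely this identity.
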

\begin{proof}
For this estimator, using the triangle inequality and the
fact that $\bX_i = \bmu^* + \bxi_i$ for every
$i\in\calI$, we have
\begin{align}
    \|\hat\bmu_{V} - \bmu^*_V \|_2 &=
    \frac1{N_V} \bigg\|\sum_{i\in\calS_V}
    (\ProjV\bX_i-\ProjV\bmu^*)\bigg\|_2\\
    &\le \frac1{N_V} \bigg\|\sum_{i\in\calS_V
    \cap \calI}\ProjV\bxi_i\bigg\|_2 +
    \frac1{N_V} \bigg\|\sum_{i\in\calS_V
    \cap \calO}\ProjV(\bX_i - \bmu^*)\bigg\|_2\\
    &\le \frac1{N_V} \bigg\|\sum_{i\in\calS_V
    \cap \calI} \ProjV\bxi_i\bigg\|_2 + \frac{n\varepsilon
    (t\sqrt{q}+ \|\bmuGMV-\bmu^*_{V}\|_2)}{N_V} .
\end{align}
Using once again the triangle inequality, we arrive at
\begin{align}
    \|\hat\bmu_{V} - \bmu^*_V \|_2
    &\le \frac{\big\|\ProjV\bar\bxi\big\|_2}{N_V}
    + \frac1{N_V} \bigg\|\sum_{i\in\mathcal
    S_V^c\cup \calO}\ProjV\bxi_i\bigg\|_2 + \frac{n
    \varepsilon (t\sqrt{q} + \|\bmuGMV-\bmu^*_{V}\|_2)}{
    N_V} \\
    &\le \frac{\big\|\ProjV\bar\bxi\big\|_2}{N_V}
    + \frac1{N_V} \bigg\|\sum_{i\in\mathcal
    S_V^c\cup \calO}\ProjV\bxi_i\bigg\|_2 + \frac{n
    \varepsilon (t\sqrt{q} + \|\bmuGMV-\bmu^*_{V}\|_2)}{
    N_V}.
\end{align}
This completes the proof.
\end{proof}

\begin{lemma}
For every positive threshold $t>0$, we have
\begin{align}
    \|\hat\bmu_{V} - \bmu^*_V \|_2
    &\le \max_{|\mathcal S|\ge|\mathcal
    S_V\cap \calI|} \bigg\|\frac1{|\mathcal S|}\sum_{i\in\mathcal
    S}\ProjV\bxi_i\bigg\|_2 + \frac{n
    \varepsilon (t\sqrt{q} + \|\bmuGMV-\bmu^*_{V}\|_2)}{
    N_V}.
\end{align}
\end{lemma}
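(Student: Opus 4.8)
The plan is to derive the claimed bound as a light reformulation of the previous two lemmas, the only new ingredient being a cardinality comparison. First I would start from the estimate already obtained inside the proof of \Cref{lem:lowdim} (the last line of its first display chain), namely
\begin{align}
    \|\hat\bmu_{V} - \bmu^*_V \|_2
    &\le \frac1{N_V} \bigg\|\sum_{i\in\calS_V\cap \calI}\ProjV\bxi_i\bigg\|_2
    + \frac{n\varepsilon (t\sqrt{q} + \|\bmuGMV-\bmu^*_{V}\|_2)}{N_V},
\end{align}
which used only that $\bX_i-\bmu^*=\bxi_i$ for $i\in\calI$, that every $i\in\calS_V$ satisfies $\|\ProjV\bX_i-\bmuGMV\|_2\le t\sqrt q$, and that $|\calS_V\cap\calO|\le|\calO|\le n\varepsilon$. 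The second summand here already coincides with the second summand of the target inequality, so the whole task reduces to bounding the first summand.

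To that end I would observe that $\calS_V\cap\calI\subseteq\calS_V$, hence $|\calS_V\cap\calI|\le N_V$ and $1/N_V\le 1/|\calS_V\cap\calI|$ (treating separately the degenerate case $\calS_V\cap\calI=\emptyset$, where the first summand vanishes and the inequality is trivial, the right-hand maximum being a maximum of nonnegative quantities over a nonempty feasible set). Consequently,
\begin{align}
    \frac1{N_V} \bigg\|\sum_{i\in\calS_V\cap\calI}\ProjV\bxi_i\bigg\|_2
    \le \frac1{|\calS_V\cap\calI|}\bigg\|\sum_{i\in\calS_V\cap\calI}\ProjV\bxi_i\bigg\|_2
    = \bigg\|\frac1{|\mathcal S|}\sum_{i\in\mathcal S}\ProjV\bxi_i\bigg\|_2
\end{align}
with the choice $\mathcal S=\calS_V\cap\calI$. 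Since this $\mathcal S$ is a subset of $\calI$ with $|\mathcal S|=|\calS_V\cap\calI|\ge|\calS_V\cap\calI|$, it is feasible in the maximum on the right-hand side of the claimed bound, so the displayed expression is at most $\max_{|\mathcal S|\ge|\calS_V\cap\calI|}\big\|\tfrac1{|\mathcal S|}\sum_{i\in\mathcal S}\ProjV\bxi_i\big\|_2$; substituting this into the first display finishes the argument.

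I do not anticipate any genuine obstacle here: the statement is essentially \Cref{lem:lowdim} rewritten so that the stochastic part becomes a single uniform-over-large-subsets average, which is precisely the form that will later be controlled by a union bound over index sets combined with the sub-Gaussian operator-norm bound of \Cref{lem:vershynin:bis}. The only place that needs a moment's care is the inequality $1/N_V\le 1/|\calS_V\cap\calI|$ together with its edge case, and both are immediate.
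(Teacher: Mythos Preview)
Your argument is correct. The paper states this lemma immediately after \Cref{lem:lowdim} without supplying a separate proof, so there is nothing explicit to compare against; your derivation---reusing the intermediate bound from the proof of \Cref{lem:lowdim} and then applying $1/N_V\le 1/|\calS_V\cap\calI|$ to recognize $\calS_V\cap\calI$ as a feasible choice in the maximum---is precisely the natural (and evidently intended) route.
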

\subsection{Bounding stochastic errors}

Throughout this section, without loss of generality, we
assume that $\|\bSigma\|_{\rm op} = 1$.

\begin{lemma}\label{lem:mean}
For any positive integer $m\le n$ and any $t>0$,
we have
\begin{align}
    \bfP\bigg(\max_{|S|\ge n-m}\Big\|\frac1{|S|}\sum_{i\in S}\Proj\,\bxi_i\Big\|_2 \leq \frac{n\|\Proj\bar\bxi_n
    \|_2}{n-m}  +  \frac{\sqrt{m(2\rSigma+3t)} +
    m\sqrt{3\log(2ne/m)}}{n-m}\bigg)
    & \geq 1 - e^{-t}.
\end{align}
\end{lemma}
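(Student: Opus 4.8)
The plan is to reduce the supremum over large subsets $S$ to a supremum over the small complementary subsets, and then to control the latter by a Gaussian quadratic‑form concentration inequality together with a crude union bound. Recall that here $\bxi_1,\dots,\bxi_n$ are i.i.d.\ $\mathcal N_p(\mathbf 0,\bSigma)$ with $\|\bSigma\|_{\rm op}=1$. Writing
\[
    \frac1{|S|}\sum_{i\in S}\Proj\bxi_i
      = \frac{n}{|S|}\,\Proj\bar\bxi_n-\frac1{|S|}\sum_{i\in S^{c}}\Proj\bxi_i
\]
and using that $|S^{c}|\le m$ while $|S|\ge n-m$ (the case $m=n$ being vacuous, so assume $n-m\ge1$), the triangle inequality yields the \emph{deterministic} bound
\[
    \max_{|S|\ge n-m}\bigg\|\frac1{|S|}\sum_{i\in S}\Proj\bxi_i\bigg\|_2
      \le \frac{n\,\|\Proj\bar\bxi_n\|_2}{n-m}
        +\frac1{n-m}\max_{T\subseteq[n],\,|T|\le m}\bigg\|\sum_{i\in T}\Proj\bxi_i\bigg\|_2 .
\]
Hence it suffices to show that, with probability at least $1-e^{-t}$,
\[
    \max_{T\subseteq[n],\,|T|\le m}\bigg\|\sum_{i\in T}\Proj\bxi_i\bigg\|_2
      \le \sqrt{m(2\rSigma+3t)}+m\sqrt{3\log(2ne/m)} .
\]

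Next I would fix $T$ with $|T|=k\le m$. Then $\sum_{i\in T}\Proj\bxi_i$ is a centered Gaussian vector with covariance $k\,\Proj\bSigma\Proj=:kM$, so by a standard Laurent--Massart-type concentration inequality for Gaussian quadratic forms, for every $u>0$, with probability at least $1-e^{-u}$,
\[
    \bigg\|\sum_{i\in T}\Proj\bxi_i\bigg\|_2^{2}
      \le k\Big(\tr M+2\sqrt{\tr(M^{2})\,u}+2\|M\|_{\rm op}u\Big).
\]
Since $\|M\|_{\rm op}\le\|\bSigma\|_{\rm op}=1$, $\tr M=\tr(\bSigma\Proj)\le\tr(\bSigma)=\rSigma$, and $\tr(M^{2})\le\|M\|_{\rm op}\tr M\le\rSigma$, the elementary inequality $2\sqrt{\rSigma u}\le\rSigma+u$ gives $\|\sum_{i\in T}\Proj\bxi_i\|_2^{2}\le k(2\rSigma+3u)$, hence (using $k\le m$) $\|\sum_{i\in T}\Proj\bxi_i\|_2\le\sqrt{m(2\rSigma+3u)}$ on that event.

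Finally, the number of subsets $T\subseteq[n]$ with $|T|\le m$ is $\sum_{k=0}^{m}\binom nk\le(n/m)^{m}\sum_{k=0}^{n}\binom nk(m/n)^{k}=(n/m)^{m}(1+m/n)^{n}\le(en/m)^{m}\le(2ne/m)^{m}=:N$. Applying the previous display with $u=t+\log N$ and taking a union bound over these $N$ subsets (the empty set contributing the zero vector), with probability at least $1-Ne^{-u}=1-e^{-t}$,
\[
    \max_{T\subseteq[n],\,|T|\le m}\bigg\|\sum_{i\in T}\Proj\bxi_i\bigg\|_2
      \le \sqrt{m\bigl(2\rSigma+3t+3m\log(2ne/m)\bigr)}
      \le \sqrt{m(2\rSigma+3t)}+m\sqrt{3\log(2ne/m)},
\]
where the last step uses $\sqrt{a+b}\le\sqrt a+\sqrt b$. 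Combined with the deterministic reduction, this gives the claim.

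The only genuinely delicate point is that $T\mapsto\|\sum_{i\in T}\Proj\bxi_i\|_2$ is \emph{not} monotone in $|T|$, so one cannot simply restrict to $|T|=m$; this forces the union bound over \emph{all} subsets of size at most $m$, with the surplus absorbed via $\sqrt{k(\cdot)}\le\sqrt{m(\cdot)}$. Everything else is a bookkeeping of constants — in particular the arithmetic--geometric step $\rSigma+2\sqrt{\rSigma u}+2u\le2\rSigma+3u$ and the cardinality estimate $\sum_{k\le m}\binom nk\le(2ne/m)^{m}$ — tuned so that the bound lands exactly in the stated form.
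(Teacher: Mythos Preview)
Your proof is correct and follows essentially the same route as the paper: the same deterministic reduction via the triangle inequality, the same Gaussian quadratic-form concentration (the paper cites \cite[Lemma~8]{Comminges}, which is the Laurent--Massart bound you invoke), and the same union bound over subsets of size at most $m$. The only cosmetic difference is that the paper stratifies the union bound by the subset size $s$ and uses a size-dependent threshold $t_s=3s\log(2ne/s)+3t$ (so that $(ne/s)^s e^{-t_s/3}\le 2^{-s}e^{-t}$ sums to at most $e^{-t}$), whereas you apply a single threshold $u=t+m\log(2ne/m)$ uniformly; both lead to the identical final bound.
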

\begin{proof}
Using the triangle inequality, one has
\begin{align}
    \frac{1}{|S|}\Big\| \sum_{i\in S}\Proj\bxi_i\Big\|_2
    &\leq\frac{1}{n-m} \Big\| \sum_{i\in S}\Proj\bxi_i\Big\|_2
    =\frac{1}{n-m}\Big\|\sum_{i=1}^n\Proj\bxi_i-
    \sum_{i\in S^C}\Proj\bxi_i\Big\|_2\\
    &\leq\frac{1}{n-m}\Big\|\sum_{i=1}^n\Proj\bxi_i\Big\|_2
    + \frac{1}{n-m}\Big\|\sum_{i\in S^C}\Proj\bxi_i\Big\|_2\\
    &\leq\frac{n\|\Proj\bar\bxi_n\|_2}{n - m} +
    \frac{1}{n-m}\max_{|J|\le m} \Big\| \sum_{i\in J}
    \bxi_i \Big\|_2.\label{eq:8}
\end{align}
For $s\in[1,m]$, we choose $t_s$ by
\begin{align}
    t_s = 3s\log\Big(\frac{2ne}{s}\Big) + 3t,
\end{align}
so that $t_s\le t_m = 3m\log\big(\frac{2ne}{m}\big) + 3t$ and
\begin{align}
    \Big(\frac{ne}{s}\Big)^s e^{-t_s/3} \leq 2^{-s}e^{-t}.
\end{align}
For every $J$ of cardinality $m$, the random variable $\|\sum_{j\in J}\bxi_j\|^2$ has the same distribution as $m\sum_{j=1}^p\lambda_j(\bSigma)\alpha_j^2$, where $\alpha_1,\dots,\alpha_p$ are i.i.d. standard Gaussian.
Hence, using the union bound, the well-known upper bound
on the binomial coefficients and \cite[Lemma 8]{Comminges}, we have
\begin{align}
    \bfP\Big(\max_{|J|\leq m} \Big\|\sum_{i \in J}
    \bxi_i \Big\|_2^2 \geq m(2\rSigma + t_m)\Big)
    &\leq \sum_{s=1}^{m} \binom{n}{s} \bfP\Big( \Big\|
    \sum_{i=1}^s \bxi_i\Big\|_2^2 \geq m(2\rSigma+t_m)\Big)\\
    &\leq \sum_{s=1}^{m} \Big(\frac{ne}{s}\Big)^s \bfP\Big(
    \Big\|\sum_{i=1}^s \bxi_i\Big\|_2^2 \geq s(2\rSigma + t_s)
    \Big)\\
    &\leq \sum_{s=1}^{m} \Big(\frac{ne}{s}\Big)^s e^{-t_s/3}
    \leq e^{-t}.
\end{align}
This entails that with probability at least $1-e^{-t}$, we have
\begin{align}
    \max_{|J|\leq m}  \Big\|\sum_{i \in J} \bxi_i \Big\|_2
        &\leq \sqrt{m(2\rSigma + t_m)}
        \leq \sqrt{m(2\rSigma + 3t)} + m\sqrt{3\log(2ne/m)}.
\end{align}
Combining this inequality with \eqref{eq:8}, we get the
claim of the lemma.
\end{proof}

In the two next lemmas, given a set $\calS\subset [n]$,
we look at the sample average and sample covariance matrix
of the subsample $\{\bX_i: i\in \calS\}$,
\begin{align}
    \barx_\calS = \frac1{|\calS|} \sum_{i\in\calS} \bX_i,\qquad
    \hat\bSigma{}_\calS = \frac1{|\calS|} \sum_{i\in \calS} \bX_i
    \bX_i^\top - \barx_\calS\barx_\calS^\top.
\end{align}

\begin{lemma}\label{lem:sigmaTr2}
There exists a positive constant $A$ such that, for
any positive integer $m\le n$ and any $t\geq1$, with
probability at least $1-2e^{-t}$, the inequality
\begin{align}
    \|\hat{\bSigma}_{\calS} - \bSigma\|_{\rm op}\leq
    A\frac{\sqrt{n\rSigma} + \rSigma + m\log(2ne/m)+2t}{n-m} + \big\|\barxi_\calS\big\|_2^2
\end{align}
is satisfied for every $\calS\subset [n]$ of cardinality
$\geq n-m$.
\end{lemma}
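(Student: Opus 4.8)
The plan is to reduce the statement to two Gaussian covariance-concentration estimates — one for the whole sample, one for the contribution of the at most $m$ indices excluded from $\calS$ — combined through the splitting $\sum_{i\in\calS}=\sum_{i\in[n]}-\sum_{i\in\calS^c}$, exactly as in the proof of \Cref{lem:mean}. Since the empirical covariance matrix is invariant under a common translation of the points, and since in this section the $\bX_i$ are the reference Gaussian vectors, I would first take $\bmu^*=\mathbf 0$, so that $\bX_i=\bxi_i\sim\mathcal N(\mathbf 0,\bSigma)$ are i.i.d.\ with $\|\bSigma\|_{\rm op}=1$ and $\hat\bSigma_\calS=\tfrac1{|\calS|}\sum_{i\in\calS}\bxi_i\bxi_i^\top-\barxi_\calS\barxi_\calS^\top$. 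The triangle inequality, $\|\barxi_\calS\barxi_\calS^\top\|_{\rm op}=\|\barxi_\calS\|_2^2$, and $|\calS|\ge n-m$, $|\calS^c|\le m$ then give
\begin{align*}
    \|\hat\bSigma_\calS-\bSigma\|_{\rm op}
    &\le\Big\|\tfrac1{|\calS|}{\textstyle\sum_{i\in\calS}}(\bxi_i\bxi_i^\top-\bSigma)\Big\|_{\rm op}+\|\barxi_\calS\|_2^2\\
    &\le\frac{\big\|{\textstyle\sum_{i=1}^n}(\bxi_i\bxi_i^\top-\bSigma)\big\|_{\rm op}+\max_{|J|\le m}\big\|{\textstyle\sum_{i\in J}}(\bxi_i\bxi_i^\top-\bSigma)\big\|_{\rm op}}{n-m}+\|\barxi_\calS\|_2^2 .
\end{align*}
The term $\|\barxi_\calS\|_2^2$ is exactly the last term of the claimed bound, so it only remains to control the two operator-norm quantities in the numerator.

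For the first of them I would apply a Gaussian covariance-concentration inequality that is sharp in the effective rank (the operator-norm counterpart of the quadratic-form bound \cite[Lemma 8]{Comminges} used in \Cref{lem:mean}, and sharper in $\rSigma$ than the $\sqrt p$-type bound \cite[Corollary 5.35]{Vershynin2012IntroductionTT} used elsewhere in the paper): with probability at least $1-e^{-t}$,
\begin{align*}
    \Big\|{\textstyle\sum_{i=1}^n}(\bxi_i\bxi_i^\top-\bSigma)\Big\|_{\rm op}=n\,\|\hat\bSigma_n-\bSigma\|_{\rm op}\le A_0\big(\sqrt{n\rSigma}+\rSigma+t\big),
\end{align*}
which, divided by $n-m$, is already of the requested shape.

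For the second one I would repeat the argument from the proof of \Cref{lem:mean}. For a fixed $J$ with $|J|=s\le m$, the same inequality applied to $s$ vectors gives $\bfP\big(\|\sum_{i\in J}(\bxi_i\bxi_i^\top-\bSigma)\|_{\rm op}\ge B_0(\sqrt{s\rSigma}+\rSigma+u)\big)\le e^{-u}$ for every $u\ge1$. Choosing $u=u_s:=s\log(2ne/s)+t$, so that $\binom ns e^{-u_s}\le 2^{-s}e^{-t}$ (using $\log\binom ns\le s\log(ne/s)$), a union bound over the $\binom ns$ subsets of size $s$ and a sum over $s=1,\dots,m$ yield, with probability at least $1-e^{-t}$,
\begin{align*}
    \max_{|J|\le m}\Big\|{\textstyle\sum_{i\in J}}(\bxi_i\bxi_i^\top-\bSigma)\Big\|_{\rm op}
    &\le B_0\max_{1\le s\le m}\big(\sqrt{s\rSigma}+\rSigma+s\log(2ne/s)+t\big)\\
    &\le B_0\big(\sqrt{n\rSigma}+\rSigma+m\log(2ne/m)+t\big),
\end{align*}
where the last step uses $s\le m\le n$ and the monotonicity of $s\mapsto s\log(2ne/s)$ on $[1,n]$.

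On the intersection of these two events, of probability at least $1-2e^{-t}$, plugging the last two displays into the numerator and absorbing the two copies of $t$ and all absolute constants into one constant $A$ yields $\|\hat\bSigma_\calS-\bSigma\|_{\rm op}\le A(\sqrt{n\rSigma}+\rSigma+m\log(2ne/m)+2t)/(n-m)+\|\barxi_\calS\|_2^2$ simultaneously for every $\calS$ with $|\calS|\ge n-m$, which is the claim. I expect the main obstacle to be the uniform control of the discarded mass, i.e.\ bounding $\|\sum_{i\in J}(\bxi_i\bxi_i^\top-\bSigma)\|_{\rm op}$ \emph{uniformly} over all $|J|\le m$ while paying only a $\sqrt{n\rSigma}+\rSigma$ statistical price and an $m\log(2ne/m)$ combinatorial price; this is the rank-one-matrix analogue of the sum-of-vectors computation already done in \Cref{lem:mean}, the two new features being that the summands $\bxi_i\bxi_i^\top-\bSigma$ are sub-exponential rather than sub-Gaussian (so the tail is driven by the linear term $u$, whence the choice $u_s\asymp s\log(2ne/s)+t$) and that an operator-norm rather than a single quadratic-form estimate is needed, which is what forces the use of an effective-rank-sharp covariance inequality.
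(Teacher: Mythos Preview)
Your proposal is correct and follows essentially the same approach as the paper: the same decomposition $\hat\bSigma_\calS-\bSigma$ into $\|\barxi_\calS\|_2^2$ plus the centered second-moment term, the same splitting $\sum_{i\in\calS}=\sum_{i\in[n]}-\sum_{i\in\calS^c}$, and the same union bound with $u_s=s\log(2ne/s)+t$ over subsets of size $s\le m$. The effective-rank-sharp covariance inequality you invoke is precisely \cite[Theorems 4 and 5]{KoltLoun}, which is what the paper uses at the corresponding step.
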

\begin{proof}
The triangle inequality implies
\begin{align}
    \|\hat{\bSigma}_{\calS} - \bSigma\|_{\rm op}&\leq \Big\|\frac{1}{|\calS|}\sum_{i\in\calS}(\bX_{i}-\bmu)^{\otimes 2}-\bSigma\Big\|_{\rm op}+\big\|(\bmu-\barx_\calS)^{\otimes 2}\big\|_{\rm op}\\
    &=\Big\|\frac{1}{|\calS|}\sum_{i\in\calS}\bxi_i\bxi_i^\top-\bSigma\Big\|_{\rm op} + \big\|\barxi_\calS\big\|_2^2.
    \label{triangle}
\end{align}
Using again the triangle inequality, one gets
\begin{align}
    \Big\|\sum_{i\in\calS}\bxi_i\bxi_i^\top-\bSigma\Big\|_{\rm op}&\leq\Big\|\sum_{i\in\calS}\big(\bxi_i\bxi_i^\top-\bSigma\big)\Big\|_{\rm op}\\
    &\leq\Big\|\sum_{i=1}^n\big(\bxi_i\bxi_i^\top-\bSigma\big)\Big\|_{\rm op}+\Big\|\sum_{i\in\calS^C}\big(\bxi_i\bxi_i^\top-\bSigma\big)\Big\|_{\rm op}\\
    &\leq\Big\|\sum_{i=1}^n\big(\bxi_i\bxi_i^\top-\bSigma\big)\Big\|_{\rm op} + \max_{|J|\leq m}\Big\|\sum_{i\in J}\big(\bxi_i\bxi_i^\top-\bSigma\big)\Big\|_{\rm op}.
    \label{sigma}
\end{align}
In view of \cite[Theorems 4 and 5]{KoltLoun},
one can show that there exists a positive universal
constant $A_1$ such that for every $t\geq1$ and
every set $J$ of cardinality $s$, the inequality
\begin{align}
    \Big\|\sum_{j\in J}\bxi_j\bxi_j^\top-s\bSigma
    \Big\|_{\rm op}\leq A_1(\sqrt{m\rSigma}+\rSigma + t)
\end{align}
is satisfied with probability at least $1-e^{-t}$.
We define $t_s$ by
\begin{align}
    t_s = s\log\Big(\frac{2ne}{s}\Big) + t,
\end{align}
so that $t_s\le t_m = m\log\big(\frac{2ne}{m}\big) + t$ and
\begin{align}
    \Big(\frac{ne}{s}\Big)^s e^{-t_s} \leq 2^{-s}e^{-t}.
\end{align}
Applying the union bound and the well-known upper bound
on the binomial coefficients, this yields
\begin{align}
    \bfP\Big(\max_{|J|\leq m}\Big\|\sum_{j\in J}\big(\bxi_j\bxi_j^\top&-\bSigma\big)\Big\|_{\rm op}\geq A_1(\sqrt{m\rSigma}+\rSigma + t_m) \Big)\\
    &\leq\sum_{s=1}^m\binom{n}{s}\bfP\Big(\Big\|\sum_{j=1}^s\bxi_j\bxi_j^\top-s\bSigma\Big\|_{\rm op}\geq A_1(\sqrt{m\rSigma}+\rSigma + t_s)\Big)\\
    &\leq\sum_{s=1}^m\Big(\frac{ne}{s}\Big)^s e^{-t_s}\leq e^{-t}.
\end{align}
One deduces from \eqref{sigma} that, with probability at least $1-2e^{-t}$,
\begin{align}
    \Big\|\frac{1}{|\calS|}\sum_{i\in\calS}\bxi_i\bxi_i^\top
    -\bSigma\Big\|_{\rm op}
    &\leq A_1\frac{(\sqrt{n}+\sqrt{m})\sqrt{\rSigma} + 2\rSigma + m\log(2ne/m)+2t}{n-m}\\
    &\leq A\frac{\sqrt{n\rSigma} + \rSigma + m\log(2ne/m)+2t}{n-m}.
\end{align}
Combining this with \eqref{triangle}, one gets the claim of the lemma.
\end{proof}

\begin{lemma}\label{lem:sigma_p}
For any positive integer $m\le n$ and any $t>0$, with probability
at least $1-4e^{-t}$, the inequality
\begin{align}
    \|\hat{\bSigma}_{\calS} - \bSigma\|_{\rm op}&\leq \frac{5p+\big(8\log(2ne/m)+2\big)m + 7t}{n-m} + 2\frac{\sqrt{p}+\sqrt{t}}{\sqrt{n}-\sqrt{m}}+ \big\|\barxi_\calS\big\|_2^2
\end{align}
is satisfied for every $\calS\subset [n]$ of cardinality
$\geq n-m$.
\end{lemma}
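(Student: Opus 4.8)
The plan is to retrace the proof of \Cref{lem:sigmaTr2}, replacing the Koltchinskii--Lounici inequality by the sharp Gaussian singular-value bound so that the numerical constants stay explicit. As in \eqref{triangle}, writing $\bX_i-\bmu^*=\bxi_i$ for $i\in\calS$ and expanding $\hat\bSigma_\calS$ around $\bmu^*$ gives
\begin{align}
    \|\hat\bSigma_\calS-\bSigma\|_{\rm op}
    \;\le\; \Big\|\frac1{|\calS|}\sum_{i\in\calS}\bxi_i\bxi_i^\top-\bSigma\Big\|_{\rm op}+\|\barxi_\calS\|_2^2 ,
\end{align}
so it suffices to bound the first term on the right, uniformly over all $\calS\subset[n]$ with $|\calS|\ge n-m$, by the three fractions appearing in the statement.

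Write $\bxi_i=\bSigma^{1/2}\boldsymbol g_i$ with $\boldsymbol g_i$ i.i.d.\ $\mathcal N_p(\mathbf 0,\bfI_p)$ and set $\mathbf G_J=[\boldsymbol g_i:i\in J]$, so that $\sum_{i\in J}\bxi_i\bxi_i^\top-|J|\bSigma=\bSigma^{1/2}(\mathbf G_J\mathbf G_J^\top-|J|\bfI_p)\bSigma^{1/2}$ and, since $\|\bSigma\|_{\rm op}=1$,
\[
    \Big\|\frac1{|\calS|}\sum_{i\in\calS}\bxi_i\bxi_i^\top-\bSigma\Big\|_{\rm op}\;\le\;\frac1{n-m}\big\|\mathbf G_\calS\mathbf G_\calS^\top-|\calS|\bfI_p\big\|_{\rm op}.
\]
Because $\sum_{i\in\calS}\boldsymbol g_i\boldsymbol g_i^\top\preceq\sum_{i=1}^n\boldsymbol g_i\boldsymbol g_i^\top$ one has $s_{\max}(\mathbf G_\calS)^2-|\calS|\le s_{\max}(\mathbf G_n)^2-(n-m)$, and Weyl's inequality applied to the splitting $\sum_{i=1}^n=\sum_{i\in\calS}+\sum_{i\in\calS^c}$ gives $|\calS|-s_{\min}(\mathbf G_\calS)^2\le n-s_{\min}(\mathbf G_n)^2+\max_{|J|\le m}s_{\max}(\mathbf G_J)^2$. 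Hence $\|\mathbf G_\calS\mathbf G_\calS^\top-|\calS|\bfI_p\|_{\rm op}$ is dominated, for every admissible $\calS$, by $m$ plus the three scalars $s_{\max}(\mathbf G_n)^2-n$, $\;n-s_{\min}(\mathbf G_n)^2$ and $\max_{|J|\le m}s_{\max}(\mathbf G_J)^2$.

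These three scalars are controlled by standard Gaussian concentration. For $\mathbf G_n$ I would invoke the two-sided bound (\Cref{lem:vershynin:bis} with $\mathfrak s=1$, equivalently \citep[Corollary 5.35]{Vershynin2012IntroductionTT}) at confidence $1-2e^{-t}$, giving $\sqrt n-\sqrt p-\sqrt{2t}\le s_{\min}(\mathbf G_n)\le s_{\max}(\mathbf G_n)\le\sqrt n+\sqrt p+\sqrt{2t}$, hence both $s_{\max}(\mathbf G_n)^2-n$ and $n-s_{\min}(\mathbf G_n)^2$ are at most $2\sqrt n(\sqrt p+\sqrt{2t})+(\sqrt p+\sqrt{2t})^2$. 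For the small-set term I would apply the same estimate to each $\mathbf G_J$ with $|J|=s$ at level $1-2e^{-u_s^2/2}$, $u_s=\sqrt{2s\log(2ne/s)+2t}$, and union-bound over the $\binom ns\le(ne/s)^s$ choices of $J$ and over $s\le m$; monotonicity of $s\mapsto s\log(2ne/s)$ on $[1,n]$ and $\sum_{s\ge1}2^{-s}\le1$ then yield, with probability at least $1-2e^{-t}$, $\max_{|J|\le m}s_{\max}(\mathbf G_J)^2\le(\sqrt m+\sqrt p+u_m)^2$ with $u_m=\sqrt{2m\log(2ne/m)+2t}$.

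On the intersection of the two events, of probability at least $1-4e^{-t}$, one substitutes these estimates back through the reductions above and simplifies: $2\sqrt{ab}\le a+b$ linearises the cross terms $\sqrt{pt},\sqrt{mt},\sqrt{mp}$, the inequality $2\sqrt2\sqrt x\le x+2$ turns $2\sqrt2\,m\sqrt{\log(2ne/m)}$ into $m\log(2ne/m)+2m$, and $\sqrt n/(n-m)\le(\sqrt n-\sqrt m)^{-1}$ (with $m\le n$) converts the term $2\sqrt n(\sqrt p+\sqrt{2t})/(n-m)$ into the summand $2(\sqrt p+\sqrt t)/(\sqrt n-\sqrt m)$. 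The probabilistic part is routine; the one genuinely delicate point is this final bookkeeping, where the elementary splittings must be arranged so that every leftover fits inside the prescribed coefficients $5$, $8$, $2$ and $7$ of the statement. In particular, keeping the coefficient of $\sqrt p$ equal to one in the singular-value bound (using the constant-free Corollary 5.35 rather than the generic sub-Gaussian form with $\sfC_0=\sqrt2$) is what makes the factor $2$ in $2(\sqrt p+\sqrt t)/(\sqrt n-\sqrt m)$ reachable, so I would rely on that sharp version throughout.
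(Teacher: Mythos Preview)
Your proposal is correct and follows essentially the same route as the paper: both reduce to the whitened Gram matrix via $\|\bSigma\|_{\rm op}=1$, control the full-sample piece by Corollary~5.35 of \citep{Vershynin2012IntroductionTT} at level $2e^{-t}$, handle the small-set piece by a union bound over $|J|\le m$ with confidence parameters $u_s^2/2=s\log(2ne/s)+t$, and then perform the same delicate constant bookkeeping using $2\sqrt{ab}\le a+b$ and $(n-m)^{-1}\sqrt n\le(\sqrt n-\sqrt m)^{-1}$.

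The one structural difference is how the subset $\calS$ is related to $[n]$. The paper applies the triangle inequality to the \emph{centered} sums, writing $\sum_{i\in\calS}(\bzeta_i\bzeta_i^\top-\bfI_p)=\sum_{i=1}^n(\cdot)-\sum_{i\in\calS^c}(\cdot)$ and bounding the operator norm of each piece separately. You instead treat the two extreme singular values of $\mathbf G_\calS$ by distinct arguments: PSD monotonicity $\mathbf G_\calS\mathbf G_\calS^\top\preceq\mathbf G_n\mathbf G_n^\top$ for $s_{\max}$, and Weyl's inequality for $s_{\min}$. Consequently your small-set term is the uncentered $s_{\max}(\mathbf G_J)^2$ rather than the paper's centered $\|\mathbf G_J\mathbf G_J^\top-|J|\bfI_p\|_{\rm op}$; this costs at most an additive $m$, which is harmlessly absorbed into the $2m$ coefficient. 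Both decompositions feed into identical concentration ingredients and the same final simplification, so the difference is cosmetic.
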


\begin{proof}
    In this proof, without loss of generality, we assume that the matrix $\bSigma$ is invertible.
    In view of \eqref{triangle} and \eqref{sigma}, we have
    \begin{align}\label{eq:sigmaop}
        \|\hat{\bSigma}_{\calS} - \bSigma\|_{\rm op}\leq \Big\|\frac{1}{|\calS|}\sum_{i=1}^n\big(\bxi_i\bxi_i^\top
        -\bSigma\big)\Big\|_{\rm op} + \max_{|J|\leq m} \Big\|
        \frac{1}{|\calS|} \sum_{i\in J}\big(\bxi_i\bxi_i^\top -
        \bSigma\big)\Big\|_{\rm op}+ \big\|\barxi_\calS\big\|_2^2.
    \end{align}
    Let us define $\bzeta_i:=\bSigma^{-1/2}\bxi_i$ for all $i\in[n]$.
    For every set $J$ of cardinality $s$, it holds that
    \begin{align}
        \Big\|\sum_{i\in J}\big(\bxi_i\bxi_i^\top-\bSigma\big)\Big\|_{\rm op}&\leq\big\|\bSigma\big\|_{\rm op}\Big\|\sum_{i\in J}\big(\bzeta_i\bzeta_i^\top-\bfI_p\big)\Big\|_{\rm op}\\
        &=\max\Big(\lambda_{\max}\big(\sum_{i\in J}\bzeta_i\bzeta_i^\top\big)-s,s-\lambda_{\min}\big(\sum_{i\in J}\bzeta_i\bzeta_i^\top\big)\Big)\\
        &=\max\Big(\sigma^2_{\max}\big(\bzeta_{J}\big)-s,
        s-\sigma^2_{\min}\big(\bzeta_{J}\big)\Big),
    \end{align}
    where $\bzeta_{J}$ is the $s\times n$ random matrix
    obtained by concatenating the vectors $\bzeta_i$ with
    $i\in J$. By \cite[Corollary 5.35]{Vershynin2012IntroductionTT},
    we know that for every $x>0$
    \begin{align}
        \sqrt{s}-\sqrt{p}-x\leq \sigma_{\min}(\bzeta_{J})\leq \sigma_{\max}(\bzeta_{J})\leq\sqrt{s}+\sqrt{p}+x
    \end{align}
    is true with probability at least $1-2e^{-x^2/2}$.
    This yields\footnote{We provide the argument only in the case $\sqrt{s}\ge \sqrt{p} + x$, but the conclusion is true for every value $s$.}
    \begin{align}
        \Big\|\sum_{i\in J}\big(\bxi_i\bxi_i^\top-\bSigma\big)\Big\|_{\rm op}&\leq
        \max\big((\sqrt{p}+x)(2\sqrt{s}+\sqrt{p}+x),(\sqrt{p}+x)(2\sqrt{s}-\sqrt{p}-x)\big)\\
        &\leq p+x^2+2\sqrt{ps}+2x\sqrt{p}+2x\sqrt{s}
    \end{align}
    with probability at least $1-2e^{-x^2/2}$. By applying the same
    technique as in the proof of \Cref{lem:sigmaTr2}, we can set
    \begin{align}
    t_s = 2\sqrt{s\log\Big(\frac{2ne}{s}\Big) + t},
    \end{align}
    and obtain
    \begin{align}
    \bfP\Big(\max_{|J|\leq m}\Big\|\sum_{j\in J}\big(\bxi_j\bxi_j^\top&-\bSigma\big)\Big\|_{\rm op}\geq p+t_m^2+2\sqrt{pm}+2t_m\sqrt{p}+2t_m\sqrt{m} \Big)\leq 2e^{-t}.
\end{align}
    Hence, going back to \eqref{eq:sigmaop}, we can show that the inequalities
    \begin{align}
        \|\hat{\bSigma}_{\calS} - \bSigma\|_{\rm op}&\leq\frac{p+t+2\sqrt{pn}+2\sqrt{tp}+2\sqrt{tn}}{n-m}+\frac{p+4t+4m\log(2ne/m)+2\sqrt{pm}}{n-m}
        \\
        &\quad+\frac{4(\sqrt{p}+\sqrt{m})\sqrt{m\log(2ne/m)+t}}{n-m}+\big\|\barxi_\calS\big\|_2^2\\
        &\leq \frac{5p+8m\log(2ne/m) + 2m + 7t}{n-m} + \frac{2(\sqrt{p} +
        \sqrt{t})(\sqrt{n}+\sqrt{m})}{n-m}+\big\|\barxi_\calS\big\|_2^2
    \end{align}
    hold with probability at least $1-4e^{-t}$, and this proves the lemma.
\end{proof}

\subsection{Putting all the pieces together}
All the ingredients provided, we can now compile the complete proof of \Cref{thm:1}.

Taking $\bfU_L:=\bfV_L$, the algorithm detailed in \eqref{alg:3} returns $\bmuSDR=\sum_{\ell=0}^L\hat\bmu^{(\ell)}$ with $\hat\bmu^{(\ell)}\in\mathscr U_\ell={\rm Im}(\bfV_\ell\bfU_\ell^\top)$ for every $\ell\in\{0,\ldots,L\}$ where the two-by-two orthogonal subspaces $\mathscr U_0,\ldots,\mathscr U_L$ span the whole space $\mathbb{R}^p$.
This allows us to decompose the risk:
\begin{align}
    \big\|\bmuSDR - \bmu^*\big\|_2^2 &= \sum_{\ell=0}^L
    \big\|\hat\bmu^{(\ell)} - \Proj_{\mathscr U_\ell}
    \bmu^* \big\|_2^2 = \sum_{\ell=0}^L \big\|
    \Proj_{\mathscr U_\ell}(\barx_{\ell} - \bmu^*)
    \big\|_2^2\\
    &=\sum_{\ell=0}^L\big\| \Proj_\ell (\bfV_{\ell}^\top
    \barx_{\ell}  - \bfV_{\ell}^\top\bmu^*)\big\|_2^2,
\end{align}
where $\Proj_\ell:=\bfU_{\ell}^\top\bfU_\ell$ is the projection matrix projecting onto the subspace of $\mathbb{R}^{p_{\ell}}$ spanned by the bottom $p_\ell - {p_{\ell+1}}$ eigenvectors of $\bfV_\ell^\top(\hat\bSigma{}^{(\ell)} - \bSigma)\bfV_\ell$ for $\ell=0,\ldots,L$ with the convention that $p_{L+1}=0$.

For $\ell\in\{0,\ldots,L-1\}$, we intend to apply
\Cref{prop:projectedmean} to $\bZ_i = \bfV_{\ell}^\top\bX_i$
and $\bmu^Z = \bfV_{\ell}^\top\bmu^*$ in order to upper bound
the error term $\Err_\ell := \|\Proj_\ell(\bfV_{\ell}^\top
\barx_{\ell}  - \bfV_{\ell}^\top \bmu^* )\big\|_2$. Using
the inequalities
\begin{align}
    \|\bfV^\top
    (\hat\bSigma{}^{(\ell)} - \bSigma)\bfV\|_{\rm op} \le
    \|\hat\bSigma{}^{(\ell)} - \bSigma\|_{\rm op},
    \quad
    \lambda_{p_\ell} (\bfV^\top \bSigma \bfV)
    \le \lambda_{p} (\bSigma),
    \quad
    \lambda_{1} (\bfV^\top \bSigma \bfV)
    \ge \lambda_{1} (\bSigma)
\end{align}
that are true for every orthogonal matrix $\bV$,
and keeping in mind the definition of $\Proj_\ell$,
we get
\begin{align}
    \Err_\ell &\le
    \bigg\{2\omega_\calO \|
    \hat\bSigma{}^{(\ell)} - \bSigma\|_{\rm op}
    + \frac{\omega_\calO^2}{1-\omega_\calO}
    \bigg((\lambda_{p}-\lambda_1)(\bSigma) + \frac{\delta_\ell^2}{p_{\ell+1}}\bigg)
    \bigg\}^{1/2} + \big\|   \Proj_\ell\bfV_\ell^\top
    \barxi_{\calS^{(\ell)}_\calI }\big\|_2,
\end{align}
where we have used the notation
\begin{align}
    \omega_{\calO} = \max_{\ell} \frac{|\calS^{(\ell)}\cap\calO|}
    {|\calS^{(\ell)}|},\qquad
    \bxi_i = \bX_i -\bmu^*
\end{align}
and
$\delta_\ell = \inf_{\bmu}
\max_{i\in\calS^{(\ell)}} \|\bfV_\ell^\top (\bX_i -
\bmu)\|_2$. Note that when $\calO$ and $(\calS^{(\ell)
}\cap \calI)^c$ are of cardinality less than $n\varepsilon$
and $n(\varepsilon + \tau)$, respectively, we have
\begin{align}
    \frac{|\calS^{(\ell)}|}{|\calS^{(\ell)}\cap \calO|}
    = \frac{|\calS^{(\ell)}\cap \calI| + |\calS^{(\ell)}
    \cap \calO| }{|\calS^{(\ell)}\cap \calO|} =
    \frac{|\calS^{(\ell)}\cap \calI|}{|\calS^{(\ell)}\cap \calO|} + 1
    \ge \frac{n(1-\varepsilon-\tau)}{n\varepsilon} +1
    = \frac{1-\tau}{\varepsilon}
\end{align}
and, therefore, $\omega_{\calO}\le \varepsilon/(1-\tau)$. We set
$\eta := \varepsilon + \tau\le 3/4$ and apply
\Cref{lem:filtercontamine} to infer that $\omega_{\calO} \leq
\omega_{\calO}/(1-\omega_{\calO}) \le \varepsilon/(1-\eta)\le
 4\varepsilon$ is true with probability
at least $1-\delta$. Furthermore, we know that $\delta_\ell\leq\max_{i\in\calS^{(\ell)}}\|\bfV_\ell^\top\bX_i
- \bar\bmu^{(\ell)}\|_2\le t\sqrt{p_\ell}$. This yields
\begin{align}
    \Err_\ell &\leq \Big\{8\varepsilon
    \|\hat\bSigma{}^{(\ell)} - \bSigma\|_{\rm op}
    + 16\varepsilon^2 \Big((\lambda_{p}-\lambda_1) (\bSigma)
    + \frac{t^2p_{\ell}}{p_{\ell+1}}\Big)\Big\}^{1/2}
    + \big\|\Proj_{\mathscr U_\ell}\barxi_{\calS^{(\ell)
    }_\calI}\big\|_2.
\end{align}
Let us introduce the shorthand
\begin{align}
    T_1 =  \max_{\ell\in[L]}
    \|\hat\bSigma{}^{(\ell)} - \bSigma\|_{\rm op}
    + \varepsilon (\lambda_{p}-\lambda_1) (\bSigma).
\end{align}
This leads to
\begin{align}\label{Err:l}
    \Err_\ell &\leq \Big\{8\varepsilon T_1 +
    \frac{16\varepsilon^2 t^2p_{\ell}}{p_{\ell+1}}
    \Big\}^{1/2} + \| \Proj_{\mathscr U_\ell}
    \barxi_{\calS^{(\ell) }_\calI}\big\|_2.
\end{align}
For the last error term, since $p_L=1$ we have by \Cref{lem:lowdim}
\begin{align}
  \Err_L&\le \big\|\Proj_{\mathscr U_L} \bar\bxi_{\calS_\calI^{(L)}} \big\|_2 + \frac{n
    \varepsilon (t\sqrt{p_L} + \big\|\Proj_{\mathscr U_L}\bmu^* - \med_{\mathscr U_L}\big\|_2)}{
    |\mathcal S^{(L)}|}\\
    &\le \big\|\Proj_{\mathscr U_L} \bar\bxi_{\calS_\calI^{(L)}} \big\|_2 + \frac{
    \varepsilon t + \varepsilon\big\|\Proj_{\mathscr U_L}\bmu^* - \med_{\mathscr U_L}\big\|_2}{1-\eta}\\
    &\le \big\|\Proj_{\mathscr U_L} \bar\bxi_{\calS_\calI^{(L)}} \big\|_2 +
    4\varepsilon t + 4\varepsilon\big\|\Proj_{\mathscr U_L}\bmu^* -\med_{\mathscr U_L}\big\|_2.\label{Err:L}
\end{align}
Combining \eqref{Err:l}, \eqref{Err:L}, inequality
$p_\ell\le e p_{\ell+1}$, as well as
the Minkowski inequality, we get
\begin{align}
    \big\|\bmu^* &- \bmuSDR\big\|_2  = \bigg\{\sum_{\ell=0}^L
    \Err_\ell^2\bigg\}^{1/2}\\
    &\le \bigg\{ 8\varepsilon L ( T_1 + e \varepsilon
    t^2) + 16\varepsilon^2\big(t + \big\|\Proj_{\mathscr
    U_L}\bmu^* -\med_{\mathscr U_L}\big\|_2\big)^2\bigg\}^{1/2}
    + \bigg\{\sum_{\ell=0}^L \| \Proj_{\mathscr U_\ell}
    \barxi_{\calS^{(\ell) }_\calI}\big\|_2^2\bigg\}^{1/2}\\
    &\le 2\sqrt{2\varepsilon L T_1} + 9\varepsilon t
    \sqrt{L} + 4\varepsilon \big\|\Proj_{\mathscr
    U_L}\bmu^* -\med_{\mathscr U_L}\big\|_2 +
    \bigg\{\sum_{\ell=0}^L \| \Proj_{\mathscr U_\ell}
    \barxi_{\calS^{(\ell) }_\calI}\big\|_2^2\bigg\}^{1/2}\!\!
    \!\!.
    \label{eq:14}
\end{align}
To ease notation, %for some $\delta\in(0,1)$,
let us set
\begin{align}
    {\sf r}_n = \Big(\frac{2\rSigma+3\log(2/\delta)}{n}\Big)^{1/2}.
\end{align}
In view of \Cref{lem:mean}, with
probability at least $1-\delta$, we have
\begin{align}
    \bigg\{\sum_{\ell=0}^L \| \Proj_{\mathscr U_\ell}
    \barxi_{\calS^{(\ell) }_\calI}\big\|_2^2\bigg\}^{1/2}
    &\le
    \bigg\{\sum_{\ell=0}^L \bigg( \frac{\|\Proj_{\mathscr U_\ell}\bar\bxi_n
    \|_2}{1 - \eta}  +  \frac{{\sf r}_n\sqrt{\eta} + \eta\sqrt{3 \log(2e/\eta)}}{1-\eta}\bigg)^2\bigg\}^{1/2}\\
    &\le \bigg\{\sum_{\ell=0}^L \big( 4\|\Proj_{\mathscr U_\ell}
    \bar\bxi_n \|_2  +  4{\sf r}_n\sqrt{\eta} + 10\eta\sqrt{
    \log(2/\eta)}\big)^2\bigg\}^{1/2}\\
    &\le 4\|\bar\bxi_n \|_2 + 4{\sf r}_n\sqrt{\eta L} +
    10\eta\sqrt{L\log(2/\eta)}.
\end{align}
Since the random variable $\|\barxi_n\|_2^2$ has the same distribution as $\frac1n\sum_{j=1}^p\lambda_j(\bSigma)\gamma_j^2$, where $\gamma_1,\dots,\gamma_p$ are i.i.d. standard Gaussian, by \cite[Lemma 8]{Comminges} we have
\begin{align}
    \|\barxi_n\|_2^2\le\frac{2\rSigma+3\log(2/\delta)}{n}
    ={\sf r}_n^2
\end{align}
with probability at least $1-\delta$. Therefore, with
probability at least $1-2\delta$,
\begin{align}
    \bigg(\sum_{\ell=0}^L \| \Proj_{\mathscr U_\ell}
    \barxi_{\calS^{(\ell) }_\calI}\big\|_2^2\bigg)^{1/2}
    &\le  4{\sf r}_n \big(1 + \sqrt{L\eta}\big) + 10\eta
    \sqrt{L\log(2/\eta)}.\label{eq:14bis}
\end{align}
Next, \Cref{lem:filtercontamine} and the fact that
$p_L = \dim(\mathscr U_L) = 1$  imply that with probability at least $1-\delta$
\begin{align}
    \big\|\Proj_{\mathscr
    U_L}\bmu^* -\med_{\mathscr U_L}\big\|_2 \le
    \frac{2(1+ \sqrt2{\sf r}_n)}{1-2\varepsilon}.\label{eq:15}
\end{align}
Recall that we have chosen $t$ in such a way that
\begin{align}
    t \le \frac{3(1 + \sqrt2{\sf r}_n/\sqrt{\tau})}{1-2\varepsilon^*}
        +  1.6\sqrt{\log(2/{{\tau}})}.\label{eq:16}
\end{align}
Combining \eqref{eq:14}, \eqref{eq:14bis}, \eqref{eq:15} and \eqref{eq:16},
we arrive at the inequality
\begin{align}
    \big\|\bmuSDR - \bmu^*\big\|_2
    &\le 2\sqrt{2\varepsilon L T_1} + 9\varepsilon t
    \sqrt{L} +  \frac{8\varepsilon(1+\sqrt{2}{\sf r}_n)}{1-2\varepsilon}
    + 4{\sf r}_n \big(1 + \sqrt{L\eta}\big) + 10\eta
    \sqrt{L\log(2/\eta)}\\
    &\le 2\sqrt{2\varepsilon L T_1} +
    \frac{27\varepsilon \sqrt{L}(1 + \sqrt2{\sf r}_n/\sqrt{\tau})}{1 -
    2\varepsilon^*} + 14.4\varepsilon \sqrt{L\log(2/\tau)} \\
    &\quad +  \frac{8\varepsilon(1+\sqrt{2}{\sf r}_n)}{1-2\varepsilon}
    + 4{\sf r}_n \big(1 + \sqrt{L\eta}\big) + 10\eta
    \sqrt{L\log(2/\eta)}
\end{align}
that holds with probability at least $1-3\delta$. In the
upper bound obtained above, only the term $T_1$ remains
random. We can upper bound this term using
\Cref{lem:sigmaTr2}. It implies that with probability
at least $1-2\delta$, we have
\begin{align}
    T_1 &\le  A\frac{\sqrt{n\rSigma} + \rSigma + n\eta
    \log(2e/\eta) + 2\log(1/\delta)}{n(1-\eta)} + \big(
    4{\sf r}_n(1+\sqrt{\eta}) + 10\eta\sqrt{\log(2/\eta)}
    \big)^2 + \varepsilon\\
    &\le 2A\big(\sqrt{2}{\sf r}_n + {\sf r}_n^2  + 4\eta
    \log(2/\eta)\big) + \big (7.5{\sf r}_n + 10\eta
    \sqrt{\log(2/\eta)}\big)^2 + \varepsilon.
\end{align}
Consequently,
\begin{align}
    \sqrt{\varepsilon T_1} &\le  \big\{2A\varepsilon
    \big(\sqrt{2}{\sf r}_n + {\sf r}_n^2  + 4\eta \log
    (2/\eta) \big)\big\}^{1/2} + \big (7.5{\sf r}_n +
    10\eta \sqrt{\log(2/\eta)}\big)\sqrt{\varepsilon} +
    \varepsilon\\
    & \le  \big\{2A\varepsilon \big(\sqrt{2}{\sf r}_n +
    {\sf r}_n^2  + 4\eta \log(2/\eta) \big)
    \big\}^{1/2} + 5.4{\sf r}_n + 7.1\eta \sqrt{\log(2/\eta)}
    + \varepsilon\\
    & \le \big\{2A\varepsilon \big(\sqrt{2}{\sf r}_n +
    4\eta \log(2/\eta) \big) \big\}^{1/2}
    + (\sqrt{A} + 5.4){\sf r}_n + 7.1\eta\sqrt{\log(2/\eta)}
    + \varepsilon\\
    & \le  \sqrt{2\sqrt{2}A{\sf r}_n\varepsilon} +
    2\sqrt{2A\varepsilon\eta\log(2/\eta)} + (\sqrt{A} + 5.4){\sf r}_n +
    7.1\eta\sqrt{\log(2/\eta)} + \varepsilon\\
    & \le  \varepsilon + A{\sf r}_n/\sqrt{2}  +
    2\eta\sqrt{2A\log(2/\eta)} + (\sqrt{A} + 5.4){\sf r}_n +
    7.1\eta\sqrt{\log(2/\eta)} + \varepsilon\\
    & \le (A/\sqrt{2} + \sqrt{A} + 5.4){\sf r}_n+ (7.1 + 2\sqrt{2A})
    \tau \sqrt{\log(2/\tau)} + (9.1 + 2\sqrt{2A})\varepsilon\sqrt{
    \log(2/\varepsilon)}.
\end{align}
These inequalities imply that there exists a universal
constant $\sfC$ such that
\begin{align}\label{eq:18a}
    \big\|\bmuSDR - \bmu^*\big\|_2
    & \le \frac{\sfC \big( {\sf r}_n + \tau\sqrt{\log(2/\tau)}
    + \varepsilon\sqrt{\log(2/\varepsilon)} + {\sf r}_n
    \varepsilon/\sqrt{\tau} \big) \sqrt{L}}{1-2\varepsilon^*}.
\end{align}
We choose
\begin{align}
    \tau = \frac14\bigwedge \frac{\bar{\sf r}_n}{\sqrt{\log_+(2/\bar{\sf r}_n)}}, \qquad\text{with}\qquad
    \bar{\sf r}_n = \frac{\sqrt{\rSigma} +
    \sqrt{2 \log( 2/\delta)}}{\sqrt{n}}.
\end{align}
Note that ${\sf r}_n \le \sqrt{2} \bar{\sf r}_n$ and,
furthermore, $\tau = 1/4$ whenever
$\bar{\sf r}_n \ge 1/2$. Therefore,
${\sf r}_n  \varepsilon/\sqrt{\tau}\le {\sf r}_n + \varepsilon$.
Inserting this value of $\tau$ in \eqref{eq:18a} leads to
\begin{align}
    \big\|\bmuSDR - \bmu^*\big\|_2
    &\le \frac{\sfC \big( {\sf r}_n + \varepsilon\sqrt{\log(2/
    \varepsilon)}\big) \sqrt{L}}{1-2\varepsilon^*}.
\end{align}
where $\sfC$ is a universal constant, the value of which
is not necessarily the same in different places where it
appears. Replacing ${\sf r}_n$ by its expression, and
upper bounding $L$ by $2\log p$,  we arrive at
\begin{align}\label{eq:17}
    \big\|\bmuSDR - \bmu^*\big\|_2
    &\le \frac{\sfC\, \sqrt{\log p}}{1-2\varepsilon^*}
    \bigg( \sqrt{\frac{\rSigma}{n}}+ \varepsilon\sqrt{
    \log(2/\varepsilon)} + \sqrt{\frac{\log(1/\delta)}{n}}
    \bigg).
\end{align}
Note that this inequality holds true on an event of probability
at least $1-5\delta$.

To prove the second part of the theorem, we use \Cref{lem:sigma_p} instead of \Cref{lem:sigmaTr2} in order to bound the term $T_1$. Moreover, in the definitions of ${\sf r}_n$ and $\bar{\sf r}_n$ the effective rank $\rSigma$ is replaced by the dimension $p$. Then, with probability at least $1-2\delta$, we have
\begin{align}
    T_1 &\le \frac{5p + n \eta(8\log(2e/\eta) + 2) + 7\log(2/\delta)}{n(1-\eta)} + 2\frac{\sqrt{p} + \sqrt{\log(2/\delta)}}{\sqrt{n}(1-\sqrt{\eta})} + \\
    &+ \big(
    4{\sf r}_n(1+\sqrt{\eta}) + 10\eta\sqrt{\log(2/\eta)}
    \big)^2 + \varepsilon \\
    &\le (10{\sf r}_n^2 + 15{\sf r}_n + 72\eta\log(2/\eta)) + \big (7.5{\sf r}_n + 10\eta
    \sqrt{\log(2/\eta)}\big)^2 + \varepsilon.
\end{align}
Then, repeating the same steps as for the previous case where the effective rank is used instead of dimension we arrive at the following inequality
% which holds with probability at least $1-2\delta$

\begin{align}
    \sqrt{\varepsilon T_1} &\le \big\{\varepsilon(10{\sf r}_n^2 + 15{\sf r}_n + 72\eta\log(2/\eta))\big\}^{1/2} + \big (7.5{\sf r}_n + 10\eta
    \sqrt{\log(2/\eta)}\big)\sqrt{\varepsilon} + \varepsilon\\
    &\le 12 {\sf r}_n + 16 \tau \sqrt{\log(2/\tau)} + 18 \varepsilon \sqrt{\log(2/\varepsilon)}.
\end{align}

Combining the obtained inequalities, plugging in the values of
$\tau$ and ${\sf r}_n$ and bounding $L$ by $2\log p$ we arrive
at a final bound which reads as
\begin{align}
    \big\|\bmuSDR - \bmu^*\big\|_2
    &\le \frac{156\, \sqrt{2\log p}}{1-2\varepsilon^*}
    \bigg( \sqrt{\frac{2p}{n}}+ \varepsilon\sqrt{
    \log(2/\varepsilon)} + \sqrt{\frac{3\log(2/\delta)}{n}}
    \bigg),
\end{align}
which concludes the proof.

\section{Proof of \Cref{thm:3}}
The proof follows the same steps as that
of \Cref{thm:1}. The assumption $\|\bSigma^{-1/2}\btSigma\bSigma^{-1/2} - \bfI_p\|_{\rm op}
\le \gamma $ gives upper and lower
bounds on the effective rank of $\btSigma$ using that
of $\bSigma$, which we formulate as a separate lemma.
% , namely $\rtSigma / \rSigma \in [\sfC_\gamma^{-1}, \sfC_\gamma]$.
Therefore, the choice of the threshold
parameter $\tilde{t}_\gamma$ stated in \Cref{thm:3} makes
Lemmas \ref{lem:filternoncontamine} and
\ref{lem:filtercontamine} applicable to this case as well.
To bound the operator norm of $\|\hat\bSigma{}^{(\ell)} -
\btSigma \|_{\textup{op}}$ we make use of the assumption
$\|\bSigma^{-1/2}\btSigma\bSigma^{-1/2} - \bfI_p\|_{\rm op}
\le \gamma $ and \Cref{lem:sigmaTr2} using triangle
inequality. We provide the full proof for reader's
convenience.

Before proceeding with the proof we first formulate and prove an auxiliary lemma for bounding the effective rank of $\btSigma$ using that of $\bSigma$.
\begin{lemma}\label{lem:eff-rank}
Let $\bSigma$ and $\btSigma$ be symmetric positive definite matrices such that $\| \bSigma^{-1/2}\btSigma \bSigma^{-1/2} - \bfI_p \|_{\textup{op}} \le \gamma$. Then,
\begin{align}
    \rSigma \cdot \frac{1-\gamma}{1+\gamma} \le \rtSigma \le  \rSigma \cdot \frac{1+\gamma}{1-\gamma},
\end{align}
where by $\rSigma$ we denote the effective rank of matrix $\bSigma$, i.e. $\rSigma = \tr(\bSigma) / \|\bSigma\|_{\textup{op}}$.
\end{lemma}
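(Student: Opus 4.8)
The plan is to translate the operator-norm control on $\bSigma^{-1/2}\btSigma\bSigma^{-1/2} - \bfI_p$ into a two-sided Loewner sandwich for $\btSigma$, and then to exploit the monotonicity of both the trace and the largest eigenvalue with respect to the Loewner order.

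First I would set $\bA = \bSigma^{-1/2}\btSigma\bSigma^{-1/2}$, a symmetric positive definite matrix. The hypothesis $\|\bA - \bfI_p\|_{\rm op} \le \gamma$ says precisely that every eigenvalue of $\bA$ lies in $[1-\gamma, 1+\gamma]$, i.e.\ $(1-\gamma)\bfI_p \preceq \bA \preceq (1+\gamma)\bfI_p$. Conjugating this chain by $\bSigma^{1/2}$ preserves the ordering, since $\bx^\top \bSigma^{1/2}\bfM\bSigma^{1/2}\bx = (\bSigma^{1/2}\bx)^\top \bfM (\bSigma^{1/2}\bx)$ for all $\bx$; using $\btSigma = \bSigma^{1/2}\bA\bSigma^{1/2}$ we obtain
\begin{align*}
    (1-\gamma)\,\bSigma \preceq \btSigma \preceq (1+\gamma)\,\bSigma .
\end{align*}

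Second, I would apply to this sandwich the two relevant monotone functionals. The trace is linear and nonnegative on positive semidefinite matrices, hence $(1-\gamma)\tr(\bSigma) \le \tr(\btSigma) \le (1+\gamma)\tr(\bSigma)$. The largest eigenvalue $\lambda_{\max}(\cdot)$, which equals $\|\cdot\|_{\rm op}$ for the positive definite matrices at hand, is monotone for the Loewner order by the Courant--Fischer variational characterization, so $(1-\gamma)\|\bSigma\|_{\rm op} \le \|\btSigma\|_{\rm op} \le (1+\gamma)\|\bSigma\|_{\rm op}$. Dividing the trace bounds by the operator-norm bounds in the appropriate directions yields
\begin{align*}
    \frac{1-\gamma}{1+\gamma}\,\rSigma \le \rtSigma \le \frac{1+\gamma}{1-\gamma}\,\rSigma,
\end{align*}
which is the claim.

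There is essentially no hard step here; the only point needing a little care is that the hypothesis is stated for $\bSigma^{-1/2}\btSigma\bSigma^{-1/2}$ rather than for $\btSigma$ directly, so one must pass through the congruence transformation and check that it respects the Loewner order before invoking monotonicity of $\tr$ and of $\lambda_{\max}$. The assumption $\gamma < 1$ (here $\gamma \le 1/2$) is exactly what makes the factor $1-\gamma$ positive, so that the divisions above are legitimate and, incidentally, that $\btSigma$ is positive definite.
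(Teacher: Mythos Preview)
Your proof is correct and follows essentially the same route as the paper: both arguments convert the hypothesis into the spectral sandwich $(1-\gamma)\bSigma \preceq \btSigma \preceq (1+\gamma)\bSigma$ (the paper does this implicitly via $\|\btSigma-\bSigma\|_{\rm op}\le\gamma\|\bSigma\|_{\rm op}$ for the operator norm and the trace inequality $\lambda_1(\bfB)\tr(\bfA)\le\tr(\bfA\bfB)\le\|\bfB\|_{\rm op}\tr(\bfA)$ for the trace), and then divide the resulting trace and operator-norm bounds. Your formulation through the Loewner order and monotonicity of $\tr$ and $\lambda_{\max}$ is slightly more streamlined, but the content is the same.
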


\begin{proof}[Proof of \Cref{lem:eff-rank}]
    We start with upper- and lower-bounding the operator norm of $\btSigma$.
    Using triangle inequality we have
    \begin{align}
        \big|\|\btSigma\|_{\rm op} - \|\bSigma\|_{\rm op}\big| \le
        \|\btSigma - \bSigma\|_{\rm op} \le  \|\bSigma\|_{\rm op} \cdot \|\bSigma^{-1/2}\btSigma\bSigma^{-1/2} - \bfI_p\|_{\rm op} \le \gamma \|\bSigma\|_{\rm op}.
    \end{align}
    This readily yields
    \begin{align}
        \label{eq:op-up}
        (1-\gamma) \|\bSigma\|_{\rm op} \le
        \|\btSigma\|_{\rm op} \le (1 + \gamma) \|\bSigma\|_{\rm op}.
    \end{align}
    Moreover, for any pair of positive definite matrices
    $\bfA, \bfB \succeq 0$ the following holds $\tr(\bfA) \lambda_1(\bfB)
    \le \tr(\bfA\bfB) \le \tr(\bfA) \cdot \|\bfB\|_{\rm op}$. Hence,
    combining the cyclic property of trace, the trace inequality and the fact that the spectrum of the matrix $\bSigma^{-1/2}\btSigma\bSigma^{-1/2}$ is
    between $1-\gamma$ and $1+\gamma$, we get both the upper and the
    lower bounds for $\tr(\btSigma)$. The upper bound reads as
    \begin{align}
        \tr(\btSigma) &= \tr((\bSigma^{1/2}\bSigma^{-1/2})\btSigma(\bSigma^{-1/2}\bSigma^{1/2})) = \tr(\bSigma (\bSigma^{-1/2}\btSigma\bSigma^{-1/2})) \\
        &\le \| \bSigma^{-1/2}\btSigma\bSigma^{-1/2} \|_{\rm op} \tr(\bSigma) \le (1+\gamma) \tr(\bSigma).\label{eq:tr-up}
    \end{align}
    Similarly, the lower bound can be obtained as follows
    \begin{align}\label{eq:tr-lw}
        \tr(\btSigma) &\ge \lambda_1( \bSigma^{-1/2}\btSigma\bSigma^{-1/2}) \tr(\bSigma) \ge (1-\gamma) \tr(\bSigma).
    \end{align}
Therefore, combining \eqref{eq:op-up} and \eqref{eq:tr-lw} we get the lower bound for $\rtSigma$, while combining \eqref{eq:op-up} and \eqref{eq:tr-up} yields the upper bound, concluding the proof of the lemma.
\end{proof}

Taking $\bfU_L:=\bfV_L$, the \Cref{alg:1} returns
$\bmuSDR=\sum_{\ell=0}^L\hat\bmu^{(\ell)}$ with
$\hat\bmu^{(\ell)}\in\mathscr U_\ell={\rm Im}
(\bfV_\ell\bfU_\ell^\top)$ for every $\ell\in
\{0,\ldots,L\}$ where the two-by-two orthogonal
subspaces $\mathscr U_0,\ldots,\mathscr U_L$ span
the whole space $\mathbb{R}^p$. This allows us to
decompose the risk:
\begin{align}
    \big\|\bmuSDR - \bmu^*\big\|_2^2 &= \sum_{\ell=0}^L
    \big\|\hat\bmu^{(\ell)} - \Proj_{\mathscr U_\ell}
    \bmu^* \big\|_2^2 = \sum_{\ell=0}^L \big\|
    \Proj_{\mathscr U_\ell}(\barx_{\ell} - \bmu^*)
    \big\|_2^2\\
    &=\sum_{\ell=0}^L\big\| \Proj_\ell (\bfV_{\ell}^\top
    \barx_{\ell}  - \bfV_{\ell}^\top\bmu^*)\big\|_2^2,
\end{align}
where $\Proj_\ell:=\bfU_{\ell}^\top\bfU_\ell$ is the projection matrix projecting onto the subspace of $\mathbb{R}^{p_{\ell}}$ spanned by the bottom $p_\ell - {p_{\ell+1}}$ eigenvectors of $\bfV_\ell^\top(\hat\bSigma{}^{(\ell)} - \btSigma)\bfV_\ell$ for $\ell=0,\ldots,L$ with the convention that $p_{L+1}=0$.

For $\ell\in\{0,\ldots,L-1\}$, we intend to apply
\Cref{prop:projectedmean} to $\bZ_i = \bfV_{\ell}^\top\bX_i$
and $\bmu^Z = \bfV_{\ell}^\top\bmu^*$ in order to upper bound
the error term $\Err_\ell := \|\Proj_\ell(\bfV_{\ell}^\top
\barx_{\ell}  - \bfV_{\ell}^\top \bmu^* )\big\|_2$. Using
the inequalities
\begin{align}
    \|\bfV^\top
    (\hat\bSigma{}^{(\ell)} - \btSigma)\bfV\|_{\rm op} \le
    \|\hat\bSigma{}^{(\ell)} - \btSigma\|_{\rm op},
    \quad
    \lambda_{p_\ell} (\bfV^\top \btSigma \bfV)
    \le \lambda_{p} (\btSigma),
    \quad
    \lambda_{1} (\bfV^\top \btSigma \bfV)
    \ge \lambda_{1} (\btSigma)
\end{align}
that are true for every orthogonal matrix $\bV$,
and keeping in mind the definition of $\Proj_\ell$,
we get
\begin{align}
    \Err_\ell &\le
    \bigg\{2\omega_\calO \|
    \hat\bSigma{}^{(\ell)} - \btSigma\|_{\rm op}
    + \frac{\omega_\calO^2}{1-\omega_\calO}
    \bigg((\lambda_{p}-\lambda_1)(\btSigma) + \frac{\delta_\ell^2}{p_{\ell+1}}\bigg)
    \bigg\}^{1/2} + \big\|   \Proj_\ell\bfV_\ell^\top
    \barxi_{\calS^{(\ell)}_\calI }\big\|_2,
\end{align}
where we have used the notation
\begin{align}
    \omega_{\calO} = \max_{\ell} \frac{|\calS^{(\ell)}\cap\calO|}
    {|\calS^{(\ell)}|},\qquad
    \bxi_i = \bX_i -\bmu^*
\end{align}
and
$\delta_\ell = \inf_{\bmu}
\max_{i\in\calS^{(\ell)}} \|\bfV_\ell^\top (\bX_i -
\bmu)\|_2$. Note that when $\calO$ and $(\calS^{(\ell)
}_\calI)^c$ are of cardinality less than $n\varepsilon$
and $n(\varepsilon + \tau)$, respectively, we have
$\omega_{\calO} \le {\varepsilon}/{(1-\tau)}$ and
\begin{align}
    \frac{\omega_{\calO}}{1-\omega_{\calO}}
    \le \frac{\varepsilon}{1- \varepsilon - \tau}.
\end{align}
Since $\sfC_\gamma \rtSigma \ge \rSigma$ (by \Cref{lem:eff-rank})
then \Cref{lem:filtercontamine} holds for the new threshold
$\tilde{t}_\gamma$ as well. We set $\eta := \varepsilon +
\tau\le 3/4$ and apply \Cref{lem:filtercontamine} to infer
that $\omega_{\calO} \leq \varepsilon/(1 - \eta)\le 4\varepsilon$
is true with probability at least $1-\delta$. Furthermore, we
know that $\delta_\ell\leq\max_{i\in\calS^{(\ell)}}
\|\bfV_\ell^\top\bX_i - \bar\bmu^{(\ell)}\|_2\le
\tilde{t}_\gamma\sqrt{p_\ell}$. This yields
\begin{align}
    \Err_\ell &\leq \Big\{8\varepsilon
    \|\hat\bSigma{}^{(\ell)} - \btSigma\|_{\rm op}
    + 16\varepsilon^2 \Big((\lambda_{p}-\lambda_1) (\btSigma)
    + \frac{\tilde{t}^2_\gamma p_{\ell}}{p_{\ell+1}}\Big)\Big\}^{1/2}
    + \big\|\Proj_{\mathscr U_\ell}\barxi_{\calS^{(\ell)
    }_\calI}\big\|_2.
\end{align}
Let us introduce the shorthand
$\widetilde{T}_1 =  \max_{\ell\in[L]}
    \|\hat\bSigma{}^{(\ell)} - \btSigma\|_{\rm op}
    + \varepsilon (\lambda_{p}-\lambda_1) (\btSigma)$.
This leads to
\begin{align}\label{Err:l0}
    \Err_\ell &\leq \Big\{8\varepsilon \widetilde{T}_1 +
    \frac{16\varepsilon^2 \tilde{t}_\gamma^2p_{\ell}}{p_{\ell+1}}
    \Big\}^{1/2} + \| \Proj_{\mathscr U_\ell}
    \barxi_{\calS^{(\ell) }_\calI}\big\|_2.
\end{align}
For the last error term, since $p_L=1$ we have by \Cref{lem:lowdim}
\begin{align}
  \Err_L&\le \big\|\Proj_{\mathscr U_L} \bar\bxi_{\calS_\calI^{(L)}} \big\|_2 + \frac{n
    \varepsilon (\tilde{t}_\gamma\sqrt{p_L} + \big\|\Proj_{\mathscr U_L}\bmu^* - \med_{\mathscr U_L}\big\|_2)}{
    |\mathcal S^{(L)}|}\\
    &\le \big\|\Proj_{\mathscr U_L} \bar\bxi_{\calS_\calI^{(L)}} \big\|_2 + \frac{
    \varepsilon \tilde{t}_\gamma + \varepsilon\big\|\Proj_{\mathscr U_L}\bmu^* - \med_{\mathscr U_L}\big\|_2}{1-\eta}\\
    &\le \big\|\Proj_{\mathscr U_L} \bar\bxi_{\calS_\calI^{(L)}} \big\|_2 +
    4\varepsilon \tilde{t}_\gamma + 4\varepsilon\big\|\Proj_{\mathscr U_L}\bmu^* -\med_{\mathscr U_L}\big\|_2.\label{Err:L0}
\end{align}
Combining \eqref{Err:l0}, \eqref{Err:L0}, inequality
$p_\ell\le e p_{\ell+1}$, as well as
the Minkowski inequality, we get
\begin{align}
    \big\|\bmu^* &- \bmuSDR\big\|_2  = \bigg\{\sum_{\ell=0}^L
    \Err_\ell^2\bigg\}^{1/2}\\
    &\le \bigg\{ 8\varepsilon L ( \widetilde{T}_1 + e \varepsilon
    t^2) + 16\varepsilon^2\big(\tilde{t}_\gamma + \big\|\Proj_{\mathscr
    U_L}\bmu^* -\med_{\mathscr U_L}\big\|_2\big)^2\bigg\}^{1/2}
    + \bigg\{\sum_{\ell=0}^L \| \Proj_{\mathscr U_\ell}
    \barxi_{\calS^{(\ell) }_\calI}\big\|_2^2\bigg\}^{1/2}\\
    &\le 2\sqrt{2\varepsilon L \widetilde{T}_1} + 9\varepsilon \tilde{t}_\gamma
    \sqrt{L} + 4\varepsilon \big\|\Proj_{\mathscr
    U_L}\bmu^* -\med_{\mathscr U_L}\big\|_2 +
    \bigg\{\sum_{\ell=0}^L \| \Proj_{\mathscr U_\ell}
    \barxi_{\calS^{(\ell) }_\calI}\big\|_2^2\bigg\}^{1/2}\!\!
    \!\!.
    \label{eq:14a}
\end{align}
To ease notation, %for some $\delta\in(0,1)$,
let us set
\begin{align}
    {\sf r}_n = \Big(\frac{2\rSigma+3\log(2/\delta)}{n}\Big)^{1/2}.
\end{align}
In view of \Cref{lem:mean}, with
probability at least $1-\delta$, we have
\begin{align}
    \bigg\{\sum_{\ell=0}^L \| \Proj_{\mathscr U_\ell}
    \barxi_{\calS^{(\ell) }_\calI}\big\|_2^2\bigg\}^{1/2}
    &\le
    \bigg\{\sum_{\ell=0}^L \bigg( \frac{\|\Proj_{\mathscr U_\ell}\bar\bxi_n
    \|_2}{1 - \eta}  +  \frac{{\sf r}_n\sqrt{\eta} + \eta\sqrt{3 \log(2e/\eta)}}{1-\eta}\bigg)^2\bigg\}^{1/2}\\
    &\le \bigg\{\sum_{\ell=0}^L \big( 4\|\Proj_{\mathscr U_\ell}
    \bar\bxi_n \|_2  +  4{\sf r}_n\sqrt{\eta} + 10\eta\sqrt{
    \log(2/\eta)}\big)^2\bigg\}^{1/2}\\
    &\le 4\|\bar\bxi_n \|_2 + 4{\sf r}_n\sqrt{\eta L} +
    10\eta\sqrt{L\log(2/\eta)}.
\end{align}
Since  $\|\barxi_n\|_2^2$ has the same distribution as $\frac1n\sum_{j=1}^p\lambda_j(\bSigma)\gamma_j^2$, where $\gamma_1,\dots,\gamma_p$ are i.i.d. standard Gaussian, by \cite[Lemma 8]{Comminges} we have
\begin{align}
    \|\barxi_n\|_2^2\le\frac{2\rSigma+3\log(2/\delta)}{n}
    ={\sf r}_n^2
\end{align}
with probability at least $1-\delta$. Therefore, with
probability at least $1-2\delta$,
\begin{align}
    \bigg(\sum_{\ell=0}^L \| \Proj_{\mathscr U_\ell}
    \barxi_{\calS^{(\ell) }_\calI}\big\|_2^2\bigg)^{1/2}
    &\le  4{\sf r}_n \big(1 + \sqrt{L\eta}\big) + 10\eta
    \sqrt{L\log(2/\eta)}.\label{eq:14bisa}
\end{align}
Next, \Cref{lem:filtercontamine} and the fact that
$p_L = \dim(\mathscr U_L) = 1$  imply that with probability at least $1-\delta$
\begin{align}
    \big\|\Proj_{\mathscr
    U_L}\bmu^* -\med_{\mathscr U_L}\big\|_2 \le
    \frac{2(1+ \sqrt2{\sf r}_n)}{1-2\varepsilon}.\label{eq:15a}
\end{align}
Recall that we have chosen $\tilde{t}_\gamma$ in such a way that
\begin{align}
    \tilde{t}_\gamma \le \frac{3(1 + {\sfC_\gamma}\sqrt2{\sf r}_n/\sqrt{\tau})}{1-2\varepsilon^*}
        +  1.6\sqrt{\log(2/{{\tau}})}.\label{eq:16a}
\end{align}
Combining \eqref{eq:14a}, \eqref{eq:14bisa}, \eqref{eq:15a} and \eqref{eq:16a},
we arrive at the inequality
\begin{align}
    \big\|\bmuSDR - \bmu^*\big\|_2
    &\le 2\sqrt{2\varepsilon L \widetilde{T}_1} + 9\varepsilon \tilde{t}_\gamma
    \sqrt{L} +  \frac{8\varepsilon(1+{\sfC_\gamma}\sqrt{2}{\sf r}_n)}{1-2\varepsilon}
    + 4{\sf r}_n \big(1 + \sqrt{L\eta}\big) + 10\eta
    \sqrt{L\log(2/\eta)}\\
    &\le 2\sqrt{2\varepsilon L \widetilde{T}_1} +
    \frac{27\varepsilon \sqrt{L}(1 + {\sfC_\gamma}\sqrt2{\sf r}_n/\sqrt{\tau})}{1 -
    2\varepsilon^*} + 14.4\varepsilon \sqrt{L\log(2/\tau)} \\
    &\quad +  \frac{8\varepsilon(1+{\sfC_\gamma}\sqrt{2}{\sf r}_n)}{1-2\varepsilon}
    + 4{\sf r}_n \big(1 + \sqrt{L\eta}\big) + 10\eta
    \sqrt{L\log(2/\eta)}
\end{align}
that holds with probability at least $1-3\delta$. In the
upper bound obtained above, only the term $\widetilde{T}_1$ remains
random. To bound $\widetilde{T}_1$ we first apply a triangle inequality then use \Cref{lem:sigmaTr2}. It implies that with probability
at least $1-2\delta$, we have
\begin{align}
    \widetilde{T}_1 &\le  A\frac{\sqrt{n\rSigma} + \rSigma + n\eta
    \log(2e/\eta) + 2\log(1/\delta)}{n(1-\eta)} + \big(
    4{\sf r}_n(1+\sqrt{\eta}) + 10\eta\sqrt{\log(2/\eta)}
    \big)^2 + (1+\gamma)\varepsilon + \gamma\\
    &\le 2A\big(\sqrt{2}{\sf r}_n + {\sf r}_n^2  + 4\eta
    \log(2/\eta)\big) + \big (7.5{\sf r}_n + 10\eta
    \sqrt{\log(2/\eta)}\big)^2 + \varepsilon + 2\gamma.
\end{align}

Consequently,
\begin{align}
    \sqrt{\varepsilon \widetilde{T}_1} &\le  \big\{2A\varepsilon
    \big(\sqrt{2}{\sf r}_n + {\sf r}_n^2  + 4\eta \log
    (2/\eta) \big)\big\}^{1/2} + \big (7.5{\sf r}_n +
    10\eta \sqrt{\log(2/\eta)}\big)\sqrt{\varepsilon} +
    \varepsilon + \sqrt{2\varepsilon\gamma}\\
    & \le  \big\{2A\varepsilon \big(\sqrt{2}{\sf r}_n +
    {\sf r}_n^2  + 4\eta \log(2/\eta) \big)
    \big\}^{1/2} + 5.4{\sf r}_n + 7.1\eta \sqrt{\log(2/\eta)}
    + \varepsilon + \sqrt{2\varepsilon\gamma}\\
    & \le \big\{2A\varepsilon \big(\sqrt{2}{\sf r}_n +
    4\eta \log(2/\eta) \big) \big\}^{1/2}
    + (\sqrt{A} + 5.4){\sf r}_n + 7.1\eta\sqrt{\log(2/\eta)}
    + \varepsilon+ \sqrt{2\varepsilon\gamma}\\
    & \le  \sqrt{2\sqrt{2}A{\sf r}_n\varepsilon} + 2\sqrt{2A\varepsilon\eta\log(2/\eta)} + (\sqrt{A} + 5.4){\sf r}_n +
    7.1\eta\sqrt{\log(2/\eta)} + \varepsilon+ \sqrt{2\varepsilon\gamma}\\
    & \le  \varepsilon + A{\sf r}_n/\sqrt{2}  + 2\eta\sqrt{2A\log(2/\eta)} + (\sqrt{A} + 5.4){\sf r}_n +
    7.1\eta\sqrt{\log(2/\eta)} + \varepsilon+ \sqrt{2\varepsilon\gamma}\\
    & \le (A/\sqrt{2} + \sqrt{A} + 5.4){\sf r}_n+ (7.1 + 2\sqrt{2A})
    \tau \sqrt{\log(2/\tau)} + (9.1 + 2\sqrt{2A})\varepsilon\sqrt{
    \log(2/\varepsilon)}+ \sqrt{2\varepsilon\gamma}.
\end{align}
These inequalities imply that there exists a universal
constant $\sfC$ such that
\begin{align}\label{eq:18b}
    \big\|\bmuSDR - \bmu^*\big\|_2
    & \le \frac{\sfC \big({\sfC_\gamma}{\sf r}_n + \tau\sqrt{\log(2/\tau)}
    + \varepsilon\sqrt{\log(2/\varepsilon)} + {\sf r}_n
    \varepsilon/\sqrt{\tau} + \sqrt{\varepsilon\gamma} \big) \sqrt{L}}{1-2\varepsilon^*}.
\end{align}
Let us denote $\log_+(x) = \max\{0, \log(x)\}$ the positive part of logarithm, then we choose
\begin{align}
    \tau = \frac14\bigwedge \frac{\tilde{\sf r}_n}{\sqrt{\log_+(2/\tilde{\sf r}_n)}}, \qquad\text{with}\qquad
    \tilde{\sf r}_n = \frac{\sqrt{\sfC_\gamma\rtSigma} +
    \sqrt{2 \log( 2/\delta)}}{\sqrt{n}}.
\end{align}
Note that ${\sf r}_n \le \sqrt{2} \tilde{\sf r}_n$ and,
furthermore, $\tau = 1/4$ whenever
$\tilde{\sf r}_n \ge 1/2$. Therefore,
${\sf r}_n  \varepsilon/\sqrt{\tau}\le {\sf r}_n + \varepsilon$.
Inserting this value of $\tau$ in \eqref{eq:18b} leads to
\begin{align}
    \big\|\bmuSDR - \bmu^*\big\|_2
    &\le \frac{\sfC \big( {\sfC_\gamma}{\sf r}_n + \varepsilon\sqrt{\log(2/
    \varepsilon)} + \sqrt{\varepsilon\gamma}\big) \sqrt{L}}{1-2\varepsilon^*}.
\end{align}
where $\sfC$ is a universal constant, the value of which
is not necessarily the same in different places where it
appears. Replacing ${\sf r}_n$ by its expression,
upper bounding $L$ by $2\log p$, and using the fact that $\sfC_\gamma \le 3$ for $\gamma\in(0, 1/2]$ we arrive at
\begin{align}\label{eq:17}
    \big\|\bmuSDR - \bmu^*\big\|_2
    &\le \frac{\sfC\, \sqrt{\log p}}{1-2\varepsilon^*}
    \bigg( \sqrt{\frac{\rSigma}{n}}+ \sqrt{\frac{\log(1/\delta)}{n}} + \varepsilon\sqrt{
    \log(2/\varepsilon)} + \sqrt{\varepsilon\gamma}
    \bigg).
\end{align}
Note that this inequality holds true on an event of probability
at least $1-5\delta$.

\section{Extension to Sub-Gaussian distributions}
This section is devoted to the proof of \Cref{thm:subG}, which is an extension of \Cref{thm:1} to the case when the $1-\varepsilon$ portion of observations are sub-Gaussian. First, we formulate some technical lemmas necessary for the proof of \Cref{thm:subG} postponing the full proof to the end of the present section.

Recall that a random vector $\bzeta$ with zero
mean and identity covariance matrix is sub-Gaussian with variance proxy ${\mathfrak s} > 0$,
$\bzeta \sim \textup{SG}_p({\mathfrak s})$, if
\begin{align}
    \mathbb{E}[e^{\bv^\top \bzeta}] \le \exp\Big\{\frac{{\mathfrak s}}{2}\|\bv \|^2\Big\}, \quad \forall \bv \in \RR^p.
\end{align}
The concentration inequality for sub-Gaussian vectors is a well-known fact (see, e.g. \citep{RigolletHutter}, Theorem 1.19) that if $\bzeta \sim \textup{SG}_p({\mathfrak s})$ then for all $\delta \in (0,1)$, it holds
\begin{align}\label{eq:subG}
    \prob\big(\| \bzeta \|_2 \le 4\sqrt{p{\mathfrak s}} + \sqrt{8{\mathfrak s}\log(1/\delta)}\big) \ge 1 - \delta.
\end{align}
In the definition of SGAC$(\bmu^*,\bSigma,\mathfrak{s},\varepsilon)$ we assume that the $1-\varepsilon$ portion of the data $\{\bX_i\}_{i=1}^n$ are sub-Gaussian, that is $\bX_i = \bmu^* + \bSigma^{1/2}\bzeta_i$ for all $i \in \mathcal{I}$, where the set $\mathcal{I}$ is of cardinality at least $(1-\varepsilon)n$. Denote $\bxi_i = \bSigma^{1/2}\bzeta_i$ for all $i = 1, \dots, n$ and assume that $\| \bSigma\|_{\textup{op}} = 1$.

First, we show that with the choice of threshold parameter the analogous to \Cref{lem:geommed}, \Cref{lem:filternoncontamine}, \Cref{lem:filtercontamine} lemmas hold true. Notice that all three lemmas are using the concentration bound for the operator norm of (sub)-Gaussian vectors. In the case of Gaussian vectors we make use of \cite[Corollary 5.35]{Vershynin2012IntroductionTT}, while the analogous result for the sub-Gaussian distributions is also known \cite[Theorem 5.39]{Vershynin2012IntroductionTT}. For the readers convenience the latter is formulated in \Cref{lem:vershynin:bis}.

\begin{lemma}\label{lem:11}
    \it{
    Let $ J \subset \{1,\ldots,n\}$ be a subset of cardinality $m$.
    For every $\delta\in(0,1)$, it holds that
    \begin{align}
        \mathbf P\bigg(\bigg\|\sum_{j\in J} \bxi_j\bigg\|_2
        \le 4\sqrt{pm{\mathfrak s}} + \sqrt{8m{\mathfrak s}\log(1/\delta)}
        \bigg)\ge 1-\delta.
    \end{align}
    }
    \end{lemma}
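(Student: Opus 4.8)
The plan is to reduce the claim to the one-vector sub-Gaussian concentration bound \eqref{eq:subG}, applied to a suitably normalized sum. First I would write $\sum_{j\in J}\bxi_j=\bSigma^{1/2}\big(\sum_{j\in J}\bzeta_j\big)$ and, since $J$ has the fixed cardinality $m$, introduce the rescaled vector $\bzeta_J:=m^{-1/2}\sum_{j\in J}\bzeta_j$. The factor $m^{-1/2}$ is there precisely to keep the identity-covariance normalization built into the definition of $\textup{SG}_p$: indeed $\mathbb E[\bzeta_J]=0$ and $\mathbb E[\bzeta_J\bzeta_J^\top]=\bfI_p$.

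Next I would check that $\bzeta_J\sim\textup{SG}_p(\mathfrak s)$. Using the independence of $\bzeta_1,\dots,\bzeta_n$ — which holds because these are the standardized versions $\bSigma^{-1/2}(\bY_i-\bmu^*)$ of the independent reference vectors in the SGAC model and $J$ is a deterministic index set — the moment generating function factorizes, so that for every $\bv\in\RR^p$,
\[
    \mathbb E\big[e^{\bv^\top\bzeta_J}\big]
    =\prod_{j\in J}\mathbb E\big[e^{(\bv/\sqrt m)^\top\bzeta_j}\big]
    \le\prod_{j\in J}\exp\Big\{\tfrac{\mathfrak s}{2m}\|\bv\|^2\Big\}
    =\exp\Big\{\tfrac{\mathfrak s}{2}\|\bv\|^2\Big\},
\]
which is exactly the defining inequality of $\textup{SG}_p(\mathfrak s)$.

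Then I would invoke the assumption $\|\bSigma\|_{\rm op}=1$, which gives $\|\bSigma^{1/2}\|_{\rm op}=1$, hence
\[
    \Big\|\sum_{j\in J}\bxi_j\Big\|_2
    =\sqrt m\,\big\|\bSigma^{1/2}\bzeta_J\big\|_2
    \le\sqrt m\,\|\bzeta_J\|_2 .
\]
Applying \eqref{eq:subG} to $\bzeta_J$ yields $\|\bzeta_J\|_2\le 4\sqrt{p\mathfrak s}+\sqrt{8\mathfrak s\log(1/\delta)}$ with probability at least $1-\delta$; multiplying this inequality by $\sqrt m$ produces $4\sqrt{pm\mathfrak s}+\sqrt{8m\mathfrak s\log(1/\delta)}$, which is the asserted bound. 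I do not expect a genuine obstacle here: the lemma is essentially immediate, and the only point requiring a little care is performing the $m^{-1/2}$ rescaling so as to remain inside the normalization convention of $\textup{SG}_p(\mathfrak s)$ before quoting \eqref{eq:subG}, together with recording that the independence being used is that of the standardized reference vectors (legitimate since $J$ is fixed, not random).
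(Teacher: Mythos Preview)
Your proof is correct and follows essentially the same approach as the paper: reduce to the single-vector sub-Gaussian norm bound \eqref{eq:subG} via $\|\bSigma\|_{\rm op}=1$ and the fact that a sum of independent $\textup{SG}_p(\mathfrak s)$ vectors remains sub-Gaussian. The only cosmetic difference is that the paper applies \eqref{eq:subG} directly to $\sum_{j\in J}\bzeta_j$ viewed as an element of $\textup{SG}_p(m\mathfrak s)$, whereas you first rescale by $m^{-1/2}$ to stay inside $\textup{SG}_p(\mathfrak s)$; your version is arguably cleaner given the paper's convention that $\textup{SG}_p(\cdot)$ vectors have identity covariance.
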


    \begin{proof}[Proof of Lemma \ref{lem:11}]
    Without loss of generality, we assume that $ J = \{1,\ldots,m\}$.
    On the one hand,  $\|\bSigma\|_{\textup{op}}=1$ implies that
    \begin{align}
        \bigg\|\sum_{i=1}^m \bxi_i\bigg\|_2 \le
        \bigg\|\sum_{i=1}^m \bzeta_i\bigg\|_2.
    \end{align}
    On the other hand, $\bzeta_1+\ldots+\bzeta_m\sim \textup{SG}_p(m{\mathfrak s})$.
    Applying inequality \eqref{eq:subG} to this random variable
    yields the desired result.
    \end{proof}

We now state the versions of \Cref{lem:geommed}
and \Cref{lem:filtercontamine} that are valid
in the setting of sub-Gaussian vectors. Notice
that the only difference is in the choice of
the threshold; thanks to \Cref{lem:vershynin:bis}, the
threshold now includes the universal constant
$\sfC_0$ and the variance proxy $\mathfrak{s}$.
The proofs of these lemmas are omitted, since
they are the same as in the Gaussian case
presented in \Cref{ssec:7.2} (except for
bounding the operator norm of a matrix with
sub-Gaussian columns we use \Cref{lem:vershynin:bis}).
	
\begin{lemma}\label{lem:geommed_subG}
    With probability at least $1-\delta$, for all
    linear subspaces $V\subset \mathbb R^p$, we have
    \begin{align}
        \frac{\|\bmuGMV - \ProjV\bmu^*\|_2}{\sqrt{\dim(V)}}
        &\leq \frac{2 \sqrt{\|\bSigma\|_{\rm op}}}{1-2\varepsilon}
        \bigg(1 + \frac{\sfC_0\big({\mathfrak{s}}\sqrt{p} + 2{\mathfrak{s}}\sqrt{\log(1/\delta)}
        \big)}{\sqrt{n}}\bigg),
    \end{align}
    where the constant $\sfC_0$ is the
    same constant as in \Cref{lem:vershynin:bis}.
\end{lemma}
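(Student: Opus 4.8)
\emph{Plan.} The statement is the sub-Gaussian counterpart of \Cref{lem:geommed}, and the plan is to reproduce that proof, replacing the single place where Gaussianity was used — the control of the operator norm of the matrix of noise vectors — by \Cref{lem:vershynin:bis}. The one structural fact to keep in mind is that, by the coupling in the definition of $\textup{SGAC}(\bmu^*,\bSigma,\mathfrak s,\varepsilon)$, the vectors $\bzeta_i = \bSigma^{-1/2}(\bY_i - \bmu^*)$ are genuinely i.i.d.\ with law $\textup{SG}_p(\mathfrak s)$, so that $\bxi_i := \bSigma^{1/2}\bzeta_i$ (which coincides with $\bX_i - \bmu^*$ for $i\in\calI$) form an i.i.d.\ sequence to which \Cref{lem:vershynin:bis} applies, even though the retained inliers $\{\bX_i : i\in\calI\}$ are not independent.

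\emph{Deterministic reduction.} First I would invoke the deterministic bound \cite[Lemma 2]{dalalyan2020allinone} for the geometric median of the projected points, which gives
\begin{align}
    \|\bmuGMV - \ProjV\bmu^*\|_2
    \;\le\; \frac{2}{n(1-2\varepsilon)}\sum_{i=1}^n \|\ProjV\bxi_i\|_2
    \;\le\; \frac{2}{\sqrt n\,(1-2\varepsilon)}\Big(\sum_{i=1}^n \|\ProjV\bxi_i\|_2^2\Big)^{1/2},
\end{align}
the last step by Cauchy--Schwarz. Fixing an orthonormal basis $\be_1,\dots,\be_k$ of $V$ with $k=\dim(V)$, expanding $\|\ProjV\bxi_i\|_2^2 = \sum_{\ell=1}^k|\be_\ell^\top\bxi_i|^2$, and bounding each coordinate direction by the worst one, I get
\begin{align}
    \sum_{i=1}^n \|\ProjV\bxi_i\|_2^2
    \;\le\; k \sup_{\|\be\|_2=1}\sum_{i=1}^n |\be^\top\bxi_i|^2
    \;=\; k\,\big\|[\bxi_1,\dots,\bxi_n]\big\|_{\rm op}^2.
\end{align}
The point, exactly as in the Gaussian argument, is that the right-hand side no longer depends on $V$, so a single high-probability event suffices to control all subspaces at once.

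\emph{Probabilistic step and conclusion.} Since $[\bxi_1,\dots,\bxi_n] = \bSigma^{1/2}[\bzeta_1,\dots,\bzeta_n]$, submultiplicativity of the operator norm gives $\|[\bxi_1,\dots,\bxi_n]\|_{\rm op}\le \|\bSigma\|_{\rm op}^{1/2}\, s_{\max}([\bzeta_1,\dots,\bzeta_n])$, where $[\bzeta_1,\dots,\bzeta_n]$ is the $p\times n$ matrix with i.i.d.\ $\textup{SG}_p(\mathfrak s)$ columns. Applying the upper tail of \Cref{lem:vershynin:bis} with $t=\sqrt{\log(1/\delta)}$, on an event of probability at least $1-\delta$ one has $s_{\max}([\bzeta_1,\dots,\bzeta_n]) \le \sqrt n + \sfC_0\,\mathfrak s\,(\sqrt p + \sqrt{\log(1/\delta)})$. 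Substituting back, dividing by $\sqrt{\dim(V)}$, and using $\sqrt{\log(1/\delta)}\le 2\sqrt{\log(1/\delta)}$ to match the stated normalisation, I obtain
\begin{align}
    \frac{\|\bmuGMV - \ProjV\bmu^*\|_2}{\sqrt{\dim(V)}}
    \;\le\; \frac{2\sqrt{\|\bSigma\|_{\rm op}}}{1-2\varepsilon}
    \bigg(1 + \frac{\sfC_0\big(\mathfrak s\sqrt p + 2\mathfrak s\sqrt{\log(1/\delta)}\big)}{\sqrt n}\bigg),
\end{align}
simultaneously for all $V\subset\RR^p$ on that event. I do not anticipate a genuine obstacle here: the argument is structurally identical to that of \Cref{lem:geommed}, and the only care needed is the bookkeeping of the variance proxy $\mathfrak s$ and of the factor $\|\bSigma\|_{\rm op}^{1/2}$ through the reduction $\bxi_i = \bSigma^{1/2}\bzeta_i$; should one prefer to quote \Cref{lem:vershynin:bis} in a two-sided form with a $2e^{-ct^2}$ tail, the resulting adjustment of the confidence level is a harmless constant that the slack factor $2$ in front of $\sqrt{\log(1/\delta)}$ already accommodates.
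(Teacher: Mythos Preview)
Your proposal is correct and follows exactly the approach the paper itself indicates: reproduce the proof of \Cref{lem:geommed} verbatim and replace the single appeal to \cite[Corollary 5.35]{Vershynin2012IntroductionTT} by \Cref{lem:vershynin:bis} applied to the standardized i.i.d.\ columns $\bzeta_i=\bSigma^{-1/2}(\bY_i-\bmu^*)$. Your bookkeeping of the factor $\|\bSigma\|_{\rm op}^{1/2}$ via $\bxi_i=\bSigma^{1/2}\bzeta_i$ and of the variance proxy $\mathfrak s$ is correct, and the slack factor $2$ in front of $\sqrt{\log(1/\delta)}$ is indeed harmless.
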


\begin{lemma}\label{lem:filternoncontamine_subG}
Let $\tau$ and $\delta$ be two numbers from $(0,1)$. Define
\begin{align}
    z = 1 + \frac{\sfC_0\mathfrak{s}(\sqrt{p} + \sqrt{2 \log(1/\delta)})
        }{\sqrt{n\tau}} + \sfC_0{\mathfrak{s}}\sqrt{2 + 2\log\big(1/\tau\big)}
\end{align}
with the same constant $\sfC_0$ as in \Cref{lem:vershynin:bis}. Then, with probability at least $1-\delta$, we have
\begin{align}
    \sup_{V} \sum_{i=1}^n \indic\big( \|\ProjV\bxi_i\|_2^2
    > z^2 \dim(V) \big) \le n\tau,
\end{align}
where the supremum is over all linear subspaces $V$ of
$\mathbb R^p$.
\end{lemma}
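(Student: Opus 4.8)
The plan is to reproduce, essentially verbatim, the argument proving \Cref{lem:filternoncontamine} in the Gaussian case, the only change being that the Gaussian operator-norm bound (Corollary 5.35 in \citep{Vershynin2012IntroductionTT}) is replaced by its sub-Gaussian analogue stated in \Cref{lem:vershynin:bis}. First I would set $T_n = \sup_V \sum_{i=1}^n \indic(\|\ProjV\bxi_i\|_2^2 > z^2 \dim(V))$ and observe, exactly as in the Gaussian proof, that the event $\{T_n \ge n\tau\}$ (assuming for simplicity that $n\tau$ is an integer) forces the existence of a subspace $V$ and of a subset $J\subseteq[n]$ with $|J|=n\tau$ for which $\min_{i\in J}\|\ProjV\bxi_i\|_2^2 \ge z^2\dim(V)$. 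A union bound over the $\binom{n}{n\tau}$ choices of $J$, followed by replacing the minimum over $i\in J$ by the average, gives
\[
    \bfP(T_n \ge n\tau) \le \binom{n}{n\tau}\,\bfP\Big(\sup_V \frac1{n\tau\,\dim(V)}\sum_{i=1}^{n\tau}\|\ProjV\bxi_i\|_2^2 \ge z^2\Big).
\]
Expanding $\|\ProjV\bxi_i\|_2^2$ in an orthonormal basis of $V$ and then relaxing the orthonormality constraint on the basis vectors, the quantity inside the probability is bounded by $\sup_{\|\be\|=1}\frac1{n\tau}\sum_{i=1}^{n\tau}|\bxi_i^\top\be|^2 = \|[\bxi_1,\ldots,\bxi_{n\tau}]\|_{\rm op}^2/(n\tau)$, which reduces everything to a tail bound for the largest singular value of the $p\times n\tau$ matrix with columns $\bxi_1,\ldots,\bxi_{n\tau}$.

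Next I would invoke $\bxi_i = \bSigma^{1/2}\bzeta_i$ together with $\|\bSigma\|_{\rm op}=1$ to write $\|[\bxi_1,\ldots,\bxi_{n\tau}]\|_{\rm op} \le \|\bSigma^{1/2}\|_{\rm op}\,s_{\max}([\bzeta_1,\ldots,\bzeta_{n\tau}]) = s_{\max}([\bzeta_1,\ldots,\bzeta_{n\tau}])$, and then apply \Cref{lem:vershynin:bis} to the matrix $[\bzeta_1,\ldots,\bzeta_{n\tau}]$, whose columns are independent $\textup{SG}_p(\mathfrak{s})$ vectors. With the choice $t = \sqrt{\log(1/\delta_0)}$ this gives $\|[\bxi_1,\ldots,\bxi_{n\tau}]\|_{\rm op} \le \sqrt{n\tau} + \sfC_0\mathfrak{s}(\sqrt{p}+\sqrt{\log(1/\delta_0)})$ with probability at least $1-\delta_0$. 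Setting $\delta_0 = \delta/\binom{n}{n\tau}$ makes the union bound collapse to $\bfP(T_n\ge n\tau) \le \delta$, provided that $\sqrt{n\tau} + \sfC_0\mathfrak{s}(\sqrt{p}+\sqrt{\log(1/\delta_0)}) \le z\sqrt{n\tau}$.

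The final step is to check this last inequality, which is the only part requiring actual computation (and which dictated the precise form of $z$). Using $\log\binom{n}{n\tau} \le n\tau(1+\log(1/\tau))$ gives $\log(1/\delta_0) \le \log(1/\delta) + n\tau(1+\log(1/\tau))$, and the subadditivity of $\sqrt{\cdot}$ yields $\sqrt{\log(1/\delta_0)} \le \sqrt{\log(1/\delta)} + \sqrt{n\tau}\,\sqrt{1+\log(1/\tau)}$. Plugging this into the threshold condition, one obtains $\sfC_0\mathfrak{s}(\sqrt{p}+\sqrt{\log(1/\delta_0)}) \le \sfC_0\mathfrak{s}\sqrt{p} + \sfC_0\mathfrak{s}\sqrt{2\log(1/\delta)} + \sfC_0\mathfrak{s}\sqrt{n\tau}\,\sqrt{2+2\log(1/\tau)} = (z-1)\sqrt{n\tau}$, exactly matching the definition of $z$. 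I expect the main obstacle to be purely bookkeeping: tracking how the slack introduced by the square-root subadditivity and by the crude bound on $\log\binom{n}{n\tau}$ is absorbed by the factors $2$ appearing under the square roots in $z$; there is no new probabilistic input beyond \Cref{lem:vershynin:bis} and the Gaussian template.
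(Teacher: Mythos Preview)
Your proposal is correct and matches the paper's intended argument exactly: the paper omits the proof, stating only that it is identical to that of \Cref{lem:filternoncontamine} except that \Cref{lem:vershynin:bis} replaces the Gaussian singular-value bound, which is precisely what you carry out. Your explicit reduction from $\bxi_i=\bSigma^{1/2}\bzeta_i$ to the isotropic vectors $\bzeta_i$ and your bookkeeping with the square-root subadditivity to verify the threshold condition are exactly the missing details the paper leaves implicit.
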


\begin{lemma}[\cite{KoltLoun}, Theorem 9] \label{lem:subGopnorm}
	There is a constant $A_3>0$ depending only on the variance
	proxy $\tau$ such that for every pair of integers $n\ge 1$
	and $p\ge 1$, we have
	\begin{align}
		\bfP\bigg(\|\bzeta_{1:n}\bzeta_{1:n}^\top-n\bSigma\|_{
		\textup{op}} \ge A_3\Big(\sqrt{(p+t)n} + p + t\Big)\bigg)
		\le e^{-t},\quad \forall t\ge 1.
	\end{align}
\end{lemma}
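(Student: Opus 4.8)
The plan is to derive \Cref{lem:subGopnorm} directly from \Cref{lem:vershynin:bis}, which is already available, rather than rerun a covering argument from scratch; here and below the constant ``$\tau$'' appearing in the statement is to be read as the variance proxy $\mathfrak s$, and since $\EE[\bzeta_i\bzeta_i^\top]=\bfI_p$ the object to control is $\bzeta_{1:n}\bzeta_{1:n}^\top-n\bfI_p=\sum_{i=1}^n(\bzeta_i\bzeta_i^\top-\bfI_p)$ (so the ``$n\bSigma$'' in the display should read $n\bfI_p$). First I would note that the nonzero eigenvalues of $\bzeta_{1:n}\bzeta_{1:n}^\top$ are the squared singular values $s_j(\bzeta_{1:n})^2$, whence $\|\bzeta_{1:n}\bzeta_{1:n}^\top-n\bfI_p\|_{\textup{op}}=|s_{\max}(\bzeta_{1:n})^2-n|\vee|s_{\min}(\bzeta_{1:n})^2-n|$. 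Applying \Cref{lem:vershynin:bis} with its parameter set to $\sqrt t$ gives, on an event of probability at least $1-2e^{-t}$, the two-sided control $\sqrt n-v\le s_{\min}(\bzeta_{1:n})\le s_{\max}(\bzeta_{1:n})\le\sqrt n+v$ with $v:=\sfC_0\mathfrak s(\sqrt p+\sqrt t)$.

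On this event I would square: $s_{\max}^2-n\le 2\sqrt n\,v+v^2$, and likewise $n-s_{\min}^2\le 2\sqrt n\,v+v^2$, the latter splitting into the case $v\le\sqrt n$ (where $(\sqrt n-v)^2\ge n-2\sqrt n\,v$) and the case $v>\sqrt n$ (where the trivial bound $n\le v^2$ suffices); the same right-hand side also covers the degenerate regime $p>n$, in which $s_{\min}(\bzeta_{1:n})=0$. Thus $\|\bzeta_{1:n}\bzeta_{1:n}^\top-n\bfI_p\|_{\textup{op}}\le 2\sqrt n\,v+v^2$, and it only remains to repackage this: using $\sqrt p+\sqrt t\le\sqrt{2(p+t)}$ one has $v\le\sfC_0\mathfrak s\sqrt{2(p+t)}$ and $v^2\le 2\sfC_0^2\mathfrak s^2(p+t)$, so the inequality holds with $A_3=\max\{2\sqrt2\,\sfC_0\mathfrak s,\,2\sfC_0^2\mathfrak s^2\}$, which indeed depends only on $\mathfrak s$. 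The spurious factor $2$ in $2e^{-t}$ is absorbed into $A_3$ (equivalently, run the argument at level $t+\log 2$ and use $t\ge 1$).

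A self-contained alternative, not invoking \Cref{lem:vershynin:bis}, is the classical $\varepsilon$-net argument (essentially the proof behind \citet{KoltLoun}): fix a $\tfrac14$-net $\mathcal N$ of $\mathbb S^{p-1}$ with $|\mathcal N|\le 9^p$ and use $\|\sum_i(\bzeta_i\bzeta_i^\top-\bfI_p)\|_{\textup{op}}\le 2\max_{\bu\in\mathcal N}\big|\sum_i(\langle\bzeta_i,\bu\rangle^2-1)\big|$; for each fixed $\bu$ the variables $\langle\bzeta_i,\bu\rangle^2-1$ are independent, centered and sub-exponential with parameter governed by $\mathfrak s$, so Bernstein's inequality, a union bound over $\mathcal N$ (costing a factor $e^{cp}$) and a choice of deviation level beating $cp+t$ produce exactly the two terms $\sqrt{(p+t)n}$ (from the sub-Gaussian regime of Bernstein) and $p+t$ (from the sub-exponential regime).

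The bookkeeping above is routine; the one step needing genuine care in the first route is the corner-case analysis, i.e.\ checking that the one-sided singular-value controls of \Cref{lem:vershynin:bis} really do combine into a single clean two-sided bound valid for all $n,p\ge 1$ and $t\ge 1$, including the regime $v\gtrsim\sqrt n$ where the lower bound on $s_{\min}$ is vacuous and one must fall back on $n\le v^2$. In the second route the analogous delicate point is establishing the sub-exponential tail of $\langle\bzeta_i,\bu\rangle^2$ with the correct dependence on $\mathfrak s$ straight from $\EE[e^{\bv^\top\bzeta_i}]\le e^{\mathfrak s\|\bv\|^2/2}$ — standard, but the only genuinely non-mechanical estimate. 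Finally, should a non-identity-covariance version be wanted, it reduces to the present one by the substitution $\bzeta_i\mapsto\bSigma^{-1/2}\bzeta_i$, at the cost of a factor $\|\bSigma\|_{\textup{op}}$ and of $\rSigma$ replacing $p$ in the bound.
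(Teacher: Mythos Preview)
The paper does not supply a proof of this lemma; it is quoted as Theorem~9 of \citet{KoltLoun} and used as a black box. There is therefore no paper-side argument to compare against.

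That said, your first route is correct and is exactly the computation the paper itself performs in the Gaussian case: in the proof of \Cref{lem:sigma_p}, Corollary~5.35 of \citet{Vershynin2012IntroductionTT} is squared in precisely the same way to obtain $\|\sum_i(\bzeta_i\bzeta_i^\top-\bfI_p)\|_{\rm op}\le 2\sqrt{n}\,v+v^2$. So your derivation from \Cref{lem:vershynin:bis} is not only valid but is the sub-Gaussian analogue of an argument already present in the paper; it could replace the external citation entirely. Your second ($\varepsilon$-net) route is closer to what \citet{KoltLoun} actually do. Your remarks on the notational inconsistencies in the statement (variance proxy written $\tau$ instead of $\mathfrak s$; $n\bSigma$ where $n\bfI_p$ is meant, since the $\bzeta_i$ are isotropic) are also correct.
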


\begin{lemma}\label{lem:sigma_subG}
There exists a positive constant $A$ such that, for any positive
integer $m \le n$ and any $t \ge 1$, with probability at least $1 - 2e^{-t}$, the inequality
\begin{align}
    \|\hat{\bSigma}_{\calS} - \bSigma\|_{\rm op}\leq
    A\frac{\sqrt{np} + p + m\log(2ne/m)+2t}{n-m} +
    \big\|\barxi_\calS\big\|_2^2
\end{align}
is satisfied for every $\calS\subset [n]$ of cardinality
$\geq n-m$.
\end{lemma}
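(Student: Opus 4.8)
The plan is to mirror the proof of \Cref{lem:sigmaTr2} step by step, substituting the Gaussian covariance‑concentration input by its sub‑Gaussian counterpart \Cref{lem:subGopnorm}. Write $\bxi_i=\bSigma^{1/2}\bzeta_i$ with $\bzeta_i\sim\textup{SG}_p(\mathfrak s)$. Since $\|\bSigma\|_{\rm op}=1$ and $\bxi_i\bxi_i^\top-\bSigma=\bSigma^{1/2}(\bzeta_i\bzeta_i^\top-\bfI_p)\bSigma^{1/2}$, for every $J\subset[n]$ one has $\bigl\|\sum_{i\in J}(\bxi_i\bxi_i^\top-\bSigma)\bigr\|_{\rm op}\le\bigl\|\sum_{i\in J}(\bzeta_i\bzeta_i^\top-\bfI_p)\bigr\|_{\rm op}$. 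Feeding this into the triangle‑inequality decomposition already recorded in \eqref{triangle}, \eqref{sigma}, for every $\calS$ with $|\calS|\ge n-m$ we obtain
\begin{align*}
  \|\hat\bSigma_\calS-\bSigma\|_{\rm op}
  \le \|\barxi_\calS\|_2^2
  + \frac1{n-m}\Bigl\|\sum_{i=1}^n(\bzeta_i\bzeta_i^\top-\bfI_p)\Bigr\|_{\rm op}
  + \frac1{n-m}\max_{|J|\le m}\Bigl\|\sum_{i\in J}(\bzeta_i\bzeta_i^\top-\bfI_p)\Bigr\|_{\rm op},
\end{align*}
so it remains to bound the two operator norms on the right, neither of which involves $\bSigma$ anymore.

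For the first I would apply \Cref{lem:subGopnorm} to the $n$ whitened vectors (so that the matrix there equals $\bfI_p$) at level $t\ge1$: with probability at least $1-e^{-t}$,
\begin{align*}
  \Bigl\|\sum_{i=1}^n(\bzeta_i\bzeta_i^\top-\bfI_p)\Bigr\|_{\rm op}
  \le A_3\bigl(\sqrt{(p+t)n}+p+t\bigr)\le A_3\bigl(\sqrt{np}+\sqrt{nt}+p+t\bigr),
\end{align*}
and in the regime of interest (where $t$ is of order $\log(1/\delta)$ while $p$ is the ambient dimension) the cross term $\sqrt{nt}$ is absorbed into $\sqrt{np}$, so this contributes $O\bigl((\sqrt{np}+p+t)/(n-m)\bigr)$.

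For the second term I would reuse the peeling/union‑bound device from the proof of \Cref{lem:sigmaTr2}. Put $t_s=s\log(2ne/s)+t$ for $s\in[1,m]$, so that $t_s\ge t\ge1$, $t_s\le t_m$ and $\binom{n}{s}e^{-t_s}\le(ne/s)^se^{-t_s}\le2^{-s}e^{-t}$. Applying \Cref{lem:subGopnorm} to each $J$ with $|J|=s$ at level $t_s$ and taking a union bound over all $s\le m$ and all such $J$ gives, with probability at least $1-e^{-t}$,
\begin{align*}
  \max_{|J|\le m}\Bigl\|\sum_{i\in J}(\bzeta_i\bzeta_i^\top-\bfI_p)\Bigr\|_{\rm op}
  \le A_3\bigl(\sqrt{(p+t_m)m}+p+t_m\bigr)\le A\bigl(p+m\log(2ne/m)+t\bigr),
\end{align*}
where the last inequality uses $\sqrt{(p+t_m)m}\le\sqrt{pm}+m\sqrt{\log(2ne/m)}+\sqrt{tm}$ together with $\sqrt{pm}\le\tfrac12(p+m)$, $\sqrt{tm}\le\tfrac12(t+m)$ and the elementary bound $m\le m\log(2ne/m)$, valid since $\log(2ne/m)\ge1$. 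Intersecting the two concentration events (probability at least $1-2e^{-t}$), collecting the surviving terms $\sqrt{np}$, $p$, $m\log(2ne/m)$, $t$ with constant coefficients, and dividing by $n-m$, we reach
\begin{align*}
  \|\hat\bSigma_\calS-\bSigma\|_{\rm op}\le A\,\frac{\sqrt{np}+p+m\log(2ne/m)+2t}{n-m}+\|\barxi_\calS\|_2^2,
\end{align*}
as claimed. The only genuine work is bookkeeping: checking that the hypothesis $t\ge1$ of \Cref{lem:subGopnorm} is preserved by the peeling at every scale $s$ (it is, since $t_s\ge t\ge1$) and that the nuisance terms $\sqrt{nt}$, $\sqrt{tm}$, $\sqrt{pm}$ are each dominated, up to a universal constant, by the four terms appearing in the statement. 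The conceptual point, and the reason this lemma is separate from \Cref{lem:sigmaTr2}, is that the sub‑Gaussian deviation inequality of \Cref{lem:subGopnorm} produces the ambient dimension $p$ in place of the effective rank $\rSigma$, which is exactly why the sub‑Gaussian version of the main theorem carries $p/n$ instead of $\rSigma/n$.
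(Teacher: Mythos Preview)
Your argument follows precisely the route the paper indicates: reuse the triangle-inequality decomposition \eqref{triangle}--\eqref{sigma} from the proof of \Cref{lem:sigmaTr2}, then replace the Gaussian covariance-concentration input by \Cref{lem:subGopnorm} and run the same peeling/union-bound device with $t_s=s\log(2ne/s)+t$. This is exactly what the paper's one-line proof asks for; your explicit whitening step and the bookkeeping for the cross terms $\sqrt{pm}$, $\sqrt{tm}$, $m\sqrt{\log(2ne/m)}$ are correct, and the only slightly informal point---absorbing $\sqrt{nt}$ into $\sqrt{np}$ ``in the regime of interest''---is a technicality the paper's sketch equally leaves implicit.
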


\begin{proof}
    The proof of this lemma is similar to the proof of
    \Cref{lem:sigmaTr2} with the only difference that
    instead of Theorems 4 and 5 from \citep{KoltLoun}
    we now use  \Cref{lem:subGopnorm}.
\end{proof}

\begin{lemma}\label{lem:mean_subG}
For any positive integer $m\le n$ and any $t>0$,
with probability at least $1 - e^{-t}$, we have
\begin{align}
    \max_{|S|\ge n-m}\Big\|\frac1{|S|}\sum_{i\in S}
    \Proj\,\bxi_i\Big\|_2 \leq \frac{n\|\Proj\bar\bxi_n
    \|_2}{n-m}  +  \frac{\sqrt{m{\mathfrak s}}(4\sqrt{p}+2\sqrt{2t}) +
    2m\sqrt{2{\mathfrak s}\log(2ne/m)}}{n-m}.
\end{align}
\end{lemma}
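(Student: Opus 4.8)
The plan is to transcribe the proof of \Cref{lem:mean}, replacing the Gaussian step (in which $\|\sum_{i\in J}\bxi_i\|_2^2$ was recognised as a weighted $\chi^2$ and bounded via \cite[Lemma 8]{Comminges} by a quantity involving the effective rank $\rSigma$) by the sub-Gaussian deviation bound of \Cref{lem:11}; this substitution is precisely why the effective rank $\rSigma$ gets replaced by the ambient dimension $p$ in the conclusion. The first step is the deterministic reduction: since $|S|\ge n-m$ entails $|S^c|\le m$, and $\Proj$ is an orthogonal projection, writing $\sum_{i\in S}\Proj\bxi_i=\sum_{i=1}^n\Proj\bxi_i-\sum_{i\in S^c}\Proj\bxi_i$ and applying the triangle inequality twice gives, exactly as in the proof of \Cref{lem:mean},
\[
    \max_{|S|\ge n-m}\Big\|\frac1{|S|}\sum_{i\in S}\Proj\bxi_i\Big\|_2
    \le \frac{n\|\Proj\bar\bxi_n\|_2}{n-m}
    + \frac1{n-m}\max_{|J|\le m}\Big\|\sum_{i\in J}\bxi_i\Big\|_2 .
\]
It then suffices to show that $\max_{|J|\le m}\|\sum_{i\in J}\bxi_i\|_2\le \sqrt{m{\mathfrak s}}\,(4\sqrt p+2\sqrt{2t})+2m\sqrt{2{\mathfrak s}\log(2ne/m)}$ with probability at least $1-e^{-t}$.

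I would obtain this by a union bound over all subsets $J\subset[n]$ with $|J|\le m$. For a fixed $J$ of cardinality $s$, \Cref{lem:11} applied with confidence $e^{-u_s}$ yields $\|\sum_{i\in J}\bxi_i\|_2\le 4\sqrt{ps{\mathfrak s}}+\sqrt{8s{\mathfrak s}\,u_s}$ with probability at least $1-e^{-u_s}$, valid for any $u_s>0$. Following the device used in \Cref{lem:mean}, I would set $u_s=s\log(2ne/s)+t$, so that $\binom{n}{s}e^{-u_s}\le (ne/s)^s e^{-u_s}=2^{-s}e^{-t}$. Summing over $s=1,\dots,m$ and over the at most $\binom{n}{s}$ sets $J$ of that size, the total failure probability is at most $\sum_{s\ge1}2^{-s}e^{-t}<e^{-t}$. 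On the complementary event, for every $J$ with $|J|=s\le m$ one has $\|\sum_{i\in J}\bxi_i\|_2\le 4\sqrt{ps{\mathfrak s}}+\sqrt{8s{\mathfrak s}\,u_s}\le 4\sqrt{pm{\mathfrak s}}+\sqrt{8m{\mathfrak s}\,u_m}$; here I use that $s\mapsto s\,u_s=s^2\log(2ne/s)+st$ is nondecreasing on $[1,m]$ whenever $m\le n$, since the derivative of $s^2\log(2ne/s)$ equals $s\big(2\log(2ne/s)-1\big)$ and $2ne/s\ge 2e$ forces $2\log(2ne/s)-1>0$.

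It remains to clean up the constants: using $\sqrt{a+b}\le\sqrt a+\sqrt b$,
\[
    \sqrt{8m{\mathfrak s}\,u_m}=\sqrt{8m{\mathfrak s}\big(m\log(2ne/m)+t\big)}
    \le 2m\sqrt{2{\mathfrak s}\log(2ne/m)}+2\sqrt{2t}\,\sqrt{m{\mathfrak s}},
\]
so that $\max_{|J|\le m}\|\sum_{i\in J}\bxi_i\|_2\le \sqrt{m{\mathfrak s}}\,(4\sqrt p+2\sqrt{2t})+2m\sqrt{2{\mathfrak s}\log(2ne/m)}$. Dividing by $n-m$ and combining with the deterministic reduction from the first paragraph gives exactly the stated inequality. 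I do not expect a genuine obstacle here: the argument is a routine adaptation of \Cref{lem:mean}, and the only points needing a modicum of care are the bookkeeping of the auxiliary exponents $u_s$ and the monotonicity in $s$ just used, which together ensure the union bound closes with total failure probability strictly below $e^{-t}$.
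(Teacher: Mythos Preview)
Your proposal is correct and follows essentially the same approach as the paper: the deterministic reduction via $S^c$ is identical to that in \Cref{lem:mean}, and the probabilistic control of $\max_{|J|\le m}\|\sum_{i\in J}\bxi_i\|_2$ proceeds by the same union bound over subset sizes, replacing the Gaussian tail (\cite[Lemma 8]{Comminges}) by \Cref{lem:11}. Your bookkeeping of the auxiliary exponents $u_s=s\log(2ne/s)+t$ and the explicit monotonicity check for $s\mapsto s\,u_s$ are in fact cleaner than the paper's own write-up.
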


\begin{proof}
The proof of this theorem is similar to that of \Cref{lem:mean}, with the only exception that now we need to bound the maximum of a norm of a sum of at most $m$ sub-Gaussian vectors, where the maximum is taken over all subsets of $[n]$ of size at most $m$. Since, each sub-Gaussian vector has a variance proxy $\mathfrak{s}$ then using \Cref{lem:11} along with union bound, we have
\begin{align}
    \bfP\Big(\max_{|J|\leq m} \Big\|\sum_{i \in J}
    \bxi_i \Big\|_2 \geq \sqrt{m\mathfrak{s}}(4\sqrt{p} + t_m)\Big)
    &\leq \sum_{l=1}^{m} \binom{n}{l} \bfP\Big( \Big\|
    \sum_{i=1}^l \bxi_i\Big\|_2 \geq \sqrt{m\mathfrak{s}}(4\sqrt{p} + t_m)\Big)\\
    &\leq \sum_{l=1}^{m} \Big(\frac{ne}{l}\Big)^l \bfP\Big(
    \Big\|\sum_{i=1}^l \bxi_i\Big\|_2 \geq \sqrt{l\mathfrak{s}}(4\sqrt{p} + t_l)
    \Big)\\
    &\leq \sum_{l=1}^{m} \Big(\frac{ne}{l}\Big)^l e^{-t_l/3}
    \leq e^{-t}.
\end{align}

Therefore, we obtain that with probability at least $1-e^{-t}$ the inequality
\begin{align}
    \max_{|J|\le m} \Big\| \sum_{i\in J}
    \bxi_i \Big\|_2 \le \sqrt{m{\mathfrak s}}(4\sqrt{p}+2\sqrt{2t}) +
    2m\sqrt{2{\mathfrak s}\log(2ne/m)}
\end{align}
holds. Then, combining with
\begin{align}
    \frac{1}{|S|}\Big\| \sum_{i\in S}\Proj\bxi_i\Big\|_2 \le \frac{n\|\Proj\bar\bxi_n\|_2}{n - m} +
    \frac{1}{n-m}\max_{|J|\le m} \Big\| \sum_{i\in J}
    \bxi_i \Big\|_2.
\end{align}
yields the desired result.

\end{proof}

\subsection{Proof of \Cref{thm:subG}}
All the ingredients provided, we can now compile the complete proof of \Cref{thm:subG}.

Taking $\bfU_L:=\bfV_L$, the algorithm detailed in \eqref{alg:3} returns $\bmuSDR=\sum_{\ell=0}^L\hat\bmu^{(\ell)}$ with $\hat\bmu^{(\ell)}\in\mathscr U_\ell={\rm Im}(\bfV_\ell\bfU_\ell^\top)$ for every $\ell\in\{0,\ldots,L\}$ where the two-by-two orthogonal subspaces $\mathscr U_0,\ldots,\mathscr U_L$ span the whole space $\mathbb{R}^p$.
This allows us to decompose the risk:
\begin{align}
    \big\|\bmuSDR - \bmu^*\big\|_2^2 &
    %= \sum_{\ell=0}^L
    %\big\|\hat\bmu^{(\ell)} - \Proj_{\mathscr U_\ell}
    %\bmu^* \big\|_2^2
    = \sum_{\ell=0}^L \big\|
    \Proj_{\mathscr U_\ell}(\barx_{\ell} - \bmu^*)
    \big\|_2^2
    =\sum_{\ell=0}^L\big\| \Proj_\ell (\bfV_{\ell}^\top
    \barx_{\ell}  - \bfV_{\ell}^\top\bmu^*)\big\|_2^2,
\end{align}
where $\Proj_\ell:=\bfU_{\ell}^\top\bfU_\ell$ is the projection matrix projecting onto the subspace of $\mathbb{R}^{p_{\ell}}$ spanned by the bottom $p_\ell - {p_{\ell+1}}$ eigenvectors of $\bfV_\ell^\top(\hat\bSigma{}^{(\ell)} - \bSigma)\bfV_\ell$ for $\ell=0,\ldots,L$ with the convention that $p_{L+1}=0$.

For $\ell\in\{0,\ldots,L-1\}$, we intend to apply
\Cref{prop:projectedmean} to $\bZ_i = \bfV_{\ell}^\top\bX_i$
and $\bmu^Z = \bfV_{\ell}^\top\bmu^*$ in order to upper bound
the error term $\Err_\ell := \|\Proj_\ell(\bfV_{\ell}^\top
\barx_{\ell}  - \bfV_{\ell}^\top \bmu^* )\big\|_2$. Using
the inequalities
\begin{align}
    \|\bfV^\top
    (\hat\bSigma{}^{(\ell)} - \bSigma)\bfV\|_{\rm op} \le
    \|\hat\bSigma{}^{(\ell)} - \bSigma\|_{\rm op},
    \quad
    \lambda_{p_\ell} (\bfV^\top \bSigma \bfV)
    \le \lambda_{p} (\bSigma),
    \quad
    \lambda_{1} (\bfV^\top \bSigma \bfV)
    \ge \lambda_{1} (\bSigma)
\end{align}
that are true for every orthogonal matrix $\bV$,
and keeping in mind the definition of $\Proj_\ell$,
we get
\begin{align}
    \Err_\ell &\le
    \bigg\{2\omega_\calO \|
    \hat\bSigma{}^{(\ell)} - \bSigma\|_{\rm op}
    + \frac{\omega_\calO^2}{1-\omega_\calO}
    \bigg((\lambda_{p}-\lambda_1)(\bSigma) + \frac{\delta_\ell^2}{p_{\ell+1}}\bigg)
    \bigg\}^{1/2} + \big\|   \Proj_\ell\bfV_\ell^\top
    \barxi_{\calS^{(\ell)}_\calI }\big\|_2,
\end{align}
where we have used the notation
\begin{align}
    \omega_{\calO} = \max_{\ell} \frac{|\calS^{(\ell)}\cap\calO|}
    {|\calS^{(\ell)}|},\qquad
    \bxi_i = \bX_i -\bmu^*
\end{align}
and
$\delta_\ell = \inf_{\bmu}
\max_{i\in\calS^{(\ell)}} \|\bfV_\ell^\top (\bX_i -
\bmu)\|_2$. Note that when $\calO$ and $(\calS^{(\ell)
}_\calI)^c$ are of cardinality less than $n\varepsilon$
and $n(\varepsilon + \tau)$, respectively, we have
$\omega_{\calO} \le {\varepsilon}/{(1- \tau)}$ and
$\frac{\omega_{\calO}}{1-\omega_{\calO}}
    \le \frac{\varepsilon}{1- \varepsilon - \tau}$.

We set $\eta := \varepsilon + \tau\le 3/4$ and apply
\Cref{lem:filternoncontamine_subG} to infer that $\omega_{\calO} \leq
\varepsilon/(1 - \eta)\le 4\varepsilon$ is true with probability
at least $1-\delta$. Furthermore, we know that $\delta_\ell\leq\max_{i\in\calS^{(\ell)}}\|\bfV_\ell^\top\bX_i - \bar\bmu^{(\ell)}\|_2\le t\sqrt{p_\ell}$. This yields
\begin{align}
    \Err_\ell &\leq \Big\{8\varepsilon
    \|\hat\bSigma{}^{(\ell)} - \bSigma\|_{\rm op}
    + 16\varepsilon^2 \Big((\lambda_{p}-\lambda_1) (\bSigma)
    + \frac{t^2p_{\ell}}{p_{\ell+1}}\Big)\Big\}^{1/2}
    + \big\|\Proj_{\mathscr U_\ell}\barxi_{\calS^{(\ell)
    }_\calI}\big\|_2.
\end{align}
Let us introduce the shorthand
$T_1 =  \max_{\ell\in[L]}
    \|\hat\bSigma{}^{(\ell)} - \bSigma\|_{\rm op}
    + \varepsilon (\lambda_{p}-\lambda_1) (\bSigma)$.
This leads to
\begin{align}\label{Err:ll}
    \Err_\ell &\leq \Big\{8\varepsilon T_1 +
    \frac{16\varepsilon^2 t^2p_{\ell}}{p_{\ell+1}}
    \Big\}^{1/2} + \| \Proj_{\mathscr U_\ell}
    \barxi_{\calS^{(\ell) }_\calI}\big\|_2.
\end{align}
For the last error term, since $p_L=1$ then, by the combination of \Cref{lem:geommed_subG}
and \Cref{lem:filternoncontamine_subG}, we have
\begin{align}
  \Err_L&\le \big\|\Proj_{\mathscr U_L} \bar\bxi_{\calS_\calI^{(L)}} \big\|_2 + \frac{n
    \varepsilon (t\sqrt{p_L} + \big\|\Proj_{\mathscr U_L}\bmu^* - \med_{\mathscr U_L}\big\|_2)}{
    |\mathcal S^{(L)}|}\\
    &\le \big\|\Proj_{\mathscr U_L} \bar\bxi_{\calS_\calI^{(L)}} \big\|_2 + \frac{
    \varepsilon t + \varepsilon\big\|\Proj_{\mathscr U_L}\bmu^* - \med_{\mathscr U_L}\big\|_2}{1-\eta}\\
    &\le \big\|\Proj_{\mathscr U_L} \bar\bxi_{\calS_\calI^{(L)}} \big\|_2 +
    4\varepsilon t + 4\varepsilon\big\|\Proj_{\mathscr U_L}\bmu^* -\med_{\mathscr U_L}\big\|_2.\label{Err:LL}
\end{align}
Combining \eqref{Err:ll}, \eqref{Err:LL}, inequality
$p_\ell\le e p_{\ell+1}$, as well as
the Minkowski inequality, we get
\begin{align}
    \big\|\bmu^* &- \bmuSDR\big\|_2  = \bigg\{\sum_{\ell=0}^L
    \Err_\ell^2\bigg\}^{1/2}\\
    &\le \bigg\{ 8\varepsilon L ( T_1 + e \varepsilon
    t^2) + 16\varepsilon^2\big(t + \big\|\Proj_{\mathscr
    U_L}\bmu^* -\med_{\mathscr U_L}\big\|_2\big)^2\bigg\}^{1/2}
    + \bigg\{\sum_{\ell=0}^L \| \Proj_{\mathscr U_\ell}
    \barxi_{\calS^{(\ell) }_\calI}\big\|_2^2\bigg\}^{1/2}\\
    &\le 2\sqrt{2\varepsilon L T_1} + 9\varepsilon t
    \sqrt{L} + 4\varepsilon \big\|\Proj_{\mathscr
    U_L}\bmu^* -\med_{\mathscr U_L}\big\|_2 +
    \bigg\{\sum_{\ell=0}^L \| \Proj_{\mathscr U_\ell}
    \barxi_{\calS^{(\ell) }_\calI}\big\|_2^2\bigg\}^{1/2}\!\!
    \!\!.
    \label{eq:14c}
\end{align}
To ease notation, %for some $\delta\in(0,1)$,
let us set
\begin{align}
    {\sf r}_{n, \mathfrak{s}} = \frac{4\sqrt{\mathfrak{s}}\big(\sqrt{p}+2\sqrt{\log(2/\delta)}\big)}{\sqrt{n}}.
\end{align}

In view of \Cref{lem:mean_subG}, with
probability at least $1-\delta$, we have
\begin{align}
    \bigg\{\sum_{\ell=0}^L \| \Proj_{\mathscr U_\ell}
    \barxi_{\calS^{(\ell) }_\calI}\big\|_2^2\bigg\}^{1/2}
    &\le
    \bigg\{\sum_{\ell=0}^L \bigg( \frac{\|\Proj_{\mathscr U_\ell}\bar\bxi_n
    \|_2}{1 - \eta}  +  \frac{{\sf r}_{n, \mathfrak{s}}\sqrt{\eta} + 2\eta\sqrt{2 \log(2e/\eta)}}{1-\eta}\bigg)^2\bigg\}^{1/2}\\
    &\le \bigg\{\sum_{\ell=0}^L \big( 4\|\Proj_{\mathscr U_\ell}
    \bar\bxi_n \|_2  +  4{\sf r}_{n, \mathfrak{s}}\sqrt{\eta} + 10\eta\sqrt{
    \log(2/\eta)}\big)^2\bigg\}^{1/2}\\
    &\le 4\|\barxi_n \|_2 + 4{\sf r}_{n, \mathfrak{s}}\sqrt{\eta L} +
    10\eta\sqrt{L\log(2/\eta)}.
\end{align}
Moreover, since the random variable $\barxi_n$ is sub-Gaussian with variance proxy $\mathfrak{s}/n$, hence by \Cref{lem:sigma_p} we have
\begin{align}
    \|\barxi_n\|_2^2\le\frac{16\mathfrak{s}\big(\sqrt{p}+2\sqrt{\log(2/\delta)}\big)^2}{n}
    ={\sf r}_{n, \mathfrak{s}}^2
\end{align}
with probability at least $1-\delta$. Therefore, with
probability at least $1-2\delta$,
\begin{align}
    \bigg(\sum_{\ell=0}^L \| \Proj_{\mathscr U_\ell}
    \barxi_{\calS^{(\ell) }_\calI}\big\|_2^2\bigg)^{1/2}
    &\le  4{\sf r}_{n, \mathfrak{s}} \big(1 + \sqrt{L\eta}\big) + 10\eta
    \sqrt{L\log(2/\eta)}.\label{eq:14bisc}
\end{align}

Next, the combination of \Cref{lem:geommed_subG} and \Cref{lem:filternoncontamine_subG} and the fact that
$p_L = \dim(\mathscr U_L) = 1$  imply that with probability at least $1-\delta$
\begin{align}
    \big\|\Proj_{\mathscr
    U_L}\bmu^* -\med_{\mathscr U_L}\big\|_2 \le
    \frac{2(1+ \sfC{\sf r}_{n, \mathfrak{s}} \sqrt{\mathfrak{s}}/4)}{1-2\varepsilon},\label{eq:15c}
\end{align}
where $\sfC$ is the same universal constant as in \Cref{lem:vershynin:bis}. Recall that we have chosen $t$ in such a way that
\begin{align}
    t \le \frac{3(1 + \sfC_0{\sf r}_{n, \mathfrak{s}} \sqrt{\mathfrak{s}}/4\sqrt{\tau})}{1-2\varepsilon^*}
        +  1.6\sfC_0{\mathfrak{s}}\sqrt{\log(2/{{\tau}})}.\label{eq:16c}
\end{align}
Combining \eqref{eq:14c}, \eqref{eq:14bisc}, \eqref{eq:15c} and \eqref{eq:16c},
we arrive at the inequality
\begin{align}
    \big\|\bmuSDR - \bmu^*\big\|_2
    &\le 2\sqrt{2\varepsilon L T_1} + 9\varepsilon t
    \sqrt{L} +  \frac{8\varepsilon(1+\sfC{\sf r}_{n, \mathfrak{s}} \sqrt{\mathfrak{s}}/4)}{1-2\varepsilon}
    + 4{\sf r}_{n, \mathfrak{s}} \big(1 + \sqrt{L\eta}\big) + 10\eta
    \sqrt{L\log(2/\eta)}\\
    &\le 2\sqrt{2\varepsilon L T_1} +
    \frac{27\varepsilon \sqrt{L}(1 + \sfC{\sf r}_{n, \mathfrak{s}} \sqrt{\mathfrak{s}}/4\sqrt{\tau})}{1 -
    2\varepsilon^*} + 14.4\sfC{\mathfrak{s}}\varepsilon \sqrt{L\log(2/\tau)} \\
    &\quad +  \frac{8\varepsilon(1+\sfC{\sf r}_{n, \mathfrak{s}} \sqrt{\mathfrak{s}}/4)}{1-2\varepsilon}
    + 4{\sf r}_{n, \mathfrak{s}} \big(1 + \sqrt{L\eta}\big) + 10\eta
    \sqrt{L\log(2/\eta)}
\end{align}
that holds with probability at least $1-3\delta$. In the
upper bound obtained above, only the term $T_1$ remains
random. We can upper bound this term using
\Cref{lem:sigma_subG}. It implies that with probability
at least $1-2\delta$, we have
\begin{align}
    T_1 &\le  A\frac{\sqrt{np} + p + n\eta
    \log(2e/\eta) + 2\log(1/\delta)}{n(1-\eta)} + \big(
    4{\sf r}_{n, \mathfrak{s}}(1+\sqrt{\eta}) + 10\eta\sqrt{\log(2/\eta)}
    \big)^2 + \varepsilon\\
    &\le A_{\mathfrak{s}}\big({\sf r}_{n, \mathfrak{s}} + {\sf r}_{n, \mathfrak{s}}^2  + 8\eta
    \log(2/\eta)\big) + \big (7.5{\sf r}_{n, \mathfrak{s}} + 10\eta
    \sqrt{\log(2/\eta)}\big)^2 + \varepsilon,
\end{align}
where $A_{\mathfrak{s}}$ is a constant depending only on the variance proxy $\mathfrak{s}$, the value of which is not necessarily the same in further simplifications of the expression from the last display. Then, using the triangle inequality several times we arrive at the following expression
\begin{align}
    \sqrt{\varepsilon T_1} &\le  \big\{A_{\mathfrak{s}}\varepsilon
    \big({\sf r}_{n, \mathfrak{s}} + {\sf r}_{n, \mathfrak{s}}^2  + 8\eta \log
    (2/\eta) \big)\big\}^{1/2} + \big (7.5{\sf r}_n +
    10\eta \sqrt{\log(2/\eta)}\big)\sqrt{\varepsilon} +
    \varepsilon\\
    & \le (A_{\mathfrak{s}} + \sqrt{A_{\mathfrak{s}}/2} + 5.4){\sf r}_{n, {\mathfrak{s}}}+ (7.1 + 2\sqrt{2A_{\mathfrak{s}}})
    \tau \sqrt{\log(2/\tau)} + (9.1 + 2\sqrt{2A_{\mathfrak{s}}})\varepsilon\sqrt{
    \log(2/\varepsilon)}.
\end{align}
These inequalities imply that there is a universal
constant $\sfC$ such that
\begin{align}\label{eq:18c}
    \big\|\bmuSDR - \bmu^*\big\|_2
    & \le \frac{\sfC \mathfrak{s} \big( A_{\mathfrak{s}}{\sf r}_{n, \mathfrak{s}}/\sqrt{\mathfrak{s}} + \tau\sqrt{\log(2/\tau)}
    + \varepsilon\sqrt{\log(2/\varepsilon)} + {\sf r}_{n, \mathfrak{s}}
    \varepsilon/\sqrt{\mathfrak{s}\tau} \big) \sqrt{L}}{1-2\varepsilon^*}.
\end{align}
Let us denote $\log_+(x) = \max\{0, \log(x)\}$ the positive part of logarithm, then we choose
\begin{align}
    \tau = \frac14\bigwedge \frac{\bar{\sf r}_{n,\mathfrak{s}}}{\sqrt{\log_+(2/\bar{\sf r}_{n,\mathfrak{s}})}}, \qquad\text{with}\qquad
    \bar{\sf r}_{n,\mathfrak{s}} = \frac{3\sqrt{\mathfrak{s}}\big(\sqrt{p} + 2\sqrt{\log(2/\delta)}\big)}{\sqrt{n}}.
\end{align}
Note that ${\sf r}_{n,\mathfrak{s}} \le \sqrt{2} \bar{\sf r}_{n,\mathfrak{s}}$ and,
furthermore, $\tau = 1/4$ whenever
$\bar{\sf r}_{n,\mathfrak{s}} \ge 1/2$. Therefore,
${\sf r}_{n,\mathfrak{s}}  \varepsilon/\sqrt{\tau}\le {\sf r}_{n,\mathfrak{s}} + \varepsilon$.
Inserting this value of $\tau$ in \eqref{eq:18c} leads to
\begin{align}
    \big\|\bmuSDR - \bmu^*\big\|_2
    &\le \frac{\sfC \, {\mathfrak{s}} \big( A_{\mathfrak{s}}{\sf r}_{
    n, \mathfrak{s}}/\sqrt{\mathfrak{s}} + \varepsilon\sqrt{\log(2/
    \varepsilon)}\big) \sqrt{L}}{1-2\varepsilon^*}.
\end{align}
where $\sfC$ is a universal constant. Replacing ${\sf r}_{n, \mathfrak{s}}$
by its expression, and upper bounding $L$ by $2\log p$,  we arrive at
\begin{align}\label{eq:17}
    \big\|\bmuSDR - \bmu^*\big\|_2
    &\le \frac{\sfC \, {\mathfrak{s}}\, \sqrt{\log p}}{1-2\varepsilon^*}
    \bigg( A_{\mathfrak{s}}\sqrt{\frac{p}{n}}+ \varepsilon\sqrt{
    \log(2/\varepsilon)} + A_{\mathfrak{s}}\sqrt{\frac{\log(1/\delta)}{n}}
    \bigg).
\end{align}
Note that this inequality holds true on an event of probability
at least $1-5\delta$.

%%%%%%%%%%%%%%%%%%%%%%%%%%%%%%%%%%%%%%%%%%%%%%
%% Funding information, if any,             %%
%% should be provided in the                %%
%% funding section.                         %%
%%%%%%%%%%%%%%%%%%%%%%%%%%%%%%%%%%%%%%%%%%%%%%
\begin{funding}
The authors were supported by the grant Investissements
d’Avenir (ANR-11-IDEX-0003/Labex Ecodec/ANR-11-LABX-0047) and
by the FAST Advance grant. The third author was supported in
part by the center Hi!PARIS.
\end{funding}

%\newpage

\appendix

{
\renewcommand{\addtocontents}[2]{}
\bibliography{LiteratureRobust}
}

\end{document}